\documentclass[11pt]{amsart}

\usepackage{amscd,amssymb,amsopn,amsmath,amsthm,mathrsfs,graphics,amsfonts,enumerate,verbatim,calc
}
\usepackage{bbm}
\usepackage[all,cmtip]{xy}
\usepackage[all]{xy}
\usepackage{tikz}
\usepackage{lscape}
\usepackage{enumitem}
\usetikzlibrary{matrix,arrows,decorations.pathmorphing,cd}
\usepackage{hyperref}
\hypersetup{colorlinks=true,linkcolor={blue}}
\usepackage{cleveref}
\usepackage{url}

\usepackage{scalerel}
\usepackage{stackengine,wasysym}
\usetikzlibrary {positioning}
\usetikzlibrary{patterns}
\usetikzlibrary{calc}
\definecolor {processblue}{cmyk}{0.96,0,0,0}

\usepackage{subfigure}

\usepackage{comment} %% To comment out large blocks of text.

\usepackage{ dsfont }

\usepackage{color}

\usepackage[OT2,OT1]{fontenc}
\newcommand\cyr{%
\renewcommand\rmdefault{wncyr}%
\renewcommand\sfdefault{wncyss}%
\renewcommand\encodingdefault{OT2}%
\normalfont
\selectfont}
\DeclareTextFontCommand{\textcyr}{\cyr}

\usepackage{amssymb,amsmath}

\DeclareFontFamily{OT1}{rsfs}{}
\DeclareFontShape{OT1}{rsfs}{n}{it}{<-> rsfs10}{}
\DeclareMathAlphabet{\mathscr}{OT1}{rsfs}{n}{it}

\numberwithin{equation}{section}
\newtheorem{theorem}{Theorem}[section]
\newtheorem{lem}[theorem]{Lemma}
\newtheorem{cor}[theorem]{Corollary}

\newtheorem{prop}[theorem]{Proposition}

\theoremstyle{definition}
\newtheorem{defn}[theorem]{Definition}
\theoremstyle{remark}
\newtheorem{remark}[theorem]{Remark}

\newtheorem{example}[theorem]{Example}

\newcommand{\im}{\operatorname{im}}
\renewcommand{\ker}{\operatorname{ker}}
\newcommand{\tr}{\operatorname{tr}}

\newcommand{\Ext}{\operatorname{Ext}}

\newcommand{\Hom}{\operatorname{Hom}}

\newcommand{\End}{\operatorname{End}}

\newcommand{\coker}{\operatorname{coker}}

\newcommand{\rank}{\ensuremath{\operatorname{rank}}}
\renewcommand{\mod}{\operatorname{mod}}

\newcommand{\Tr}{\operatorname{Tr}}

\newcommand{\m}{\mathfrak{m}}

\newcommand{\w}{\omega}

\renewcommand{\bar}{\overline}

\title[Trace Ideals of Syzygies]{Trace Ideals of Syzygies}

\author[Lindo]{Haydee Lindo}
\email[Haydee Lindo]{hlindo@hmc.edu}
\address{Department of Mathematics \\
320 E. Foothill Blvd. \\
Claremont, CA 91711}

\author[Lyle]{Justin Lyle}
\email[Justin Lyle]{jll0107@auburn.edu}
\urladdr{https://jlyle42.github.io/justinlyle/}
\address{Department of Mathematics and Statistics \\ 305 W Samford Avenue \\
Auburn University, AL 36849}

\author[Maitra]{Sarasij Maitra}
\email[Sarasij Maitra]{sarasij.maitra@villanova.edu}
\urladdr{https://sarasij93.github.io}
\address{Department of Mathematics and Statistics \\ 800 E. Lancaster Avenue \\ Villanova, PA 19085}

\begin{document}

\keywords{trace ideal, numerical semigroup ring, conductor ideal, syzygy, minimal multiplicity}
\subjclass[2020]{Primary 13C13; Secondary 13D02.}

\begin{abstract} 
We study the behavior of trace ideals under taking syzygies. In particular, when $R$ is numerical semigroup ring and $I$ is a homogeneous ideal in $R$, we obtain an upper estimate for $\tr_R(\Omega^1_R(I))$, and we show this estimate is sharp when $I$ is the conductor ideal $\mathfrak{c}_R$ of $R$. Using this result, we characterize the numerical semigroup rings for which $\tr_R(M) \subseteq \tr_R(\Omega^1_R(M))$ holds for every finitely generated $R$-module $M$. In a similar vein, we characterize numerical semigroup rings for which $\tr_R(\Omega^1_R(\mathfrak{c}_R))=\mathfrak{c}_R$.
\end{abstract} 

\maketitle

\section{Introduction}

Let $(R,\m,k)$ be a commutative Noetherian local ring and let $M$ be a finitely generated $R$-module. Then the ideal $\tr_R(M):=\langle f(x) \mid f \in \Hom_R(M,R), x \in M \rangle$ is known as the \emph{trace ideal} of $M$.  
Trace ideals, and trace submodules more generally, were studied classically as images of (bilinear) pairings, either being alluded to or utilized directly in work of Curtis \cite{Cu56}, Morita \cite{Mo58}, and Auslander-Goldman \cite{AG60}, among many others, and later appearing in work of Vasconcelos, M{\"u}ller, and Bass, to detect free summands in the study of reflexive modules \cite{Va68}, and to understand equivalences between  certain module categories \cite{Muller:1974, Bass:1968a}.

Trace ideals have experienced a modern resurgence as more is discovered about the ways in which the structure of $\tr_R(M)$, as an ideal, captures information about the module $M$. In recent years, trace ideals have been used to describe centers of endomorphisms rings, to provide partial progress on longstanding conjectures from the representation theory of Artin algebras, and to codify the failure of the Gorenstein property, to name but a few applications \cite{lindo2017trace,Li22,HH19}. However, despite the renewed attention they have received, numerous fundamental aspects of trace ideals remain poorly understood. 

In this work, we focus on one such aspect: the behaviour of trace ideals  
under taking syzygies. On the one hand, examples exist to show that $\tr_R(M)$ may be much larger, much smaller, or seemingly unrelated to $\tr_R(\Omega^1_R(M))$ (see Examples \ref{incomparable} and \ref{minmultexample}). On the other hand, under certain hypotheses their behavior is tightly connected. For instance, the main results in \cite{BD25} show, if $R$ is a complete intersection or is Golod, that the sequence $\{\tr_R(\Omega^i_R(M))\}$ becomes periodic for $i \gg 0$. However, they also give examples to show that this behavior need not occur for more general $R$. Celikbas-Celikbas-Herzog-Kumashiro show that if $(R,\m,k)$ is a local ring, then $\m \subseteq \Omega^i_R(k)$ for all $i \ge 1$ unless $R$ is either regular or a principal ideal ring with $\m^2 \ne 0$, while it follows from \cite[Theorem 1.2]{lyle2024annihilators} that $\tr(I) \subseteq \tr(\Omega^1_R(I))$ whenever $R$ is a numerical semigroup ring of minimal multiplicity and $\tr_R(I)=\m$. Recent work of Eisenbud-Ficarra-Herzog-Moradi compares the trace of an ideal $I$ with its Fitting ideals, and in particular shows that $\tr_R(\w_R)$ and $I_1(\w_R)$ have the same radical where $\w_R$ is the canonical module of a Cohen-Macaulay local ring $R$ \cite{EF25}.

Of key significance for our work is the following which can be seen from combining \cite[Proposition 4.1]{DD23} and \cite[Proposition 2.5]{CD25}: 

\begin{prop}\label{introthm1}
Suppose $R$ is a local ring and that $I$ is an ideal of positive grade in $R$. Then $\tr_R(I) \subseteq  \tr_R(\Omega^1_R(I))$.
\end{prop}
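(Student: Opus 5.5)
The plan is to work with an explicit minimal presentation. Fix minimal generators $x_1,\dots,x_n$ of $I$ and the exact sequence $0\to \Omega^1_R(I)\to R^n \xrightarrow{\pi} I\to 0$ with $\pi(e_i)=x_i$, so that $\Omega:=\Omega^1_R(I)=\{r\in R^n \mid \sum r_ix_i=0\}$.

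\textbf{Step 1: the dual of $I$.} Since $I$ has positive grade, it contains a nonzerodivisor $a$. Hence $\Hom_R(I,R)$ is identified with $I^{-1}=\{q\in Q \mid qI\subseteq R\}$, where $Q$ is the total ring of fractions, and so $\tr_R(I)=I\cdot I^{-1}$. It therefore suffices to show that $qx_j\in\tr_R(\Omega)$ for every $q\in I^{-1}$ and every $j$.

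\textbf{Step 2: the dual of $\Omega$.} Localizing at the nonzerodivisor $a$ makes $I_a=R_a$ free. So $\Omega\otimes Q$ is a direct summand of $Q^n$, and every homomorphism $\Omega\to R$ is the restriction of a $Q$-linear form $r\mapsto\sum r_iu_i$ with $u\in Q^n$. This form is determined only modulo $Q\cdot(x_1,\dots,x_n)$, since such multiples vanish on $\Omega$. Thus $\Hom_R(\Omega,R)$ consists of those $u\in Q^n$ with $\sum r_iu_i\in R$ for all $r\in\Omega$, taken modulo these multiples. The Koszul relations $x_ke_j-x_je_k$ lie in $\Omega$, and a form $u$ sends such a relation to $x_ku_j-x_ju_k$.

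\textbf{Step 3: the key construction.} Given $q\in I^{-1}$ and an index $j$, I would look for $u\in Q^n$ of the shape
\[
u=q e_j+(\text{a correction in }Q^n)
\]
such that two things hold. First, $u$ takes values in $R$ on all of $\Omega$. Second, $u$ sends the Koszul relation $x_ke_j-x_je_k$ to $qx_k$ up to an element already known to lie in $\tr_R(\Omega)$. The naive choice $u=qe_j$ is $R$-valued exactly when $q\,(J_j:_R x_j)\subseteq R$, where $J_j=(x_i \mid i\neq j)$. This can fail, so the correction term must be built from $a$. Writing $a=\sum c_ix_i$, I would try corrections of the form $-\tfrac{q}{a}\,x_j\,(c_1,\dots,c_n)$, possibly after first replacing $I$ by an isomorphic ideal. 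Such a replacement is harmless because neither $\tr_R(I)$ nor the isomorphism class of $\Omega^1_R(I)$ changes under $I\cong sI$ for a nonzerodivisor $s\in Q$. The aim is to arrange that $a$ itself can be taken as a minimal generator; passing to $R[t]_{\m R[t]}$ if the residue field is finite would allow a general choice via prime avoidance. Once $x_1=a$ is a nonzerodivisor, the candidate is $u=\tfrac{q}{a}(x_1e_j-x_je_1)$. For $r\in\Omega$ it gives $\tfrac{q}{a}(ar_j-x_jr_1)$, and one must verify that this lies in $R$ using $\sum r_ix_i=0$ and $qI\subseteq R$. Evaluating it on a suitable relation should then produce $qx_j$, possibly modulo $I\cdot\tr_R(I)$. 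In that case a Nakayama-type argument would finish, because the image of $\Omega$ under the evaluation of all such $u$ would be an ideal $T\subseteq\tr_R(I)$ with $\tr_R(I)\subseteq T+\m\,\tr_R(I)$.

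The main obstacle is Step 3: producing, for each $q\in I^{-1}$, a homomorphism $\Omega\to R$ whose image genuinely contains $qx_j$ and not merely $I\cdot qx_j$. As a fallback, I would dualize the presentation to get the exact sequence
\[
0\to I^*\to (R^n)^*\to\Omega^*\to\Ext^1_R(I,R)\to 0
\]
and use the connecting map together with $\Ext^1_R(I,R)\cong\Ext^2_R(R/I,R)$ to lift classes coming from $I^{-1}$. This would be the same computation made canonical.
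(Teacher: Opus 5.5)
\textbf{There is a genuine gap.} Step 3 is the entire content of the proof, and you do not carry it out. Worse, the explicit candidate you propose is not a homomorphism $\Omega\to R$ in general.

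Take $R=k[\![x,y]\!]$ and $I=\m=(x,y)$, with $x_1=a=x$, $x_2=y$, $q=1\in I^{-1}$ and $j=2$. Your form is $u=e_2-\tfrac{y}{x}e_1$. On the syzygy $(y,-x)\in\Omega$ it takes the value $-x-y^2/x\notin R$.

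The Nakayama step and the $\Ext$ fallback are only sketched, and both rest on this unverified construction. So nothing in the proposal actually shows $qx_j\in\tr_R(\Omega)$.

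Separately, your Koszul relations require $\mu_R(I)\ge 2$, and this hypothesis is genuinely needed. For $I=(x)$ with $x$ a nonzerodivisor, $\tr_R(I)=R$ but $\Omega^1_R(I)=0$.

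\textbf{The missing idea.} Put $q$ into the syzygy rather than into the functional, and use only coordinate projections as functionals. For $q\in I^{-1}$ and $k\ne j$, consider the vector $qx_j e_k-qx_k e_j$.
\begin{itemize}
\item Its entries lie in $R$, because $qI\subseteq R$.
\item It lies in $\Omega$, because $x_k(qx_j)-x_j(qx_k)=0$.
\item The restriction to $\Omega$ of the $k$th coordinate projection $R^{\oplus n}\to R$ sends it to $qx_j$.
\end{itemize}
Hence $\tr_R(I)=I\cdot I^{-1}\subseteq I_1(A)\subseteq \tr_R(\Omega^1_R(I))$. No correction terms, Nakayama argument, or residue field extension are needed. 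Written as $f(x_j)e_k-f(x_k)e_j$ for $f\in I^*$, using $x_kf(x_j)=f(x_kx_j)=x_jf(x_k)$, the argument does not even use positive grade.

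This is essentially the paper's route in Corollary \ref{tracesyzidealcor}:
\begin{itemize}
\item Choose minimal generators that are nonzerodivisors; then $\tr_R(I)=\sum_i (x_i:I)$ by Proposition \ref{traceprop}(8).
\item Each $(x_i:I)$ lies in $((x_1,\dots,\hat{x}_k,\dots,x_n):x_k)$ for $k\ne i$.
\item These colon ideals sum to $I_1(A)$ by Proposition \ref{I1colon}.
\item $I_1(A)\subseteq \tr_R(\Omega^1_R(I))$ via coordinate projections, as in Proposition \ref{idealsyztrace}.
\end{itemize}
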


It is natural to ask when this behavior extends to modules of higher rank. Even for numerical semigroup rings, this turns out to fail outside exceedingly stringent hypotheses, which we prove with the following:

\begin{theorem}\label{introthm2}
Suppose $R$ is a numerical semigroup ring. Then the following are equivalent:
\begin{enumerate}
\item[$(1)$] We have $\tr_R(M) \subseteq \tr_R(\Omega^1_R(M))$ for all finitely generated torsion-free $R$-modules $M$ having no nonzero free summand.
\item[$(2)$] We have $\tr_R(M) \subseteq \tr_R(\Omega^1_R(M))$ for all finitely-generated torsion-free $R$-modules $M$ having no nonzero free summand and for which $\rank(M)=\mu_R(M)-1$.
\item[$(3)$] Either $e(R) \le 2$ or $\mathfrak{c}_R=\m$, where $e(R)$ denotes the Hilbert-Samuel multiplicity of $R$ and $\mathfrak{c}_R$ is the conductor ideal of $R$. 
\end{enumerate}
\end{theorem}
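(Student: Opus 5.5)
(1)$\Rightarrow$(2) is immediate, so the substance lies in (3)$\Rightarrow$(1) and (2)$\Rightarrow$(3). Throughout, $R=k[[H]]$ for a numerical semigroup $H$, $\m$ is the maximal ideal, $e=e(R)$ is the smallest generator of $H$, and $\overline{R}=k[[t]]$. Two facts will be used repeatedly. A torsion-free module without free summand has trace ideal contained in $\m$. Every trace ideal of a module of positive rank contains $\mathfrak{c}_R$: such a module contains a copy of $\overline{R}$ up to birational twist, and $\mathfrak{c}_R=\tr_R(\overline{R})$.

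\emph{Proof of (3)$\Rightarrow$(1).} If $\mathfrak{c}_R=\m$, then $H=\{0,e,e+1,\dots\}$. Here $\m$ is the only nontrivial trace ideal that can occur, since every ideal strictly between $\mathfrak{c}_R$ and $R$ is $\m$. Take $M$ torsion-free with no free summand and $M\neq 0$. Then $\tr(M)\subseteq\m$, and $\Omega^1_R(M)$ is nonzero of positive rank, because $M$ is not free. Hence
\[
\tr(\Omega^1_R M)\supseteq\mathfrak{c}_R=\m\supseteq\tr(M).
\]
If $e\le 2$, then $R$ is regular or a hypersurface of the form $k[[t^2,t^{2n+1}]]$, which has finite CM type. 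Torsion-free modules decompose into copies of the ideals $(t^2,t^{2j+1})$, and for a Gorenstein ring of this kind every such ideal satisfies $\Omega^1 I\cong I^{*}$ up to isomorphism. Concretely, the matrix factorization has period $2$, and syzygies of the indecomposables are again indecomposables of rank one. Since $\tr(M)$ is the sum of the traces of the summands, and each summand is an ideal of positive grade, Proposition~\ref{introthm1} applies summand by summand and gives (1).

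\emph{Proof of (2)$\Rightarrow$(3), by contrapositive.} Assume $e\ge 3$ and $\mathfrak{c}_R\ne\m$. I will produce $M$ with $\rank M=\mu(M)-1$ and $\tr(M)\not\subseteq\tr(\Omega^1 M)$. The natural candidate is built from the conductor, following the abstract's sharp estimate for $\tr_R(\Omega^1_R(\mathfrak{c}_R))$. Take $M=\Omega^1_R(\mathfrak{c}_R)^{*}$, or alternatively the cokernel of a single element of $\m^{\oplus n}$. The cleanest choice is $M=\m^{*}$-type modules of the form $\coker(R\to F)$, since those have $\rank=\mu-1$ automatically.

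Concretely, I let $x=(t^{a_1},\dots,t^{a_n})^T$ and set $M=R^n/Rx$. This module has rank $n-1$ and $\mu(M)=n$ provided the entries lie in $\m$. Its syzygy is $\Omega^1 M\cong Rx\cong R$ up to torsion, which is free, and that is not useful. So instead I use $M=\Omega^1_R(I)^{*}$ or $M=\Hom(\Omega^1(\mathfrak{c}_R),R)$.

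The better route is to dualize the defining sequence. Apply $(-)^{*}$ to
\[
0\to\Omega^1\mathfrak{c}_R\to R^n\to\mathfrak{c}_R\to 0,
\]
where $n=\mu(\mathfrak{c}_R)=e$, since the conductor is an $\overline{R}$-module, hence $\mu(\mathfrak{c}_R)=e$. This produces $M=\coker(\mathfrak{c}_R^{*}\to R^n)$, whose rank is $n-1$ and whose first syzygy is $\mathfrak{c}_R^{*}=\overline{R}$ up to a twist. We then get $\tr(\Omega^1 M)=\mathfrak{c}_R$. Meanwhile $\tr(M)$ contains $\tr(\Omega^1\mathfrak{c}_R)$-type elements. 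I must show that $\tr(M)\not\subseteq\mathfrak{c}_R$, i.e.\ that some homomorphism $M\to R$ has image containing an element $t^s$ with $s$ in $H$ below the conductor.

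\emph{Main obstacle.} This last point is where the hypotheses enter. When $\mathfrak{c}_R\ne\m$ there is a nonzero element of $H$ below the conductor. When $e\ge3$ there are enough rows to build a map $M\to R$, given by a vector $y\in R^n$ orthogonal to the image of $\overline{R}$, that hits such a monomial. I would verify this by a degree-by-degree check using the monomial basis $t^{c},\dots,t^{c+e-1}$ of $\mathfrak{c}_R$. Finally, I need to confirm that $M$ has no free summand; this holds because all entries of the presentation lie in $\m$.
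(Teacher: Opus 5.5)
Your directions $(1)\Rightarrow(2)$ and $(3)\Rightarrow(1)$ are fine and essentially match the paper. For $e=2$ you apply Proposition~\ref{introthm1} to each ideal summand instead of using $\Omega^1_R(M)\cong M^*$; either works. One reason needs replacing: trace ideals are not monotone under submodules, so ``$N$ contains a copy of $\bar R$'' does not give $\mathfrak c_R\subseteq\tr_R(N)$. Instead take a nonzero $f\colon N\to R$ and use $\mathfrak c_R\subseteq\tr_R(f(N))\subseteq\tr_R(N)$.

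\textbf{The gap is in $(2)\Rightarrow(3)$.} The step you call the main obstacle is not merely unverified; it is false for an infinite family of rings. Since $\mathfrak c_R$ is reflexive, dualizing $0\to\mathfrak c_R^*\to R^{e}\to M\to 0$ gives $M^*\cong\ker(R^{e}\to\mathfrak c_R)=\Omega^1_R(\mathfrak c_R)$. Concretely, a homomorphism $M\to R$ is exactly a relation $y$ with $\sum y_ix_i=0$ among the generators $x_i$ of $\mathfrak c_R$, and its image is $(y_1,\dots,y_e)$. Hence $\tr_R(M)=I_1(A)$, where $A$ presents $\mathfrak c_R$. By Theorems~\ref{conductorsyzthm} and~\ref{syzofconductorthm}, $I_1(A)$ equals $\mathfrak c_R=\tr_R(\Omega^1_R(M))$ precisely when $R=k[t^{e},t^{se+1},\dots,t^{se+e-1}]$. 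For $e\ge 3$ and $s\ge 2$ these rings satisfy your contrapositive hypotheses.

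Take $R=k[t^3,t^7,t^8]$, where $\mathfrak c_R=(t^6,t^7,t^8)$. A homogeneous relation $r_1t^6+r_2t^7+r_3t^8=0$ cannot have an entry of degree $0$ or $3$, because the other two entries would need degrees outside $\langle 3,7,8\rangle$. So $\tr_R(M)=\mathfrak c_R=\tr_R(\Omega^1_R(M))$, and no degree-by-degree check can produce the map $M\to R$ you want.

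\textbf{What is needed instead.} You need a different witness, at least for this family. The paper first applies Lemma~\ref{Rhasminmult}. This is the same dualizing construction as yours, but with $\m^\ell$ ($\ell$ the reduction number) in place of $\mathfrak c_R$. Proposition~\ref{mpowern} then guarantees $\tr_R(M)=\m$, while $\Omega^1_R(M)\cong(y^\ell:\m^\ell)$ has trace $\m$ only under minimal multiplicity. Under minimal multiplicity it then uses the $3$-generated rank-$2$ modules $M_i=\im\begin{pmatrix}t^{a_n}&0&t^{a_n+i}\\0&t^{a_1}&t^{a_2}\end{pmatrix}$, whose syzygy is $(t^{a_n}:t^{a_n+i})$ with trace $\neq\m$. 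For $k[t^3,t^7,t^8]$, $M_2$ has trace $\m$ but $\Omega^1_R(M_2)\cong(t^8:t^{10})=\mathfrak c_R$.

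Outside the family above your module does work. However, proving $\tr_R(M)\not\subseteq\mathfrak c_R$ there is essentially Theorem~\ref{syzofconductorthm} itself, which you have not supplied.

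Your no-free-summand justification is also wrong. Entries of the presentation lying in $\m$ only give $\mu_R(M)=e$; for instance $R\oplus\m$ has such a presentation. The correct argument (Proposition~\ref{nofreesummand}) uses that every nonzero map $\mathfrak c_R\to R$ is injective, together with minimality of the generators of $\mathfrak c_R$.
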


Theorem \ref{introthm1} implies $\mathfrak{c}_R \subseteq \tr_R(\Omega^1_R(\mathfrak{c}_R))$ whenever $R$ is analytically unramified (this is well-known if $\tr_R(\Omega^1_R(\mathfrak{c}_R))$ admits a principal reduction; see \Cref{traceprop} (9)). Viewing $\tr_R(\Omega^1_R(\mathfrak{c}_R))$ as an invariant of $R$, it is thus natural to wonder, at the opposite end of the spectrum from \Cref{introthm2}, when we have that $\tr_R(\Omega^1_R(\mathfrak{c}_R))$ is as small as possible, i.e., when $\tr_R(\Omega^1_R(\mathfrak{c}_R))=\mathfrak{c}_R$. For numerical semigroup rings, we prove this only occurs under similarly restrictive hypotheses as \Cref{introthm1}.  

\begin{theorem}\label{introthm3}
Suppose $R$ is a numerical semigroup ring and let $A$ be a minimal homogenous presentation matrix for the conductor ideal $\mathfrak{c}_R$ of $R$. Then $\tr_R(\Omega^1_R(\mathfrak{c}_R))=I_1(A)$, and the following are equivalent:
\begin{enumerate}
\item[$(1)$] $\tr_R(\Omega^1_R(\mathfrak{c}_R))=\mathfrak{c}_R$.
\item[$(2)$] $R$ is not a DVR and for some integer $s$, we have $R=k[t^{a_1},t^{sa_1+1},t^{sa_1+2},\dots,t^{sa_1+a_1-1}]$.
\end{enumerate}

\end{theorem}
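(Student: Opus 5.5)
Throughout, write $R=k[H]\subseteq k[t]$ for the semigroup $H$ with multiplicity $e=a_1$, Frobenius number $F$, and conductor $\mathfrak{c}_R=t^{F+1}k[t]$; note that $\mathfrak{c}_R$ is an $R$-module generated by the monomials $t^{F+1},\dots,t^{F+e}$. The plan is to prove the formula $\tr_R(\Omega^1_R(\mathfrak{c}_R))=I_1(A)$ first and then use it to reduce $(1)\Leftrightarrow(2)$ to a combinatorial condition on $H$.

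\textbf{Step 1: the trace formula.} The syzygy $\Omega=\Omega^1_R(\mathfrak{c}_R)\subseteq R^e$ is generated by the columns of $A$, so $\tr_R(\Omega)\subseteq I_1(A)$ is not automatic; what is automatic is $\tr_R(\Omega)\supseteq \tr_R$ of its generators' coordinates only through maps. I will identify $\Omega$ concretely:
\[
\Omega=\{(r_1,\dots,r_e)\in R^e : \textstyle\sum r_i t^{F+i}=0\},
\]
which is generated by homogeneous binomial relations $t^{a}e_i-t^{b}e_j$ with $a+F+i=b+F+j$.
\begin{itemize}
\item For $\subseteq$: since $\Omega$ is a torsion-free module of rank $e-1$ and $\Omega\subseteq\m R^e$ by minimality, every $f\in\Hom_R(\Omega,R)$ extends to a map on $Q(R)^e$. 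Being $Q(R)$-linear, it is given by a row vector $(q_1,\dots,q_e)$, defined modulo the vector $(t^{F+1},\dots,t^{F+e})$. One then shows that $f(\Omega)$ lies in the ideal generated by the entries of $A$, using the binomial shape of the generators: $f(t^ae_i-t^be_j)=t^aq_i-t^bq_j$.
\item For $\supseteq$: for each pair $i,j$, project to coordinates and use the natural maps $\Omega\to R$ given by coordinate projections. The projection $\pi_i$ sends $\Omega$ onto the ideal generated by the $i$-th row entries of $A$. Hence $\sum_i \pi_i(\Omega)=I_1(A)$.
\end{itemize}
The coordinate projections therefore give $I_1(A)\subseteq\tr(\Omega)$ immediately. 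The real content is the reverse inclusion. For it, I would use the fact that $\Omega^*\cong \Hom(\Omega,R)$ is generated by the projections together with the single "division" map coming from $\Omega\cong \ker$. More concretely, any $f$ differs from a combination of projections by the map $v\mapsto c\cdot\sum v_it^{F+i}$ (which is zero), so all $f$ are $Q(R)$-combinations $\sum q_i\pi_i$ with $q_i$ possibly non-integral. Controlling these non-integral $q_i$ is the main obstacle: the condition $\sum q_i v_i\in R$ for all $v\in\Omega$ must be shown to force the image into $I_1(A)$. I would handle this degree by degree, using that the image is a monomial ideal and checking each monomial $t^n\in f(\Omega)$ against the explicit entries $t^a$ of $A$.

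\textbf{Step 2: computing $I_1(A)$.} The entries of a minimal homogeneous presentation are monomials $t^a$, where $a\in H$ ranges over the degree differences needed to connect generator $t^{F+i}$ to $t^{F+j}$ minimally. Thus $I_1(A)$ is the monomial ideal spanned by $t^a$ for $a=h$ minimal with $F+i+h\in F+j+H$, together with the syzygies among generators in the same residue class.

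\textbf{Step 3: the equivalence $(1)\Leftrightarrow(2)$.} By Proposition \ref{introthm1} we have $\mathfrak{c}_R\subseteq I_1(A)$, so $(1)$ holds exactly when every entry of $A$ has degree $\ge F+1$.
\begin{itemize}
\item \emph{(2)$\Rightarrow$(1).} With $H=\langle a_1, sa_1+1,\dots,sa_1+a_1-1\rangle$, we have $F=sa_1-1$. The minimal relations among the $t^{F+i}$ have coefficients among $t^{a_1}$ and $t^{sa_1+j}$. One then checks directly that all minimal entries have degree $\ge sa_1=F+1$; the coefficient $t^{a_1}$ must not occur minimally, and this has to be verified carefully, or else absorbed by the fact that relations are generated differently.
\item \emph{(1)$\Rightarrow$(2).} Take the smallest nonzero $h\in H$ with $h<F+1$, which exists unless $R$ is a DVR. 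Produce the relation $t^{h}\cdot t^{F+i}=(\text{monomial})\cdot t^{F+j}$ forced by minimality, which gives an entry of degree $<F+1$ unless $H$ has the stated shape. Concretely, the condition forces every element of $H$ below $F$ to be a multiple of $a_1$ and the generators above to be consecutive, which yields the form $sa_1+1,\dots,sa_1+a_1-1$.
\end{itemize}
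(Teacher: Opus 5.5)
\textbf{Main gap: the inclusion $\tr_R(\Omega^1_R(\mathfrak{c}_R))\subseteq I_1(A)$.} This is the first assertion of the theorem, and it is not proved. You correctly obtain $I_1(A)\subseteq\tr_R(\Omega^1_R(\mathfrak{c}_R))$ from the coordinate projections, and you correctly identify the reverse inclusion as the real content. For it, however, you only restate the problem (every $f$ is $\sum q_i\pi_i$ with $q_i\in Q(R)$) and promise to check monomials ``degree by degree.''

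Nothing in that plan uses anything about $\mathfrak{c}_R$ beyond its being a monomial ideal with binomial syzygies. For such ideals the inclusion is false: in Example~\ref{assumptionsneeded}, $I=(t^9,t^{11})\subseteq k[t^5,t^9,t^{11},t^{17}]$ has $I_1(A)\subsetneq\tr_R(\Omega^1_R(I))$. So an idea specific to the conductor is missing. The paper supplies it in three steps.
\begin{enumerate}
\item[(a)] The binomial generators give a surjection $\bigoplus_{i\ne j}(t^{F+i}:t^{F+j})\twoheadrightarrow\Omega^1_R(\mathfrak{c}_R)$. Since $(x_i:x_j)\cong(x_j:x_i)$ and these colons depend only on $j-i$, this bounds the trace by $\sum_{i\ge 2}\tr_R(t^{F+1}:t^{F+i})$.
\item[(b)] Since $\mathfrak{c}_R$ is Ulrich, $t^{a_2},\dots,t^{a_n}$ already lie in $I_1(A)$, so only monomials $t^{ua_1}$ need checking.
\item[(c)] Since $\mathfrak{c}_R=(t^{F+1}:\mathfrak{c}_R)\subseteq(t^{F+1}:t^{F+j})$, the dual $(R:_{Q(R)}(t^{F+1}:t^{F+j}))$ lies in $\bar{R}$. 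Hence a monomial of the trace is $t^{a}t^{\ell}$ with $t^a$ in the colon and $\ell\ge 0$. A case analysis on the residue of $\ell$ modulo $a_1$ then places $t^{ua_1}$ in some $(t^{F+1}:t^{F+i})$ or $(t^{F+i}:t^{F+1})$.
\end{enumerate}

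\textbf{Consequence for (2)$\Rightarrow$(1).} Your argument for this direction is routed through the unproven equality, and you also leave its one concrete check (that no entry of $A$ has degree $\le F$) explicitly unverified. So this direction is not established. It can be repaired inside your own $Q(R)$-linear framework without the full formula:
\begin{itemize}
\item Under (2), every element of $H$ below $sa_1$ is a multiple of $a_1$, so one checks directly that $(t^{F+i}:t^{F+j})\subseteq\mathfrak{c}_R$ for all $i\ne j$.
\item The $i$-th coordinates of elements of $\Omega^1_R(\mathfrak{c}_R)$ form $\sum_{j\ne i}(t^{F+j}:t^{F+i})$. Hence $\Omega^1_R(\mathfrak{c}_R)\subseteq\mathfrak{c}_R^{\oplus a_1}$, and this module is closed under multiplication by $\bar{R}$.
\item Every $f\in\Omega^1_R(\mathfrak{c}_R)^*$ is $Q(R)$-linear, so its image is an ideal stable under $\bar{R}$. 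It is therefore contained in $(R:_{Q(R)}\bar{R})=\mathfrak{c}_R$.
\end{itemize}

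\textbf{The direction (1)$\Rightarrow$(2).} This direction needs only the easy inclusion $I_1(A)\subseteq\tr_R(\Omega^1_R(\mathfrak{c}_R))$, so it is salvageable, but your sketch does not yet argue it. A clean version runs as follows.
\begin{itemize}
\item $t^h\in(t^{F+i}:t^{F+j})$ if and only if $h\in H$ and $h+j-i\in H$.
\item Hence (1) forces that no $h\in H$ with $h\le F$ has any of $h+1,\dots,h+a_1-1$ in $H$.
\item Walk up $0,a_1,2a_1,\dots$ and let $s$ be minimal with $sa_1>F$. This gives $H\cap[0,sa_1-1]=\{0,a_1,\dots,(s-1)a_1\}$.
\item Hence $F=sa_1-1$ and $H=\langle a_1,sa_1+1,\dots,sa_1+a_1-1\rangle$.
\end{itemize}
This combinatorial route is more elementary than the paper's argument through Theorem~\ref{tracesyzylrich} and Lemma~\ref{mingenforulrich}.

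\textbf{Two smaller errors.}
\begin{itemize}
\item An element $h\in H$ with $0<h<F+1$ need not exist for non-DVRs. It fails to exist exactly when $\mathfrak{c}_R=\m$, which is the case $s=1$ of (2).
\item The phrase about ``syzygies among generators in the same residue class'' is vacuous. The generators $t^{F+1},\dots,t^{F+a_1}$ lie in pairwise distinct residue classes modulo $a_1$.
\end{itemize}
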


This work is structured as follows: Section  \ref{prelimsection} contains the necessary background information on trace ideals, Ulrich modules, and reductions needed for the remainder of the paper. Section \ref{mainresultssection} contains our main results on when taking the syzygy induces containment on trace ideals, and in particular proves Theorem \ref{introthm2} (see Theorem \ref{mainconductortm}). We also prove Theorem \ref{tracesyzylrich} which describes, for numerical semigroup rings, the structure of $\tr_R(\Omega^1_R(M))$ when $M$ is a so-called Ulrich module (see \Cref{ulrichdefn} for the definition), and serves a key point in the proofs of both Theorem \ref{introthm2} and \ref{introthm3}. Section  \ref{conductortrace} focuses more specifically on the trace ideals of syzygies of the conductor ideal, calculating an explicit formula for $\tr_R(\Omega^1_R(\mathfrak{c}_R))$ in Theorem \ref{conductorsyzthm} and culminating in the proof of Theorem \ref{introthm3} (see Theorem \ref{syzofconductorthm}).

\section{Preliminaries}\label{prelimsection}

In this section we provide some background needed for our main results. Throughout, we let $(R,\m,k)$ denote either a local ring or a positively graded $k$-algebra over the field $k$ with homogeneous maximal ideal $\m$. Set $d:=\dim(R)$. Unless otherwise stated, all modules are assumed to be finitely generated and in the graded setting to be graded; we write $|x|$ for the degree of a homogeneous element $x \in M$. We let $e_R(M):=\displaystyle \lim_{n \to \infty} \dfrac{d!l_R(M/\m^n M)}{n^{d}}$ denote the Hilbert-Samuel multiplicity of $M$. We let $\mu_R(M):=\dim_k(M/\m M)$ denote the minimal number of generators of $M$. We let $\mathfrak{c}_R$ denote the \emph{conductor ideal} for $R$, so $\mathfrak{c}_R:=(R:_{Q(R)} \bar{R})$ where $\bar{R}$ denotes the integral closure of $R$ and $Q(R)$ is the total quotient ring of $R$.

We write $(-)^*:=\Hom_R(-,R)$. If $A$ is a minimal presentation matrix for $M$, we let $\Tr_R(M):=\coker(A^T)$, known as the \emph{Auslander Transpose} of $M$. The Auslander transpose is a key aspect of the stable module theory of Auslander-Buchweitz \cite{AB69}. We refer to \cite[Section 2]{DT15} as a reference for several of its key properties.

We recall that if $M$ is an $R$-module, then the \emph{trace ideal} of $M$ in $R$ is the image of the natural map $M^* \otimes_R M \to R$ given by $f \otimes x \mapsto f(x)$. We recall the key facts about trace theory that are relevant to our work below. 
\begin{prop}\label{traceprop}
Let $M$ and $N$ be $R$-modules and let $I$ be an ideal in $R$. Then:
\begin{enumerate}
\item[$(1)$] We have $I \subseteq \tr_RI)$ with equality if and only if $I$ is a trace ideal, that is, if we have $I=\tr_R(M)$ for some $M$. In particular, $\tr_R(\tr_R(M))=\tr_R(M)$ for any $R$-module $M$.
\item[$(2)$] If $M$ generates $N$, that is, if there is a surjection $M^{\oplus n} \twoheadrightarrow N$ for some $n$, then we have $\tr_R(N) \subseteq \tr_R(M)$. 
\item[$(3)$] We have $\tr_R(M \oplus N)=\tr_R(M)+\tr_R(N)$.
\item[$(4)$] We have $\tr_R(M) \subseteq \tr_R(M^*)$ with equality if $M$ is reflexive.
\item[$(5)$] We have $\tr_R(M)=R$ if and only if $R$ is a direct summand of $M$.
\item[$(6)$] If $R^{\oplus m} \xrightarrow{A} R^{\oplus n} \xrightarrow{p} M \rightarrow 0$ is a minimal presentation matrix for $M$, then $\tr_R(M)=I_1(B)$ where $B$ is a matrix whose columns form a minimal generating set for $\ker(A^T)$. 
\item[$(7)$] If $I$ contains a nonzerodivisor $x$, then $\tr_R(I)=(R:_{Q(R)} I)I=((x:I)I:x)$.
\item[$(8)$] If $x_1,\dots,x_n$ is a minimal generating set for $I$ where each $x_i$ a nonzerodivisor (such a generating set exists if and only if $I$ has positive grade), then $\tr_R(I)=\sum^n_{i=1} (x_i:I)$.
\item[$(9)$] If $R$ is analytically unramified, and $I$ is a trace ideal of positive grade with a principal reduction, then $\mathfrak{c}_R \subseteq I$. %check hypotheses
\item[$(10)$] If $R$ is analytically unramified, then the conductor $\mathfrak{c}_R$ is a trace ideal.
\end{enumerate}
\end{prop}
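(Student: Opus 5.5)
Most of the items in Proposition \ref{traceprop} are standard, and I would prove them in the order listed, using the description $\tr_R(M)=\im(M^*\otimes_R M\to R)$.

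\emph{Items (1)--(5).} For (1), the inclusion $I\subseteq\tr_R(I)$ holds because $1\in R^*$ restricts to the inclusion $I\hookrightarrow R$. For (2), given $\pi:M^{\oplus n}\twoheadrightarrow N$, every $f\in N^*$ and $y=\pi(x)$ satisfy $f(y)=(f\circ\pi)(x)$, and $f\circ\pi$ is a sum of maps out of copies of $M$. Item (3) is immediate since $(M\oplus N)^*=M^*\oplus N^*$. To finish (1), I would show that $\tr_R(M)$ is generated by $M$: the maps $f\in M^*$ assemble into a surjection $M^{\oplus n}\twoheadrightarrow\tr_R(M)$. Then (2) gives $\tr_R(\tr_R M)\subseteq\tr_R(M)$, and the reverse inclusion is the first part of (1). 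The phrase ``trace ideal'' then means exactly $I=\tr_R(I)$. For (4), each $x\in M$ gives the evaluation map $M^*\to R$, so $f(x)$ lies in $\tr_R(M^*)$. If $M$ is reflexive, applying this to $M^*$ gives $\tr(M^*)\subseteq\tr(M^{**})=\tr(M)$. For (5), if $\tr_R(M)=R$ then $1=\sum f_i(x_i)$. In the local or graded case some $f_i(x_i)$ is a unit, so $f_i$ is surjective and splits.

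\emph{Items (6)--(8).} For (6), $M^*=\ker(A^T)\subseteq R^{n}$, where a column $b$ corresponds to the functional $e_j\mapsto b_j$ on generators. The values of these functionals on the generators $p(e_j)$ are exactly the entries of $B$. For (7), a nonzerodivisor $x\in I$ gives $I^*\cong(R:_{Q(R)}I)$, via multiplication, and this yields the first equality. For the second, $(R:I)=x^{-1}(x:I)$, so $(R:I)I=x^{-1}(x:I)I$. Its elements are the $r$ with $rx\in(x:I)I$, and conversely $(x:I)I\subseteq xR$, so the two ideals agree. For (8), apply (7) with each generator $x_i$: we have $(R:I)I=\sum_i(R:I)x_i$, and $(R:I)x_i=x_i\cdot x_i^{-1}(x_i:I)=(x_i:I)$. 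The parenthetical claim that such a generating set exists is prime avoidance. Given a minimal generating set $y_1,\dots,y_n$ and a nonzerodivisor $x\in I\setminus\m I$, one can adjust each $y_i$ to $y_i+\lambda x$, or $y_i+m$ with $m\in\m I$, to avoid the finitely many associated primes. The details need the residue field infinite or the standard $\m I$-adjustment trick.

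\emph{Items (9)--(10).} These are the main obstacle, and I would treat them via integral closure. For (10), $\bar R$ is module-finite over $R$ because $R$ is analytically unramified, so $\mathfrak c_R$ contains a nonzerodivisor. By (7), $\tr(\mathfrak c)=(R:\mathfrak c)\mathfrak c$. Next I would note that $(R:\mathfrak c)\supseteq\bar R$, since $\mathfrak c$ is an $\bar R$-ideal, and also $(R:\mathfrak c)\mathfrak c\subseteq R$. Because $\mathfrak c\bar R=\mathfrak c$, the ideal $(R:\mathfrak c)\mathfrak c$ is an $\bar R$-module inside $R$, hence contained in $\mathfrak c$, which gives equality.

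For (9), let $I=\tr(I)$ have principal reduction $x$, a nonzerodivisor since $I$ has positive grade. Then $I^{n+1}=xI^n$ for some $n$, so $x^{-1}I^{n}$ stabilizes as an overring contained in $\bar R$. I would want to show that $\mathfrak c\subseteq I$ by proving $\mathfrak c\subseteq (R:I)I$. Here $(R:I)=\End(I)$ is an overring containing $B=I^n:I^n$. The overring $B$ satisfies $I B=I$, so every $c\in\mathfrak c$ gives $c\,B\subseteq R$, hence $c\in(R:B)$. Since $I^n x^{-n}\subseteq B$ and $B\cdot I^n x^{-n}$ contains $1$, I would get $c\in c\,I^nx^{-n}B\subseteq I$. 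This needs checking: $cB\subseteq R$ and $I^nB=I^n$ should give $c\,x^{-n}I^n\subseteq (R:I)\cdot I$. I expect this bookkeeping to be the delicate step, and I would fall back on citing the standard reference if it resists.
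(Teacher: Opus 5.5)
Your route differs from the paper's. The paper proves this proposition entirely by citing the literature, while you give self-contained arguments. Those arguments for (1)--(8) and (10) are correct. One small precision: in the graded case of (5), take $f_i$ and $x_i$ homogeneous, since an inhomogeneous element outside $\m$ need not be a unit. A benefit of the direct route is that it shows (6) and (8) do not use minimality. It also shows (10) uses only that $\mathfrak c_R$ is an $\bar R$-module containing a nonzerodivisor.

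Item (9), however, is not proved as written, because two of your intermediate assertions are false in general. Take $R=k[t^4,t^5,t^{11}]$, whose gaps are $1,2,3,6,7$. Let $I=\m$, which is a trace ideal with principal reduction $x=t^4$. For $n$ with $I^{n+1}=xI^n$ we get $B=(I^n:I^n)=x^{-n}I^n=R[\m/t^4]=k[t]=\bar R$.
\begin{itemize}
\item Containment fails: $t\in B$, but $t\notin(R:_{Q(R)}\m)$ because $t\cdot t^5=t^6\notin R$. So $B\not\subseteq(R:_{Q(R)}I)$.
\item $IB\neq I$: we have $\m B=t^4k[t]\ni t^6$, while $t^6\notin\m$. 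What actually holds is $IB=xB$ and $I^nB=I^n$.
\end{itemize}

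The inclusion you flag at the end is nevertheless true, for a different reason. Take $n\ge 1$ and $c\in\mathfrak c_R$. Then
\[c=c\,x^{-n}x^n\in c\,x^{-n}I^n=\bigl(c\,x^{-n}I^{n-1}\bigr)\,I .\]
Moreover $c\,x^{-n}I^{n-1}\subseteq(R:_{Q(R)}I)$, because $\bigl(c\,x^{-n}I^{n-1}\bigr)I=cB\subseteq c\bar R\subseteq R$. Here $B\subseteq\bar R$ by the determinant trick, since $BI^n=I^n$ and $I^n$ contains the nonzerodivisor $x^n$. Hence $c\in(R:_{Q(R)}I)I=\tr_R(I)=I$ by (7). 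With this repair your argument for (9) is complete, and it does not use that $R$ is analytically unramified.
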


\begin{proof}
Items (1)-(5) follow from \cite[Proposition 2.8]{lindo2017trace}, (6) can be found in \cite[Remark 2.5]{lindo2017trace}, and (7) follows from \cite[Proposition 2.4]{kobayashi2019rings}. For item (8), see \cite[Proposition 3.4 (3)]{lyle2024annihilators}. Finally, (9) and (10) follow from \cite[Corollary 3.6]{DS23} and \cite[Lemma 2.6 (1)]{GI20} respectively.
\end{proof}

We recall if $R$ is Cohen-Macaulay (CM) then we always have the so-called Abhyankar bound $e(R) \ge \mu_R(\m)-d+1$. We say $R$ has \emph{minimal multiplicity} if equality is achieved. A notable feature of Cohen-Macaulay rings with minimal multiplicity is their abundance of so-called Ulrich modules, whose definition we recall below:
\begin{defn}\label{ulrichdefn}
An $R$-module $M$ is said to be Ulrich if it is maximal Cohen-Macaulay (MCM) and $e_R(M)=\mu_R(M)$. 
\end{defn}
When $|k|=\infty$, an MCM $R$-module $M$ is Ulrich if and only if $\underline{x}M=\m M$ for some minimal reduction $\underline{x}$ of $\m$. For our purposes, the following serves as the primary examples of Ulrich modules:
\begin{prop}\label{ulrichex}\
\begin{enumerate}
\item[$(1)$] If $R$ is CM with minimal multiplicity then $\Omega^1_R(M)$ is Ulrich if $M$ is MCM.
\item[$(2)$] If $R$ is CM of dimension $1$ with minimal multiplicity, then $M^*$ is Ulrich provided $M$ has no nonzero free summand. In particular, under these hypotheses, if $I$ is an ideal in $R$ containing a nonzerodivisor $x$, then $(x:I)$ is Ulrich.
\item[$(3)$] If $R$ is analytically unramified of dimension $1$, then $\mathfrak{c}_R$ and $\bar{R}$ are Ulrich.
\item[$(4)$] If $R$ is CM of dimension $1$, then $\m^n$ is Ulrich for any $n \gg 0$. In particular, $n$ can be chosen as any value at least the reduction number of $R$ (upon extension of the residue field, if necessary).

\item[$(5)$] If $R$ is reduced of dimension $1$ and $M$ is Ulrich, then $M^*$ is Ulrich.
\end{enumerate}
\end{prop}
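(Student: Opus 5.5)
We prove the five items in turn, using the characterization recalled just before the statement: when $|k|=\infty$, an MCM module $M$ is Ulrich iff $\underline{x}M=\m M$ for a minimal reduction $\underline{x}$ of $\m$. We may first pass to $R[t]_{\m[t]}$, which is faithfully flat with infinite residue field; this preserves MCM-ness, $\mu$, $e$, minimal multiplicity, reducedness and analytic unramifiedness. The basic input is that minimal multiplicity is equivalent to $\m^2=\underline{x}\m$.

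\emph{Item (1).} Write $0\to\Omega^1_R(M)\to F\to M\to 0$ with $F$ free and $\Omega^1_R(M)\subseteq\m F$. Depth counting shows $\Omega^1_R(M)$ is MCM. Since $M$ is MCM, $\underline{x}$ is $M$-regular, so $\mathrm{Tor}_1(R/\underline{x},M)=0$. Tensoring with $R/\underline{x}$ therefore gives $\Omega^1_R(M)\cap\underline{x}F=\underline{x}\Omega^1_R(M)$. Then
\[\m\,\Omega^1_R(M)\subseteq \m^2F\cap\Omega^1_R(M)=\underline{x}\m F\cap\Omega^1_R(M)\subseteq\underline{x}F\cap\Omega^1_R(M)=\underline{x}\Omega^1_R(M),\]
so $\Omega^1_R(M)$ is Ulrich.

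\emph{Item (2).} In dimension $1$, $M^*$ is a second syzygy of $\Tr_R(M)$, hence MCM. The key point is that when $M$ has no free summand, no $f\in M^*$ is surjective, so $M^*=\Hom_R(M,\m)$. We claim $\Hom_R(M,\m)$ is Ulrich. Let $x$ be a minimal reduction of $\m$ with $\m^2=x\m$, and take $f\in\Hom(M,\m)$, $y\in\m$. Then $yf(M)\subseteq\m^2=x\m$. Since $x$ is a nonzerodivisor, $g:=x^{-1}yf$ is a well-defined map $M\to\m$. Hence $yf=xg$, giving $\m M^*=xM^*$. The statement about $(x:I)$ follows because $(x:I)\cong I^*$ and $I$ has no free summand unless $I\cong R$. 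In that case $(x:I)$ is principal, so the claim holds trivially or fails; I would treat $I$ proper non-principal, or note that the statement intends $I\not\cong R$.

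\emph{Item (3).} Here $\bar R$ is finite over $R$ and a semilocal PID, so $x\bar R=\m\bar R$ for a suitable reduction $x$, since $\m\bar R$ is principal and a general element generates it. Thus $\bar R$ is Ulrich. For $\mathfrak c_R$, which is a $\bar R$-ideal, we get $\m\mathfrak c=\m\bar R\mathfrak c=x\bar R\mathfrak c=x\mathfrak c$. Both modules are torsion-free, hence MCM.

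\emph{Item (4).} If $r$ is the reduction number, then $\m^{n+1}=x\m^n$ for $n\ge r$, so $\m\cdot\m^n=x\m^n$.

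\emph{Item (5).} Since $R$ is reduced, $Q(R)$ is a product of fields and $M\subseteq M\otimes Q$, so $M^*$ is identified with $\{q\in\Hom_Q(MQ,Q): qM\subseteq R\}$. I would first reduce, via direct summands, to the case where $M$ has no free summand, since free summands are trivially Ulrich only if $R$ is a DVR. This step is the main obstacle. The cleaner route is $\m M=xM$, which gives $M\cong\m M$ via multiplication by $x$. Then $\Hom(M,R)\cong\Hom(\m M,R)=\Hom(M,(R:\m))$. Showing that $\m\Hom(M,(R:\m))$ lands in $x\Hom(M,R)$ requires care, and I would attempt it via $\Hom(M,\m)=\m$-stable argument as in (2), using that $\End(M)\supseteq R[\m/x]$, the blowup ring. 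Indeed $\m M=xM$ makes $M$ a module over $R[\m/x]$, so $M^*$ is too, and $\m M^*\subseteq x\,R[\m/x]M^*=xM^*$.
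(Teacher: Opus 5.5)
\textbf{Verdict: correct, by a different route from the paper.} Your proofs of all five items are correct. The paper proves nothing here directly: each item is quoted from the literature (Kobayashi--Takahashi for (1) and (5), an earlier lemma of Lyle--Maitra for (2), and a published example for (3) and (4)).

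You instead derive everything from the criterion $\m N=\underline{x}N$ for MCM $N$, after enlarging the residue field:
\begin{itemize}
\item For (1), you combine the vanishing of $\operatorname{Tor}_1^R(R/\underline{x}R,M)$ with $\m^2=\underline{x}\m$.
\item For (2), you observe $M^*=\Hom_R(M,\m)$ and use the division $yf=x\cdot(x^{-1}yf)$.
\item For (3), you use invertibility of $\m\bar{R}$ in the principal ideal ring $\bar{R}$.
\item For (4), it is just the definition of the reduction number.
\end{itemize}

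For (5), your last two sentences already form a complete proof. Since $x$ is $M$-regular and $aM\subseteq xM$ for $a\in\m$, there is a unique $\phi_a\in\End_R(M)$ with $x\phi_a=a\cdot 1_M$. Then $af=x(f\circ\phi_a)$ for every $f\in M^*$, and $M^*$ is MCM because it is torsionless. The earlier material on splitting off free summands and on $\Hom_R(M,(R:\m))$ is unnecessary and should be cut.

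\textbf{What each route buys.} The citations are shorter. Your approach is self-contained and makes the real hypotheses visible. In particular, (5) only needs $R$ to be CM of dimension $1$; reducedness serves only to guarantee this.

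Your caveat in (2) is correct. The ``in particular'' clause needs $I$ non-principal. For $I=aR$ with $a$ a nonzerodivisor and $x=ba$, one gets $(x:I)=bR\cong R$, which is Ulrich only when $R$ is a DVR.

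\textbf{Two small points to tidy in (3).}
\begin{itemize}
\item When $R$ is not a domain, $\bar{R}$ is a finite product of semilocal PIDs rather than a single PID.
\item Instead of ``a general element generates $\m\bar{R}$'', take any minimal reduction $x$ with $x\m^r=\m^{r+1}$. In $(\m\bar{R})^{r+1}=x(\m\bar{R})^r$, cancel the invertible ideal $(\m\bar{R})^r$ to get $\m\bar{R}=x\bar{R}$.
\end{itemize}
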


\begin{proof}
Item (1) is the content of \cite[Theorem B (1)]{KT19}, while Item (2) can be found in \cite[Lemma 3.12 (2)-(3)]{lyle2024annihilators}. For items (3) and (4), see \cite[Example 2.4 (iii)]{CL23}. Finally, (5) follows from \cite[Theorem A (2)]{KT19}.
\end{proof}

For later purposes, we add to this list of examples via the following:

\begin{prop}\label{traceofulrichisulrich}
Suppose $R$ is CM of dimension $1$. If $M$ is an Ulrich module, then $\tr_R(M)$ is an Ulrich module. 
\end{prop}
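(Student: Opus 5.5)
The plan is to use the characterization of Ulrich modules via minimal reductions recalled after \Cref{ulrichdefn}: when $|k|=\infty$, an MCM module $N$ is Ulrich if and only if $xN=\m N$ for some minimal reduction $x$ of $\m$. In dimension $1$ a minimal reduction is a single nonzerodivisor $x$. So it will suffice to check two things: that $\tr_R(M)$ is MCM, and that $x\,\tr_R(M)=\m\,\tr_R(M)$ for the same $x$ that witnesses $xM=\m M$.

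First I reduce to an infinite residue field. I pass to $S:=R[t]_{\m R[t]}$, which is faithfully flat over $R$, is CM of dimension $1$, and has infinite residue field $k(t)$. The relevant notions are compatible with this base change.
\begin{enumerate}
\item[$\bullet$] Since $M$ is finitely presented, $\Hom_R(M,R)\otimes_R S\cong \Hom_S(M\otimes_R S,S)$, and hence $\tr_S(M\otimes_R S)=\tr_R(M)S\cong \tr_R(M)\otimes_R S$.
\item[$\bullet$] Multiplicity, minimal number of generators and depth are all preserved under this extension, since $\m S$ is the maximal ideal of $S$.
\end{enumerate}
Consequently a module $N$ is Ulrich over $R$ if and only if $N\otimes_R S$ is Ulrich over $S$. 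So I may assume $k$ is infinite and fix a nonzerodivisor $x\in\m$ generating a minimal reduction of $\m$ with $xM=\m M$.

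Next comes the MCM condition. The trace $\tr_R(M)$ is an ideal of $R$, hence a submodule of $R$. Since $R$ has depth $1$, every nonzero submodule of $R$ has depth $1$ (a nonzerodivisor on $R$ is a nonzerodivisor on any submodule), so $\tr_R(M)$ is MCM when nonzero. In the degenerate case $\tr_R(M)=0$ (i.e.\ $M^*=0$), the statement is vacuous, or holds trivially under the usual convention that $0$ is Ulrich; I will note this separately.

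The key step is then a one-line computation. Write $\tr_R(M)=\sum_{f\in M^*} f(M)$. For each $f\in M^*$, $R$-linearity gives
\[
\m f(M)=f(\m M)=f(xM)=x f(M).
\]
Summing over all $f$ yields $\m\,\tr_R(M)=x\,\tr_R(M)$. Since $x$ is a minimal reduction of $\m$ and $\tr_R(M)$ is MCM, the characterization shows that $\tr_R(M)$ is Ulrich.

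I expect no real obstacle here. The only points requiring care are the base change to an infinite residue field, which needs the compatibility of traces with flat extension, and the degenerate case $\tr_R(M)=0$.
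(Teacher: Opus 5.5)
Your proposal is correct and follows essentially the same route as the paper. After extending to an infinite residue field, you use $R$-linearity of each $f\in M^*$ to turn $\m M=xM$ into $\m f(M)=xf(M)$, which is exactly the paper's computation $af(x)=f(ax)=f(yx')=yf(x')$. Your explicit checks that the nonzero ideal $\tr_R(M)$ is MCM and that traces and the Ulrich property are compatible with the residue field extension are details the paper leaves implicit.
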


\begin{proof}
Extending the residue field if needed, we may suppose that $|k|=\infty$. Then we may choose $y$ to be a minimal reduction for $\m$, and it suffices to show that $\m \tr_R(M) \subseteq y\tr_R(M)$. Take $a \in \m$ and let $f(x)$ be a generator for $\tr_R(M)$, so $f \in M^*$ and $x \in M$. As $M$ is Ulrich, we have $ax=yx'$ for some $x' \in M$. Then $af(x)=f(ax)=f(yx')=yf(x') \in y\tr_R(M)$, so $\tr_R(M)$ is Ulrich. 
\end{proof}

We say $R$ is a numerical semigroup ring if it is ambiguously a ring of the form $R=k[\![t^{a_1},\dots,t^{a_n}]\!]$ or $R=k[t^{a_1},\dots,t^{a_n}]$. We will work in practice with the graded variant $R=k[t^{a_1},\dots,t^{a_n}]$, and note that the properties we consider can be established suitably for $R=k[\![t^{a_1},\dots,t^{a_n}]\!]$ via completion. For a numerical semigroup ring $R$, it is well known that $e(R)=a_1$ and that $\mathfrak{c}_R=(t^{F+1},t^{F+2},\dots,t^{F+a_1})$ where $F$ is the \emph{Frobenius number} of $R$, that is, $F=\sup\{i \mid t^i \notin R\}$. We point to \cite{RG09} as a reference for these facts and other fundamental properties of numerical semigroup rings.

We will need the following well-known fact about conductor ideals. We include a short proof due to lack of a reference covering the precise statement we need.
\begin{prop}\label{conductorfact}
Suppose $R$ is analytically unramified. If $y$ is a minimal reduction of $\mathfrak{c}_R$, then $(y:\mathfrak{c}_R)=\mathfrak{c}_R$.

\end{prop}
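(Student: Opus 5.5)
We prove the two inclusions $(y:\mathfrak{c}_R)\subseteq\mathfrak{c}_R$ and $\mathfrak{c}_R\subseteq(y:\mathfrak{c}_R)$ separately. Throughout, $y$ is a nonzerodivisor: $\mathfrak{c}_R$ has positive grade when $R$ is analytically unramified, since $\bar{R}$ is then module-finite over $R$ and so $\mathfrak{c}_R$ contains a nonzerodivisor. We also use that $\mathfrak{c}_R$ is a common ideal of $R$ and $\bar{R}$.

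\emph{Step 1: $\mathfrak{c}_R$ is stable, so $\mathfrak{c}_R^2=y\mathfrak{c}_R$.} Since $y$ is a minimal reduction, $y\bar{R}\subseteq\mathfrak{c}_R$, and $\mathfrak{c}_R$ is integral over $(y)$. We need the stronger equality $\mathfrak{c}_R=y\bar{R}$, and the idea is to work inside $\bar{R}$. Because $\mathfrak{c}_R\bar{R}=\mathfrak{c}_R$, the relation $\mathfrak{c}_R\subseteq\overline{(y)}$ gives $\mathfrak{c}_R\bar{R}\subseteq\overline{y\bar{R}}$. The ring $\bar{R}$ is a finite product of normal one-dimensional domains, i.e.\ a semilocal PID-type ring (Dedekind in each component), so principal ideals there are integrally closed. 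Hence $\mathfrak{c}_R\subseteq y\bar{R}$, and so $\mathfrak{c}_R=y\bar{R}$. It follows that
\[
\mathfrak{c}_R^2=y\bar{R}\,\mathfrak{c}_R=y\mathfrak{c}_R.
\]

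\emph{Step 2: $\mathfrak{c}_R\subseteq(y:\mathfrak{c}_R)$.} This is immediate from Step 1, since $\mathfrak{c}_R\cdot\mathfrak{c}_R=y\mathfrak{c}_R\subseteq(y)$.

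\emph{Step 3: $(y:\mathfrak{c}_R)\subseteq\mathfrak{c}_R$.} Let $r\in R$ with $r\mathfrak{c}_R\subseteq yR$. By Step 1, $r y\bar{R}\subseteq yR$. Cancelling the nonzerodivisor $y$ in $Q(R)$ gives $r\bar{R}\subseteq R$, so $r\in(R:_{Q(R)}\bar{R})=\mathfrak{c}_R$.

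\emph{Main obstacle.} The real work is in Step 1, namely justifying $\mathfrak{c}_R=y\bar{R}$. This requires:
\begin{itemize}
\item that $\bar{R}$ is finite over $R$ and is a semilocal principal ideal ring, which follows from analytic unramifiedness;
\item that integral closure of ideals is compatible with passing to $\bar{R}$.
\end{itemize}
If the residue field needs extending to guarantee that a minimal reduction exists, we would reduce to that case by a faithfully flat extension $R\to R[X]_{\m R[X]}$, which preserves conductors and colons.
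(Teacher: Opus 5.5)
Your proof is correct and follows essentially the paper's route: both arguments rest on the equality $\mathfrak{c}_R=y\bar{R}$ (the paper writes $y\bar{R}=\mathfrak{c}_R\bar{R}=\mathfrak{c}_R$). The paper then reads off $(y:\mathfrak{c}_R)=y(R:_{Q(R)}\mathfrak{c}_R)=y\bar{R}$ from $(R:_{Q(R)}\mathfrak{c}_R)=\bar{R}$, whereas you check the two inclusions directly, which also justifies steps the paper leaves implicit. (Step 1 does not actually need $\bar{R}$ to be a semilocal PID or $\dim R=1$; it suffices that $\bar{R}$ is integrally closed in $Q(R)$, so that $\overline{y\bar{R}}=y\bar{R}$ for the nonzerodivisor $y$.)
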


\begin{proof}
We have $(R:_{Q(R)} \mathfrak{c})=(R:_{Q(R)} (R:_{Q(R)} \bar{R}))=\bar{R}$. Then $y(R:_{Q(R)} \mathfrak{c}_R)=y\bar{R}=\mathfrak{c}_R\bar{R}=\mathfrak{c}_R$. The claim follows as $y(R:_{Q(R)} \mathfrak{c}_R)=(y:\mathfrak{c}_R)$.
\end{proof}

\section{{When taking syzygies induces containment of traces: $\tr_R(M) \subseteq \tr_R(\Omega^1_R(M))$}}\label{mainresultssection}
This section contains our main results regarding the behavior of trace ideals under taking syzygies. We begin with the following example indicating that there need not be any direct containment between $\tr_R(M)$ and $\tr_R(\Omega^1_R(M))$ in general:
\begin{example}\label{incomparable}
Let $R=k[\![x,y]\!]/(xy)$, and let $I=(x)$. Then $\tr_R(I)=I$ and $\tr_R(\Omega^1_R(I))=(y)$. In particular, $\tr_R(I)$ and $\tr_R(\Omega^1_R(I))$ are incomparable.
\end{example}

\begin{proof}
If $f:(x) \to R$, and if $y \in (x)$, then $yf(x)=f(yx)=f(0)=0$. So $f(x) \in (0:y)=(x)$. It follows that $I$ is a trace ideal. We have $\Omega^1_R((x)) \cong \Omega^1_R(R/(y)) \cong (y)$. By symmetry, $(y)$ is also a trace ideal, so $\tr_R(\Omega^1_R((x)))=(y)$, as claimed.
\end{proof}

We next note the following proposition that, while elementary, highlights a key point that we will return to frequently.
\begin{prop}\label{I1colon}
Suppose $R$ is a local ring and let $M$ be an $R$-module with minimal generating set $x_1,\dots,x_n$. If $A$ is a minimal presentation matrix for $M$, then 
\[I_1(A)=\sum_{i=1}^n ((x_1,\dots,\hat{x_i},\dots,x_n): x_i).\]
\end{prop}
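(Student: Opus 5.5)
The plan is to prove both inclusions directly by unwinding what the entries of a minimal presentation matrix are. Let $p\colon R^{\oplus n}\to M$ send the standard basis vector $e_i$ to $x_i$. Then $A$ is an $n\times m$ matrix whose columns form a generating set of $\ker p$, so each column $(a_{1j},\dots,a_{nj})^T$ is a relation $\sum_i a_{ij}x_i=0$. The statement does not depend on the choice of minimal presentation. Any two choices differ by automorphisms of the free modules, and $I_1(A)$ is unchanged under $A\mapsto PAQ$ for invertible $P,Q$. The right-hand side, however, refers to the fixed generating set $x_1,\dots,x_n$. So I will use the presentation adapted to that set, which is exactly the setup above.

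For the inclusion $I_1(A)\subseteq \sum_i ((x_1,\dots,\hat{x_i},\dots,x_n):x_i)$, take an entry $a_{ij}$ of $A$. The relation of column $j$ gives
\[
a_{ij}x_i=-\sum_{l\neq i}a_{lj}x_l\in (x_1,\dots,\hat{x_i},\dots,x_n)M,
\]
so $a_{ij}\in ((x_1,\dots,\hat{x_i},\dots,x_n):x_i)$. Here the colon is read as $\{r\in R : rx_i\in \sum_{l\ne i}Rx_l\}$, the colon of submodules of $M$. Since $I_1(A)$ is generated by the entries $a_{ij}$, this inclusion follows.

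For the reverse inclusion, take $r\in ((x_1,\dots,\hat{x_i},\dots,x_n):x_i)$. Then $rx_i=\sum_{l\neq i}r_lx_l$ for some $r_l\in R$, so the vector $v=re_i-\sum_{l\ne i}r_le_l$ lies in $\ker p=\operatorname{im}A$. Write $v=A w$ for some $w\in R^{\oplus m}$. Each coordinate of $v$ is then an $R$-linear combination of entries of $A$. In particular its $i$-th coordinate $r$ lies in $I_1(A)$, which gives the reverse inclusion.

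There is no serious obstacle; the argument does not even use minimality, only that the columns of $A$ generate the relations on $x_1,\dots,x_n$. The one point needing care is the interpretation of the colon as a colon of submodules of $M$ rather than of ideals. I would state this explicitly and note that it is what the later applications use when $M=I$ is an ideal with generators $x_i$.
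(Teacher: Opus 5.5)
Your proof is correct and is essentially the paper's argument: every relation vector (in particular every column of $A$) has its $i$-th entry in the $i$-th colon, and conversely an element of a colon yields a relation vector in $\ker p=\im A$, so its coordinates lie in $I_1(A)$. One small notational fix: in the first inclusion write $\sum_{l\ne i}Rx_l$ rather than $(x_1,\dots,\hat{x_i},\dots,x_n)M$, since for $M=I$ an ideal the latter reads as a product of ideals (your clarifying sentence already makes the intended meaning clear).
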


\begin{proof}
If $\sum^n_{i=1} r_ie_i \in \Omega^1_R(M)$ then $\sum^n_{i=1} r_ix_i=0$ so $r_i \in ((x_1,\dots,\hat{x_i},\dots,x_n):x_i)$ for each $i$. In particular, $I_1(A) \subseteq ((x_1,\dots,\hat{x_i},\dots,x_n): x_i)$. On the other hand, if $s_i \in ((x_1,\dots,\hat{x_i},\dots,x_n): x_i)$ then we may write $s_ix_i=\sum_{j \ne i} s_jx_j$ for some elements $s_j \in R$. So $s_ie_i-\sum_{j \ne i} s_je_j \in \Omega^1_R(M)$ which forces $s_i \in I_1(A)$.
\end{proof}

The next result gives a slight sharpening of \cite[Proposition 2.5]{CD25}:

\begin{prop}\label{idealsyztrace}
Suppose $R$ is a local ring and $M$ a finitely generated $R$-module with minimal generating set $x_1,\dots,x_n$ and with corresponding minimal presentation matrix $A$. Then

\[I_1(A) \subseteq \sum_{i=1}^n \tr_R((x_1,\dots,\hat{x}_i,\dots,x_n):x_i) \subseteq \tr_R(\Omega^1_R(M)).\]
If $\Ext^1_R(M,R)=0$, then these containments are all equalities.

\end{prop}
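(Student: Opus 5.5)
The plan is to identify each colon ideal $J_i:=((x_1,\dots,\hat{x}_i,\dots,x_n):x_i)$ with the image of a coordinate projection restricted to $\Omega^1_R(M)$. Everything then follows from \Cref{I1colon} and the basic trace facts in \Cref{traceprop}.

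Realize $\Omega^1_R(M)$ as the kernel of the surjection $R^{\oplus n}\to M$, $e_i\mapsto x_i$, so that
\[\Omega^1_R(M)=\Big\{\textstyle\sum_i r_ie_i : \sum_i r_ix_i=0\Big\}.\]
Let $\pi_i:R^{\oplus n}\to R$ be the $i$-th coordinate projection and set $p_i:=\pi_i|_{\Omega^1_R(M)}\in\Omega^1_R(M)^*$. The argument in the proof of \Cref{I1colon} shows exactly that $p_i(\Omega^1_R(M))=J_i$:
\begin{itemize}
\item if $\sum_j r_jx_j=0$, then $r_i\in J_i$;
\item conversely, if $s\in J_i$, write $sx_i=\sum_{j\neq i}s_jx_j$, so that $se_i-\sum_{j\ne i}s_je_j\in\Omega^1_R(M)$ maps to $s$ under $p_i$.
\end{itemize}

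For the first containment, \Cref{I1colon} gives $I_1(A)=\sum_i J_i$. By \Cref{traceprop}(1), $J_i\subseteq\tr_R(J_i)$ for each $i$, so $I_1(A)\subseteq\sum_i\tr_R(J_i)$.

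For the second containment, $p_i$ is a surjection $\Omega^1_R(M)\twoheadrightarrow J_i$, so $\Omega^1_R(M)$ generates $J_i$. \Cref{traceprop}(2) then gives $\tr_R(J_i)\subseteq\tr_R(\Omega^1_R(M))$, and summing over $i$ gives the second containment.

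For the equality statement, suppose $\Ext^1_R(M,R)=0$. Dualizing $0\to\Omega^1_R(M)\to R^{\oplus n}\to M\to 0$ gives an exact sequence
\[(R^{\oplus n})^*\to\Omega^1_R(M)^*\to\Ext^1_R(M,R)=0.\]
Hence every $f\in\Omega^1_R(M)^*$ is the restriction of some map $R^{\oplus n}\to R$, that is, $f=\sum_i r_ip_i$ with $r_i\in R$. Therefore
\[f(\Omega^1_R(M))\subseteq\sum_i p_i(\Omega^1_R(M))=\sum_i J_i=I_1(A),\]
which gives $\tr_R(\Omega^1_R(M))\subseteq I_1(A)$. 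Combined with the chain of containments already proved, all three ideals coincide.

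The only real content is the identification $p_i(\Omega^1_R(M))=J_i$, and that is already done in \Cref{I1colon}. I expect no serious obstacle; the one point to state explicitly is that the vanishing of $\Ext^1$ is what makes every functional on the syzygy extend to $R^{\oplus n}$.
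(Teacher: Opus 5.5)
Your proof is correct. The two containments are argued exactly as in the paper: \Cref{I1colon} gives $I_1(A)=\sum_i J_i$ with $J_i=((x_1,\dots,\hat{x}_i,\dots,x_n):x_i)$, and the restricted projection $p_i:\Omega^1_R(M)\twoheadrightarrow J_i$ together with \Cref{traceprop} (1) and (2) gives the rest.

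The equality case is where your route differs. The paper extends the presentation to $R^{\oplus \ell}\xrightarrow{B}R^{\oplus m}\xrightarrow{A}R^{\oplus n}$ and uses $\Ext^1_R(M,R)=0$ to get exactness of the dual, i.e.\ $\ker(B^T)=\im(A^T)$. It then applies the matrix description of trace in \Cref{traceprop} (6) to the presentation $B$ of $\Omega^1_R(M)$, which gives $\tr_R(\Omega^1_R(M))=I_1(A^T)=I_1(A)$.

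Your argument is the coordinate-free form of the same fact. $\Ext^1_R(M,R)=0$ says every functional on $\Omega^1_R(M)$ extends to $R^{\oplus n}$, hence is an $R$-combination of the $p_i$. Under the identification $\Omega^1_R(M)^*\cong\ker(B^T)$, these $p_i$ are exactly the columns of $A^T$. What your version gains is self-containment: it does not rely on the cited description in \Cref{traceprop} (6). It also avoids the tacit point that the columns of $A^T$ need not generate $\ker(B^T)$ minimally, which is harmless only because $I_1$ does not depend on the choice of generating set.
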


\begin{proof} 

From \Cref{I1colon}, we have $I_1(A)=\sum^n_{i=1} ((x_1,\dots,\hat{x}_i,\dots,x_n):x_i)$, and so $I_1(A) \subseteq \sum^n_{i=1} \tr_R((x_1,\dots,\hat{x}_i,\dots,x_n:x_i)$ from \Cref{traceprop} (1).

For the second containment, let $p_i:R^{\oplus n} \to R$ denote projection onto the $i$th component, and let $f_i:\Omega^1_R(M) \to R$ be the restriction of $p_i$ to $\Omega^1_R(M)$. Then by definition of $\Omega^1_R(M)$, we have $p_i:\Omega^1_R(M) \to (x_1,\dots,\hat{x}_i,\dots,x_n):x_i)$. Then from \Cref{traceprop} (2), we see that $\tr_R((x_1,\dots,\hat{x}_i,\dots,x_n):x_i)) \subseteq \tr_R(\Omega^1_R(M))$ for all $i$, and so $\sum_{i=1}^n \tr_R((x_1,\dots,\hat{x}_i,\dots,x_n):x_i) \subseteq \tr_R(\Omega^1_R(M))$. 

If $\Ext^1_R(M,R)=0$, then let $B$ be a minimal presentation matrix for $\Omega^1_R(M)$ so that there is an exact sequence
\[R^{\oplus \ell} \xrightarrow{B} R^{\oplus m} \xrightarrow{A} R^{\oplus n} \xrightarrow{p} M \rightarrow 0.\]
Applying $(-)^*$, then since $\Ext^1_R(M,R)=0$, we have an exact sequence 
\[0 \rightarrow M^* \rightarrow R^{\oplus n} \xrightarrow{A^T} R^{\oplus m} \xrightarrow{B^T} R^{\oplus l} \rightarrow \Tr_R(\Omega^1_R(M)) \rightarrow 0,\]
and it follows from \Cref{traceprop} (6) that $I_1(A^T)=I_1(A)=\tr_R(\Omega^1_R(M))$.

\end{proof}

%The following Corollary gives a variation on \cite[Proposition 1.7]{EF25}:

As an immediate consequence we have the following, which in particular recovers results of Dey-Dutta (see \cite[Proposition 4.1 and Lemma 4.3]{DD23}):

\begin{cor}\label{tracesyzidealcor}
Suppose $I$ is an ideal in $R$ of positive grade. Then $\tr_R(I) \subseteq I_1(A) \subseteq  \tr_R(\Omega^1_R(I))$. 
\end{cor}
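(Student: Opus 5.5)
The second containment $I_1(A) \subseteq \tr_R(\Omega^1_R(I))$ is exactly the outer containment of \Cref{idealsyztrace} applied to $M=I$. So the work lies in the first containment $\tr_R(I) \subseteq I_1(A)$, where $A$ is a minimal presentation matrix of $I$ with respect to a minimal generating set $x_1,\dots,x_n$.

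Since $I$ has positive grade, \Cref{traceprop} (8) lets us choose a minimal generating set $x_1,\dots,x_n$ consisting of nonzerodivisors. For this set,
\[\tr_R(I)=\sum_{i=1}^n (x_i:I).\]
Minimal presentation matrices are unique up to invertible changes of basis, and such changes do not affect $I_1(A)$. So we may compute $I_1(A)$ using this particular generating set, and \Cref{I1colon} gives
\[I_1(A)=\sum_{i=1}^n ((x_1,\dots,\hat{x}_i,\dots,x_n):x_i).\]

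It therefore suffices to show $(x_i:I)\subseteq (x_j:x_i)$ for some $j\neq i$, which requires $n\ge 2$. Take $r\in(x_i:I)$ and any $j\ne i$. Then $rx_j\in(x_i)$, say $rx_j=sx_i$. We want $r$ itself to lie in $((x_1,\dots,\hat x_i,\dots,x_n):x_i)$, that is, $rx_i\in(x_j,\ldots)$.

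Use the symmetry in the colon instead. Since $rI\subseteq (x_i)$, we have $r x_i \cdot x_j = x_i\,(r x_j)$, so $r x_j = s x_i$ with $s=rx_j/x_i$; multiplying the desired relation by $x_j$ and cancelling (legitimate because $x_j$ is a nonzerodivisor) shows it suffices to find the right element. The cleaner route is this: $rx_j\in(x_i)$ says $r\in((x_i):x_j)$, and relabelling the roles of $i$ and $j$ places $r$ in the $j$-th summand $((x_1,\dots,\hat{x}_j,\dots,x_n):x_j)$. Hence $(x_i:I)\subseteq ((x_i):x_j)\subseteq ((x_1,\dots,\hat x_j,\dots,x_n):x_j)\subseteq I_1(A)$. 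Summing over $i$ gives $\tr_R(I)\subseteq I_1(A)$.

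The remaining case is $n=1$, where $I=(x)$ is principal generated by a nonzerodivisor. Then $I\cong R$, so $\tr_R(I)=R$, while $A$ is the zero (empty) matrix. Here the statement must be read with the standard convention for free modules, or the principal case excluded: a free $I$ has $\Omega^1_R(I)=0$, so the containment is degenerate. I expect handling this degenerate principal case, and checking that $I_1(A)$ is independent of the chosen minimal generating set, to be the only real subtleties. The main step is just the index swap above.
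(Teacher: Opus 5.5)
This is correct and is essentially the paper's proof. You choose a minimal generating set of nonzerodivisors, write $\tr_R(I)=\sum_i (x_i:I)$ by \Cref{traceprop} (8), push each $(x_i:I)\subseteq (x_i:x_j)$ into the $j$-th summand $((x_1,\dots,\hat{x}_j,\dots,x_n):x_j)$ of $I_1(A)$ from \Cref{I1colon}, and finish with \Cref{idealsyztrace}. Delete your abandoned first attempt, since $(x_i:I)\subseteq(x_j:x_i)$ fails in general (e.g.\ for $(t^2,t^3)$ in $k[t^2,t^3]$). Your caveat about the principal case is right and is a point the paper glosses over: for $I=(x)$ with $x$ a nonzerodivisor, $\tr_R(I)=R$ but $I_1(A)=0=\tr_R(\Omega^1_R(I))$. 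So the statement needs $I$ non-principal, which the paper's proof uses tacitly when it picks $j\neq i$.
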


\begin{proof}
By prime avoidance, we may choose a minimal generating set $x_1,\dots,x_n$ for $I$ where each $x_i$ is a nonzerodivisor. Then from \Cref{traceprop} (8), we have \[\tr_R(I)=\sum^n_{i=1} (x_i:I)=\sum^n_{i=1} \bigcap_{j \ne i} (x_i:x_j) \subseteq \sum^n_{i=1} (x_1,\dots,\hat{x}_i,\dots,x_n:x_i).\]
Combining \Cref{I1colon} and \Cref{idealsyztrace} then gives the claim.
\end{proof}

The following shows that, in the setting of \Cref{idealsyztrace}, we do not have $\tr_R(\Omega^1_R(M))=I_1(A)$ in general.

\begin{example}\label{assumptionsneeded}
Let $R=k[t^5,t^9,t^{11},t^{17}]$ and take $I=(t^9,t^{11})$. Then $I_1(A)=\tr_R(I) \subsetneq \tr_R(\Omega^1_R(I))$.
\end{example}

\begin{proof}
From \Cref{I1colon}, we have $I_1(A)=(t^9:t^{11})+(t^{11}:t^9)$, which equals $\tr_R(I)$ by \Cref{traceprop} (8). We have $I^* \cong (t^9:t^{11})$ by \cite[Lemma 3.3]{HH05}, and we see directly that $(t^9,t^{15},t^{16},t^{17}) \subseteq (t^9:t^{11})$.  
The only $t^a$ in $R$ that are not in $(t^9,t^{15},t^{16},t^{17})$ are $1,t^5,t^{10}$, and $t^{11}$. We check directly that each of these is not in $(t^9:t^{11})$, so $(t^9:t^{11})=(t^9,t^{15},t^{16},t^{17})$. By a similar calculation, we have $(t^{11}:t^{9})=(t^{11},t^{17},t^{18},t^{19})$, and so $\tr_R(I)=(t^9,t^{11},t^{15},t^{17})$. But we observe that $t^{10} \in (t^9:(t^9:I))=(t^9:(t^9,t^{15},t^{16},t^{17}))$, and then by \Cref{traceprop} (8), we have $t^{10} \in \tr_R(I^*)$. By \Cref{traceprop} (4), we have $\tr_R(I) \subsetneq \tr_R(I^*)=\tr_R(\Omega^1_R(I))$, as claimed. 
\end{proof}

\Cref{idealsyztrace} and \Cref{tracesyzidealcor} give lower estimate for the trace ideal of a syzygy. For homogeneous ideals in numerical semigroup rings, we also have a upper estimate:
\begin{prop}\label{containedinsumtrace}
Suppose $R=k[t^{a_1},\dots,t^{a_n}]$ is a numerical semigroup ring, and let $I$ be a homogeneous ideal with minimal homogeneous generating set $x_1,\dots,x_m$ and minimal homogeneous presentation matrix $A$. Then 
\[I_1(A)=\sum_{j=1}^m \sum_{i \ne j} (x_i:x_j) \subseteq \tr_R(\Omega^1_R(I)) \subseteq \sum_{j=2}^m \sum_{i<j} \tr_R(x_i:x_j)\] with equalities if each of the two-generated ideals $(x_i,x_j)$ for $i \ne j$ is reflexive, e.g. if $R$ is Gorenstein.
\end{prop}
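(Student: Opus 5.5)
The plan is to exploit the fact that everything here is monomial. Write $x_i=t^{b_i}$. In $R=k[t^{a_1},\dots,t^{a_n}]$ every homogeneous ideal is spanned over $k$ by the monomials it contains. Hence the colon of a monomial ideal by a monomial is again monomial. Moreover, $t^c x_j\in(x_1,\dots,\hat x_j,\dots,x_m)$ holds if and only if $t^{c+b_j}\in (x_i)$ for some $i\ne j$, that is, $t^c\in (x_i:x_j)$. Consequently $((x_1,\dots,\hat x_j,\dots,x_m):x_j)=\sum_{i\ne j}(x_i:x_j)$. Plugging this into \Cref{I1colon} gives the stated formula for $I_1(A)$. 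The first containment $I_1(A)\subseteq \tr_R(\Omega^1_R(I))$ is then exactly the first chain in \Cref{idealsyztrace}.

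For the upper bound I will show that $\Omega^1_R(I)$ is generated by the syzygies of the pairs $(x_i,x_j)$. Grade $R^{\oplus m}$ by $|e_i|=b_i$, so that $\Omega:=\Omega^1_R(I)$ is a graded submodule. Its degree $d$ component consists of the vectors $\sum_i r_i t^{d-b_i}e_i$, where $r_i\in k$ and $r_i=0$ unless $d-b_i$ lies in the semigroup, subject to $\sum_i r_i=0$. Every such vector is a $k$-linear combination of the differences $t^{d-b_i}e_i-t^{d-b_j}e_j$. Each of these differences lies in the image of the natural map $\Omega^1_R((x_i,x_j))\to R^{\oplus m}$ that sends the two coordinates to positions $i$ and $j$.

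This gives a surjection $\bigoplus_{i<j}\Omega^1_R((x_i,x_j))\twoheadrightarrow \Omega$. By \Cref{traceprop} (2) and (3), it follows that $\tr_R(\Omega)\subseteq\sum_{i<j}\tr_R(\Omega^1_R((x_i,x_j)))$. Since $R$ is a domain, projection onto the second coordinate gives an isomorphism $\Omega^1_R((x_i,x_j))\cong (x_i:x_j)$. Injectivity holds because $x_i$ is a nonzerodivisor, so the second coordinate determines the first. Surjectivity holds because $bx_j=ax_i$ for every $b\in(x_i:x_j)$. This yields the upper containment.

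For the equality statement, assume each $J=(x_i,x_j)$ is reflexive. By \cite[Lemma 3.3]{HH05} we have $J^*\cong(x_i:x_j)$. Applying \Cref{traceprop} (4) gives $\tr_R(x_i:x_j)=\tr_R(J^*)=\tr_R(J)$. By \Cref{traceprop} (8), $\tr_R(J)=(x_i:J)+(x_j:J)=(x_i:x_j)+(x_j:x_i)$. Summing over $i<j$, the upper bound becomes $\sum_j\sum_{i\ne j}(x_i:x_j)=I_1(A)$, so all containments are equalities. In the Gorenstein case, $R$ is a one-dimensional Gorenstein domain. Every nonzero ideal is then MCM and hence reflexive, so the hypothesis holds automatically.

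I expect the main obstacle to be the step showing that $\Omega$ is generated by pairwise binomial syzygies. One must handle the grading carefully, since distinct $b_i$ may coincide or $d-b_i$ may fall outside the semigroup. Even so, the argument above is just linear algebra in each graded component, so it should go through.
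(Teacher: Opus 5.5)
Your proof is correct and has the same structure as the paper's. You use the colon decomposition $((x_1,\dots,\hat x_j,\dots,x_m):x_j)=\sum_{i\ne j}(x_i:x_j)$ together with \Cref{I1colon} and \Cref{idealsyztrace} for the lower bound, a surjection from the pairwise syzygy modules $\Omega^1_R((x_i,x_j))\cong(x_i:x_j)$ onto $\Omega^1_R(I)$ for the upper bound, and \Cref{traceprop} (4) and (8) for equality. The only differences are that you prove directly, by graded linear algebra, the two facts the paper cites from \cite{lyle2024annihilators}, and that your surjection is indexed by $i<j$ from the start, so you never need the isomorphism $(x_i:x_j)\cong(x_j:x_i)$ to collapse the double sum.
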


\begin{proof}
From \cite[Lemma 3.19]{lyle2024annihilators} we have $((x_1,\dots,\hat{x}_j,\dots,x_n):x_j)=\sum_{i \ne j} (x_i:x_j)$, and so $\sum_{j=1}^m \sum_{i \ne j} (x_i:x_j) \subseteq \tr_R(\Omega^1_R(I))$ from \Cref{idealsyztrace}.

It follows from \cite[Lemma 3.21 (2)]{lyle2024annihilators} that there is a surjection $\bigoplus^m_{j=1} \bigoplus_{i \ne j} (x_i:x_j) \twoheadrightarrow \Omega^1_R(I)$. That $\tr_R(\Omega^1_R(I)) \subseteq \sum_{j=1}^m \sum_{i \ne j} \tr_R(x_i:x_j)$ then follows from \Cref{traceprop} (2) and (3). We note that $(x_i:x_j)=(x_i:(x_i,x_j)) \cong (x_i,x_j)^* \cong (x_j:(x_i,x_j))=(x_j:x_i)$ from \cite[Lemmas 2.4.1-2.4.3]{HS06}, so $\tr_R(x_i:x_j)=\tr_R(x_j:x_i)$. It follows that $\sum_{j=1}^m \sum_{i \ne j} \tr_R(x_i:x_j)=\sum_{j=2}^m \sum_{i<j} \tr_R(x_i:x_j)$.

If each $(x_i,x_j)$ with $i \ne j$ is reflexive, then as above, we have $(x_i:x_j) \cong (x_i,x_j)^*$, and it follows from \Cref{traceprop} (4) and (8) that $\tr_R(x_i:x_j)=\tr_R(x_i,x_j)=(x_i:x_j)+(x_j:x_i)$ for each $i \ne j$. We thus see that $\sum_{j=1}^m \sum_{i \ne j} (x_i:x_j)=\sum_{j=2}^m \sum_{i \ne j} \tr_R(x_i:x_j)$, completing the proof.
\end{proof}

In light of \Cref{idealsyztrace} it is natural to ask when we can expect $\tr_R(M) \subseteq \tr_R(\Omega^1_R(M))$ for any $R$-module $M$, and moreover when we will have $\tr_R(M)=\tr_R(\Omega^1_R(M))$. A key obstruction to the first is the fact that $\Omega^1_R(M)=\Omega^1_R(M \oplus R)$, so we restrict our consideration to modules without free summands. Even still, these conditions turn out to be exceedingly strong. The remainder of this section is devoted to characterizing this behavior for numerical semigroup rings. In this endeavor we need the following, of independent interest, which we will use to describe the form of $\Omega^1_R(M)$ when $M$ is Ulrich:

\begin{theorem}\label{burchthm}
Suppose $R=k[t^{a_1},\dots,t^{a_n}]$ is a numerical semigroup ring and let $M$ be a homogeneous torsion-free $R$-module with minimal homogeneous generating set $x_1,\dots,x_m$. If for some $i$, we have $\m x_i \subseteq t^{a_1}M$, then $((x_1,\dots,\hat{x}_i,\dots,x_n):x_i)=(t^{d_ia_1},t^{a_2},\dots,t^{a_n})$ for some $d_i$. In particular, $\tr_R(\Omega^1_R(M))=(t^{va_1},t^{a_2},\dots,t^{a_n})$ for some $v \le d_i$.
\end{theorem}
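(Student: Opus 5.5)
Write $J_i:=((x_1,\dots,\hat{x}_i,\dots,x_m):x_i)$, the colon ideal in the statement. It is a homogeneous ideal of $R$, since $M$ and the $x_j$ are homogeneous. Because the $x_j$ form a minimal generating set, $J_i \subseteq \m$. The plan is to show two things. First, $J_i$ contains $t^{a_2},\dots,t^{a_n}$. Second, the only monomials of $R$ that $J_i$ can miss are powers of $t^{a_1}$. Together these pin $J_i$ down to the stated form.

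\textbf{Step 1: $t^{a_j}\in J_i$ for $j\ge 2$.} By hypothesis $t^{a_j}x_i \in t^{a_1}M$, so we can write $t^{a_j}x_i = t^{a_1}\sum_l r_l x_l$ with homogeneous $r_l\in R$. Rearranging gives $(t^{a_j}-t^{a_1}r_i)x_i \in (x_l : l\ne i)$.
\begin{itemize}
\item If $r_i\in\m$, then $t^{a_1}r_i$ lies in $\m^2$ and has the same degree as $t^{a_j}$. Since $t^{a_j}$ is a minimal generator of $\m$, it is not in $\m^2$, so $t^{a_1}r_i$ must be $0$, i.e.\ $r_i=0$. Hence $t^{a_j}\in J_i$.
\item If $r_i$ is a unit, it has degree $0$, so $a_j=a_1$. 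This contradicts minimality of the generators of $\m$.
\end{itemize}
Degrees require care here: $r_i$ is homogeneous of degree $a_j-a_1+|x_i|-|x_i|=a_j-a_1$. In either case $t^{a_j}\in J_i$ for every $j\ge 2$.

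\textbf{Step 2: the remaining monomials are powers of $t^{a_1}$.} $J_i$ is a monomial ideal (homogeneous ideals of $R$ are spanned by monomials $t^s$, since each graded piece is at most one-dimensional). By Step 1, $J_i \supseteq (t^{a_2},\dots,t^{a_n})$. Every monomial of $\m$ outside $(t^{a_2},\dots,t^{a_n})$ is a power $t^{ka_1}$ with $k\ge 1$. The set of $k$ with $t^{ka_1}\in J_i$ is upward closed. It is nonempty because $M$ is torsion-free of finite rank, so $J_i\ne 0$: pick any nonzero monomial in $J_i$; after multiplying by a suitable element, some $t^{ka_1}$ lands in it. Alternatively, use that the conductor $t^{F+1}\bar R$ times a high power of $t^{a_1}$ eventually lies in $(t^{a_2},\dots)$ plus $J_i$. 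Let $d_i$ be the least such $k$. Then $J_i=(t^{d_ia_1},t^{a_2},\dots,t^{a_n})$. I expect this nonemptiness/finiteness point to be the only slightly delicate step; if needed, I would argue instead that any nonzero ideal of $R$ contains $t^{s}$ for all $s\gg0$, since $R$ is one-dimensional and $\bar R/R$ has finite length.

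\textbf{Step 3: the trace.} By \Cref{idealsyztrace} and \Cref{traceprop} (1),
\[ J_i \subseteq \tr_R(J_i) \subseteq \tr_R(\Omega^1_R(M)). \]
So $\tr_R(\Omega^1_R(M))$ is a homogeneous ideal containing $(t^{d_ia_1},t^{a_2},\dots,t^{a_n})$. It is proper: if it equaled $R$, then by \Cref{traceprop} (5) $\Omega^1_R(M)$ would have a free summand, contradicting minimality of the presentation (its entries lie in $\m$). Applying the same monomial argument as in Step 2 shows that any proper homogeneous ideal containing $(t^{a_2},\dots,t^{a_n})$ has the form $(t^{va_1},t^{a_2},\dots,t^{a_n})$. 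Here $v$ is the least $k$ with $t^{ka_1}$ in the ideal, so $v\le d_i$.
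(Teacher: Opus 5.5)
Your proof is correct in substance and is essentially the paper's argument. Step 1 is the same degree comparison on the coefficient of $x_i$; you use $t^{a_j}\notin\m^2$ where the paper uses $t^{a_j}\notin(t^{a_1})$. Steps 2--3 are the paper's observation that a homogeneous ideal containing $(t^{a_2},\dots,t^{a_n})$ is determined by the least power of $t^{a_1}$ it contains, combined with \Cref{idealsyztrace}.

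Two justifications need repair.

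\textbf{$J_i\neq 0$.} This does not follow from torsion-freeness: if $x_i$ spans a free summand complementary to the other generators (e.g.\ $M$ free), then $J_i=0$. The right reason is your own Step 1. It gives $t^{a_2}\in J_i$, and $t^{ka_1}=t^{a_2}\,t^{ka_1-a_2}\in t^{a_2}R$ as soon as $ka_1-a_2>F$, where $F$ is the Frobenius number. This is exactly the paper's ``$L_i$ is $\m$-primary''. It requires $n\ge 2$, i.e.\ $R$ not a DVR, which the paper also tacitly assumes. For a DVR every torsion-free $M$ is free, so $J_i=0$ and the statement fails.

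\textbf{Properness in Step 3.} Minimality of the presentation alone does not prevent a syzygy from having a free summand: $\Omega^1_R(R/xR)\cong R$ for a nonzerodivisor $x\in\m$. So that sentence is unjustified as written. You do not need properness at all, however. The statement only asks for some $v\le d_i$, and $v=0$ covers the case $\tr_R(\Omega^1_R(M))=R$.
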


\begin{proof}
Set $L_i=((x_1,\dots,\hat{x_i},\dots,x_m):x_i)$. For any $j \ge 2$, we have $t^{a_j}x_i \in t^{a_1}M$. It follows that there are homogeneous elements $c_1,\dots,c_m \in (t^{a_1})$ with $t^{a_j}x_i=\sum^m_{\ell=1} c_{\ell}x_{\ell}$, and we suppose moreover that each $c_{\ell}$ is either $0$ or has $|c_{\ell}|+|x_{\ell}|=a_j+|x_i|$. In particular, $|c_i|=a_j$. We have $(t^{a_j}-c_i)x_i=\sum_{\ell \ne i} c_{\ell}x_{\ell}$. Since $|c_i|=a_j$, $t^{a_j}-c_i=rt^{a_j}$ for some $r \in k$. From above, we have $rt^{a_j} \in L_i$. If $r=0$, then we would have $t^{a_j}=c_i \in (t^{a_1})$, which is not the case, so $r$ is a unit, and it follows that $t^{a_j} \in L_i$. As this holds for all $j$, we have $(t^{a_2},\dots,t^{a_n}) \subseteq L_i$. Since $L$ is $\m$-primary, there is a $d_i$ for which $t^{d_ia_1} \in L_i$ but where $t^{(d_i-1)a_1} \notin L_i$. But $R/L_i$ is a principal ideal ring whose ideals are uniquely determined by the smallest power of $t^{a_1}$ they contain, and it follows that $L_i=(t^{d_ia_1},t^{a_2},\dots,t^{a_n})$. From  \Cref{idealsyztrace}, we have $L_i \subseteq \tr_R(\Omega^1_R(M))$, so again as $R/L_i$ is a principal ideal ring, $\tr_R(\Omega^1_R(M))=(t^{va_1},\dots,t^{a_n})$ for some integer $1 \le v \le d_i$, as claimed.
\end{proof}

\begin{remark}
By \cite[Lemma 3.9]{DK23}, the condition that $\m x_i \subseteq t^{a_1} x_i$ for some $i$ in the setting of \Cref{burchthm} says that $M$ is a \emph{Burch submodule} of some $R$-module $X$ in the sense of \cite[Definition 3.1]{DK23}.
\end{remark}

A consequence of \Cref{burchthm} is the following:

\begin{theorem}\label{tracesyzylrich}
Suppose $R:=k[t^{a_1},\ldots,t^{a_n}]$ is a numerical semigroup ring. If $M$ is a homogeneous Ulrich module with minimal homogeneous generating set $x_1,\dots,x_m$, then for each $i$, $((x_1,\dots,\hat{x_i},\dots,x_m):x_i)=(t^{d_ia_1},t^{a_2},\dots,t^{a_n})$ for some integer $d_i$. Moreover, if $A$ is a minimal homogeneous presentation matrix for $M$, then $I_1(A)=(t^{\min\{d_1,\ldots,d_m\}},t^{a_2},\dots,t^{a_n})$. In particular, $\tr_R(\Omega^1_R(M))=(t^{da_1},\dots,t^{a_n})$ for some integer $1 \le d \le \min\{d_1,\dots,d_m\}$.
\end{theorem}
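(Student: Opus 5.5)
The plan is to reduce everything to \Cref{burchthm}. The key input is the characterization of Ulrich modules over a one-dimensional ring: $M$ is Ulrich precisely when $\m M=yM$ for a minimal reduction $y$ of $\m$. For a numerical semigroup ring $R=k[t^{a_1},\dots,t^{a_n}]$ with $a_1$ the multiplicity, $t^{a_1}$ is a minimal reduction of $\m$. Indeed, $e(R)=a_1$ and $\ell(R/t^{a_1}R)=a_1$, and in the graded setting one can take the reduction to be homogeneous, namely $t^{a_1}$, without extending the residue field.

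The first step is therefore to establish $\m M=t^{a_1}M$ for a homogeneous Ulrich module $M$. Since $M$ is maximal Cohen--Macaulay, hence torsion-free, $t^{a_1}$ is a nonzerodivisor on $M$. This gives
\[
\ell(M/t^{a_1}M)=e(t^{a_1};M)=e_R(M)\cdot \ell(R/t^{a_1}R)/e(R)=e_R(M).
\]
Being Ulrich means this equals $\mu_R(M)=\ell(M/\m M)$. Combined with $t^{a_1}M\subseteq \m M$, this forces $\m M=t^{a_1}M$. In particular $\m x_i\subseteq t^{a_1}M$ for every $i$.

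The second step is to apply \Cref{burchthm} to each index $i$. This yields $((x_1,\dots,\hat{x_i},\dots,x_m):x_i)=(t^{d_ia_1},t^{a_2},\dots,t^{a_n})$ for some $d_i$, which is the first assertion.

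Next I would compute $I_1(A)$. By \Cref{I1colon}, $I_1(A)=\sum_i ((x_1,\dots,\hat{x_i},\dots,x_m):x_i)$. Each summand contains $(t^{a_2},\dots,t^{a_n})$, and modulo this ideal the summands are the ideals generated by $t^{d_ia_1}$. The sum is therefore $(t^{\min_i d_i\, a_1},t^{a_2},\dots,t^{a_n})$. I read the exponent in the statement as $\min\{d_1,\dots,d_m\}a_1$, matching the rest of the theorem.

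Finally, for the trace, \Cref{idealsyztrace} gives $I_1(A)\subseteq \tr_R(\Omega^1_R(M))$. Every ideal containing $(t^{a_2},\dots,t^{a_n})$ has the form $(t^{va_1},t^{a_2},\dots,t^{a_n})$, since $R/(t^{a_2},\dots,t^{a_n})$ is a quotient of $k[t^{a_1}]$ and so its ideals are determined by the least power of $t^{a_1}$ they contain. Hence $\tr_R(\Omega^1_R(M))=(t^{da_1},t^{a_2},\dots,t^{a_n})$ with $d\le \min_i d_i$. Also $d\ge 1$: the trace lies in $\m$ unless $\Omega^1_R(M)$ has a free summand, and that cannot happen because a minimal syzygy sits inside $\m R^{m}$ (or $d=0$ is excluded simply by convention).

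I expect the main obstacle to be the first step. The delicate points are justifying that $t^{a_1}$ is a (homogeneous) minimal reduction of $\m$ with $\ell(R/t^{a_1})=e(R)$, and that the Ulrich condition $e_R(M)=\mu_R(M)$ converts into $\m M=t^{a_1}M$ in the graded setting. The remaining steps are formal consequences of \Cref{burchthm}, \Cref{I1colon} and \Cref{idealsyztrace}.
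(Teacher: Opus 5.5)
Your argument is the paper's: the Ulrich condition gives $\m x_i \subseteq t^{a_1}M$ for every $i$, and the rest comes from \Cref{burchthm}, \Cref{I1colon} and \Cref{idealsyztrace}. You supply details the paper leaves implicit: the multiplicity computation giving $\m M=t^{a_1}M$ without extending $k$, and the fact that every ideal containing $(t^{a_2},\dots,t^{a_n})$ has the form $(t^{va_1},t^{a_2},\dots,t^{a_n})$. You are also right that the exponent in $I_1(A)$ should read $\min_i d_i\cdot a_1$.

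The one step that does not hold up as written is $d\ge 1$. This is not a convention. It says exactly that $\Omega^1_R(M)$ has no nonzero free summand. The fact that a minimal syzygy sits inside $\m R^{m}$ does not imply this. For $0\ne x\in\m$ one has $\Omega^1_R(R/xR)=xR\cong R\subseteq \m R$.

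What rules out a free summand is that $M$ is torsion-free. Suppose $\Omega:=\Omega^1_R(M)=Rz\oplus N$ with $Rz\cong R$.
\begin{enumerate}
\item The coordinates of $z$ generate a nonzero ideal $I\subseteq\m$.
\item Pick $0\ne x\in I$ and $s\in(xR:_R\m)\setminus xR$, which exists because $\m/xR$ is associated to the Artinian module $R/xR$. Then $q:=s/x\in(R:_{Q(R)}I)\setminus R$.
\item Now $qz\in R^{m}$, and $x\cdot qz=sz\in\Omega$. So the image of $qz$ in $M$ is torsion, hence zero, and $qz\in\Omega$.
\item Write $qz=az+n$ with $a\in R$ and $n\in N$. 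Compare components in $\Omega\otimes_R Q(R)=Q(R)z\oplus(N\otimes_R Q(R))$. This gives $q=a\in R$, a contradiction.
\end{enumerate}

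Alternatively, you can use the Ulrich property with $y=t^{a_1}$. Since $y$ is regular on $M$ and $\m M=yM$, you get $\Omega/y\Omega\cong(\m/yR)^{\oplus m}$. A free summand of $\Omega$ would make $R/yR$ a direct summand of this module. Then some component $\m/yR\to R/yR$ would be a split surjection, which is impossible because $\ell(\m/yR)<\ell(R/yR)$.

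The paper's proof of \Cref{burchthm} also asserts $1\le v$ without comment, so this justification is worth including in your write-up.
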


\begin{proof}
Since $M$ is Ulrich, we have $\m x_i \subseteq t^{a_1} M$ for each $i$, and the claim follows at once from combining \Cref{I1colon} and \Cref{burchthm}.
\end{proof}

\begin{example}
Let $R=k[t^4,t^{10},t^{15}]$. The semigroup $\langle 4,10,15 \rangle$ is symmetric, so $R$ is Gorenstein, and has Frobenius number $F=21$, so $\mathfrak{c}_R=(t^{22},t^{23},t^{24},t^{25})$. As $R$ is Gorenstein, it follows from \Cref{containedinsumtrace} that 
\[\tr_R(\Omega^1_R(\mathfrak{c}_R))=((t^{23},t^{24},t^{25}):t^{22})+((t^{22},t^{24},t^{25}):t^{23})+((t^{22},t^{23},t^{25}):t^{24})+((t^{22},t^{23},t^{24}):t^{25}).\]
We calculate directly that $((t^{23},t^{24},t^{25}):t^{22})=((t^{22},t^{24},t^{25}):t^{23})=(t^{10},t^{12},t^{15})$, and that $((t^{22},t^{23},t^{25}):t^{24})=((t^{22},t^{23},t^{24}):t^{25})=(t^8,t^{10},t^{15})$. So $\tr_R(\Omega^1_R(\mathfrak{c}_R))=(t^8,t^{10},t^{15})$.
\end{example}

\begin{remark}
We note that $t^{sa_1}$ may not be a minimal generator of $(t^{sa_1},t^{a_2},\dots,t^{a_n})$, as it may be that $t^{sa_1}$ already belongs to $(t^{a_2},\dots,t^{a_n})$. 
\end{remark}

\begin{prop}\label{nofreesummand}
Suppose $R$ is a domain, let $I$ be a nonprincipal ideal in $R$, and let $A$ be a minimal presentation matrix for $I$. Then $\tr_R(\im(A^T)) \ne R$. 
\end{prop}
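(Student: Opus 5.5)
The plan is to argue by contradiction using \Cref{traceprop} (5): $\tr_R(\im(A^T))=R$ holds exactly when $R$ is a direct summand of $\im(A^T)$. Such a summand gives a surjection $g\colon \im(A^T)\twoheadrightarrow R$. I will pull $g$ back to $R^{\oplus n}$ and show that the resulting functional is a syzygy of the generators of $I$, which contradicts minimality of $A$.

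First, fix the setup. Let $x_1,\dots,x_n$ be the minimal generating set of $I$ corresponding to a minimal presentation $R^{\oplus m}\xrightarrow{A} R^{\oplus n}\xrightarrow{p} I\to 0$, where $p(e_i)=x_i$. By minimality, every entry of $A$ lies in $\m$, and the columns of $A$ generate the module of relations $\{w\in R^{\oplus n} : \sum_i w_ix_i=0\}$. Dualizing gives the exact sequence
\[0\to I^*\xrightarrow{p^*} R^{\oplus n}\xrightarrow{A^T}\im(A^T)\to 0,\]
where $p^*(f)=(f(x_1),\dots,f(x_n))$.

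Next, suppose $g\colon \im(A^T)\to R$ is surjective. The composite $g\circ A^T\colon R^{\oplus n}\to R$ is surjective, so it is given by a row vector $w=(w_1,\dots,w_n)$ with $\langle w_1,\dots,w_n\rangle=R$. Since $R$ is local (or graded with homogeneous maximal ideal $\m$), some $w_i$ is a unit. Because $g\circ A^T$ factors through $\im(A^T)$, it vanishes on $\ker(A^T)=p^*(I^*)$, so $\sum_i w_if(x_i)=0$ for every $f\in I^*$.

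The key step is to apply this to the inclusion $\iota\colon I\hookrightarrow R$, which is a nonzero element of $I^*$. This gives $\sum_i w_ix_i=0$, so $w$ is a relation on the $x_i$. Hence $w$ lies in the column span of $A$, and therefore $w\in \m R^{\oplus n}$. This contradicts the fact that some $w_i$ is a unit. So $\im(A^T)$ has no free summand, and $\tr_R(\im(A^T))\neq R$.

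The only subtle points are the following. The identification $\ker(A^T)=p^*(I^*)$ is just left exactness of $(-)^*$. Using $\iota$ requires nothing beyond $\iota\in I^*$; the domain hypothesis guarantees $I^*\neq 0$ and keeps the rank count sensible. The nonprincipal hypothesis rules out the degenerate case: if $I$ were principal, then $I\cong R$ because $R$ is a domain, so $A=0$ and $\tr_R(\im(A^T))=0\ne R$ anyway. I expect the main care to be needed in checking that a surjective functional on $R^{\oplus n}$ must have a unit coordinate. This holds in both the local and the graded settings, since in each case the only ideal not contained in $\m$ is $R$.
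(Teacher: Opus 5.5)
Your proof is correct, and its skeleton matches the paper's. A free summand of $\im(A^T)$ produces a unimodular row $w$ orthogonal to every vector $(f(x_1),\dots,f(x_n))$ with $f\in I^*$. A single injective $f$ then turns this into a relation $\sum_i w_ix_i=0$ with a unit coefficient, contradicting minimality.

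The paper handles the first step with the explicit presentation matrix $B=(f_j(x_i))$ of $\im(A^T)$ and the criterion that some row of $B$ is a combination of the others. For the second step it uses an arbitrary nonzero generator $f_1$ of $I^*$, which is injective only because $R$ is a domain.

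Your choice of the inclusion $\iota\colon I\hookrightarrow R$ is injective over any ring. So your argument never actually uses the domain or nonprincipal hypotheses; your remark giving the domain hypothesis a role is unnecessary, since $\iota\neq 0$ whenever $I\neq 0$. What you have shown is that $\im(A^T)$ has no free summand for every ideal in a local ring. More generally this holds for any torsionless module $M$, since all you need is $\bigcap_{f\in M^*}\ker f=0$. The paper's version gives, in exchange, a concrete presentation of $\im(A^T)$ in terms of generators of $I^*$.

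One small correction concerns the graded setting. There it is not true that every ideal not contained in $\m$ equals $R$; for example, $(1-t^{a_1})$ in a numerical semigroup ring is proper. So you should take $g$ homogeneous, which is possible because the trace of a graded module is generated by images of homogeneous maps. Alternatively, simply localize at $\m$.
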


\begin{proof}
Let $x_1,\dots,x_n$ be a minimal generating set for $I$ with $p:R^{\oplus n} \to I$ given by $p(e_i)=x_i$ and fitting into an exact sequence $R^{\oplus m} \xrightarrow{A} R^{\oplus n} \xrightarrow{p} I \rightarrow 0$. Let $f_1,\dots,f_s$ be a minimal generating set for $I^*$. 

There is a commutative diagram with exact rows:
% https://q.uiver.app/#q=WzAsOCxbMCwwLCIwIl0sWzEsMCwiSV4qIl0sWzIsMCwiKFJee1xcb3BsdXMgbn0pXioiXSxbMywwLCIoUl57XFxvcGx1cyBtfSleKiJdLFswLDEsIjAiXSxbMSwxLCJJXioiXSxbMiwxLCJSXntcXG9wbHVzIG59Il0sWzMsMSwiUl57XFxvcGx1cyBtfSJdLFsxLDUsIiIsMCx7InN0eWxlIjp7InRhaWwiOnsibmFtZSI6Imhvb2siLCJzaWRlIjoidG9wIn19fV0sWzIsNiwiXFxjb25nIl0sWzMsNywiXFxjb25nIl0sWzIsMywiQV4qIl0sWzYsNywiQV5UIl0sWzEsMiwicF4qIl0sWzUsNiwiQiJdLFswLDFdLFs0LDVdXQ==
\[\begin{tikzcd}[cramped]
	0 & {I^*} & {(R^{\oplus n})^*} & {(R^{\oplus m})^*} \\
	0 & {I^*} & {R^{\oplus n}} & {R^{\oplus m}}
	\arrow[from=1-1, to=1-2]
	\arrow["{p^*}", from=1-2, to=1-3]
	\arrow[equals, from=1-2, to=2-2]
	\arrow["{A^*}", from=1-3, to=1-4]
	\arrow["\alpha", from=1-3, to=2-3]
	\arrow["\beta", from=1-4, to=2-4]
	\arrow[from=2-1, to=2-2]
	\arrow["\theta", from=2-2, to=2-3]
	\arrow["{A^T}", from=2-3, to=2-4]
\end{tikzcd}\]
where $\alpha$ and $\beta$ are the isomorphisms given by $\alpha(f)= \sum^n_{i=1} f(e_i)e_i$ and $\beta(g)=\sum^m_{i=1} g(e_i)e_i$, and where $\theta$ is given by $\theta(\phi)=\sum^n_{i=1} \phi(x_i)e_i$. In particular, \[B=\begin{pmatrix} f_1(x_1) & f_2(x_1) & \cdots & f_s(x_1) \\ f_1(x_2) & f_2(x_2) & \cdots & f_s(x_2) \\ \vdots & \vdots & \ddots & \vdots \\ f_1(x_n) & f_2(x_n) & \cdots & f_s(x_n) \end{pmatrix}\]
is a presentation matrix for $\im(A^T)$. Then $R$ is a summand of $\im(A^T)$ if and only if there is a row $\begin{pmatrix} f_1(x_i) & f_2(x_i) & \cdots & f_s(x_i) \end{pmatrix}$ of $B$ which is a linear combination of the others. This in particular means that for some $i$ and $r_1,\dots,\hat{r_i},\dots, r_n \in R$, we have $f_1(x_i)=\sum_{\ell \ne i} r_{\ell}f_1(x_{\ell})=f_1(\sum_{\ell \ne i} r_{\ell}x_{\ell})$. But as $R$ is a domain, every nonzero map in $I^*$ is injective, and it follows that $x_i=\sum_{\ell \ne i} r_{\ell}x_{\ell}$, contradicting the assumption that $x_1,\dots,x_n$ is a minimal generating set for $I$. Therefore $\im(A^T)$ cannot have $R$ as a summand, and so $\tr_R(\im(A^T)) \ne R$ by \Cref{traceprop} (5).
\end{proof}

\begin{prop}\label{mpowern}
Suppose $R:=k[t^{a_1},\dots,t^{a_n}]$ is a numerical semigroup ring. Set $y:=t^{a_1}$, let $\ell$ be any integer for which $y\m^{\ell}=\m^{\ell+1}$, and let $A$ be a minimal homogenous presentation matrix for $\m^{\ell}$. Then we have the following:
\begin{enumerate}
\item[$(1)$] $\tr_R(\m^{\ell})=(y^{\ell}:\m^{\ell})$.
\item[$(2)$] $I_1(A)=\m$.
\end{enumerate}
\end{prop}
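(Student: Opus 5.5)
For (1), I would use \Cref{traceprop} (7), which gives $\tr_R(I)=(R:_{Q(R)}I)I=((x:I)I:x)$ for any nonzerodivisor $x\in I$. I apply it with $I=\m^\ell$ and $x=y^\ell$, so $\tr_R(\m^\ell)=((y^\ell:\m^\ell)\m^\ell : y^\ell)$. The point is that $y^\ell$ is a reduction of $\m^\ell$ with reduction number $1$. Indeed, from $y\m^\ell=\m^{\ell+1}$ we get $y^\ell\m^\ell=\m^{2\ell}$ by iterating.

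So write $J:=(y^\ell:\m^\ell)=y^\ell(R:_{Q(R)}\m^\ell)$. I will show $J\m^\ell=y^\ell J$. The containment $\supseteq$ is clear since $y^\ell\in\m^\ell$. For $\subseteq$, note that $\m^\ell=y^\ell S$ with $S:=\m^\ell/y^\ell=\m^\ell y^{-\ell}$ a subring of $Q(R)$; it is closed under multiplication because $\m^{2\ell}=y^\ell\m^\ell$. Then $(R:_{Q(R)}\m^\ell)=y^{-\ell}(R:_{Q(R)}S)$. Also $(R:S)$ is an $S$-module, since $(R:S)S\subseteq(R:S)$ follows from $S^2=S$. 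Hence
\[
J\m^\ell=(R:S)\,y^\ell S=y^\ell(R:S)=y^\ell J .
\]
Therefore $\tr_R(\m^\ell)=(y^\ell J:y^\ell)=J$, since $y^\ell$ is a nonzerodivisor. This gives (1).

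For (2), I would use \Cref{tracesyzylrich}. By \Cref{ulrichex} (4), or directly because $y\m^\ell=\m^{\ell+1}$ means $\m\cdot\m^\ell=y\m^\ell$, the ideal $\m^\ell$ is Ulrich (it is MCM, being an ideal in a $1$-dimensional domain). Take a minimal homogeneous generating set $x_1,\dots,x_m$ of $\m^\ell$. Then \Cref{tracesyzylrich} gives
\[
I_1(A)=(t^{d a_1},t^{a_2},\dots,t^{a_n}),\qquad d=\min_i d_i .
\]
It remains to show $d=1$, i.e.\ that $y=t^{a_1}\in L_i:=((x_1,\dots,\hat x_i,\dots,x_m):x_i)$ for some $i$.

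Here the choice of $i$ matters. Pick $x_i=t^{c}$ a minimal generator of $\m^\ell$ of largest degree $c$ among the $x_j$. Then $yx_i\in y\m^\ell=\m^{\ell+1}\subseteq\m\m^\ell$, so $yx_i=\sum_j r_jx_j$ with $r_j\in\m$ homogeneous. The coefficient $r_i$ has degree $a_1$, so it is a scalar multiple of $y$ (it could be exactly $y$); the argument above therefore does not immediately give $y\in L_i$. I would instead use the monomial structure. We have $t^{a_1+c}\in\m^{\ell+1}$, so $t^{a_1+c}=t^{b}\cdot t^{c'}$ with $t^b\in\m$ and $t^{c'}$ a monomial of $\m^\ell$. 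I want a factorization with $c'\ne c$, equivalently $t^{c'}$ avoiding the generator $x_i$ (any monomial of $\m^\ell$ is a monomial multiple of some $x_j$). Since $\m^{\ell+1}=\m^\ell\cdot\m$ is generated by products of $\ell+1$ generators $t^{a_k}$, write $t^{a_1+c}$ as such a product. If some factor $t^{a_k}$ with $k\ge2$ appears, then removing it leaves a monomial of $\m^\ell$ of degree $a_1+c-a_k<c$. That monomial lies in $(x_j: j\ne i)$ unless it equals a multiple of $x_i$, which is impossible by degree. This gives $y\in L_i$. If instead the product is $y^{\ell+1}$, then $t^{c}=y^\ell$, and I would switch to choosing $x_i=y^\ell$ directly. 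Then $y\cdot y^\ell=y^{\ell+1}\in\m^{\ell+1}=y\m^\ell$, and I would argue with $\m^{\ell+1}$ generated by elements $y x_j$ that $y^{\ell+1}$ is expressible through other generators unless $\m^\ell$ is principal (the DVR case, where the statement degenerates).

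This last combinatorial step, finding some $i$ with $y\in L_i$, is the main obstacle. What I would try first is to choose $x_i$ to be a generator not equal to $y^\ell$; such a generator exists when $R$ is not a DVR. For that generator, $yx_i\in\m^{\ell+1}$ has a factorization using some $t^{a_k}$ with $k\ge 2$. This holds because $x_i$ itself is a product of $\ell$ generators involving some $t^{a_k}$, $k\ge2$ (otherwise $x_i=y^\ell$). Hence $y\in L_i$, so $d_i=1$, and therefore $I_1(A)=\m$.
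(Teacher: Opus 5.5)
Your proof is correct. Part (2) follows essentially the paper's route. Both arguments use \Cref{tracesyzylrich} (via Ulrichness of $\m^\ell$) to get $t^{a_2},\dots,t^{a_n}\in I_1(A)$, and then place $y$ in a single colon ideal $L_i$ via \Cref{I1colon}. The paper fixes the pair $y^\ell$, $t^{a_2}y^{\ell-1}$, checks by degrees that both are minimal generators, and uses $y\cdot t^{a_2}y^{\ell-1}=t^{a_2}\cdot y^\ell$. Your argument takes an arbitrary generator $x_i\ne y^\ell$ and swaps one of its atoms $t^{a_k}$ ($k\ge 2$) for $y$, producing an element of $\m^\ell$ of degree less than $|x_i|$; this is the same idea in general form.

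Two small remarks on (2). First, the existence of $x_i\ne y^\ell$ needs its one-line justification, e.g.\ $t^{a_2}y^{\ell-1}\notin(y^\ell)$ because $a_2-a_1$ is not in the semigroup. Second, the detour you abandoned, taking $x_i=y^\ell$, would genuinely fail, since $y\notin L_i$ for that generator: for $R=k[t^3,t^4,t^5]$ and $\ell=1$ one has $((t^4,t^5):t^3)=(t^4,t^5,t^6)$. That branch only arises when $\m^\ell$ is principal, so it does no harm. As in the paper, (2) tacitly requires $R$ not to be a DVR.

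Part (1) takes a genuinely different route. The paper obtains $(y^\ell:\m^\ell)\m^\ell=y^\ell(y^\ell:\m^\ell)$ from the fact that $J=(y^\ell:\m^\ell)\cong(\m^\ell)^*$ is itself Ulrich, i.e.\ $\m J=yJ$. That rests on the cited result that duals of Ulrich modules over one-dimensional reduced rings are Ulrich (\Cref{ulrichex}(4) together with (5)).

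You instead note three things: $S=\m^\ell y^{-\ell}$ is a subring of $Q(R)$ because $\m^{2\ell}=y^\ell\m^\ell$; $J=(R:_{Q(R)}S)$; and this conductor is an $S$-module. Hence $J\m^\ell=y^\ell JS=y^\ell J$. This avoids all Ulrich machinery and shows more generally that $\tr_R(I)=(x:I)$ whenever $x\in I$ is a nonzerodivisor with $xI=I^2$. The paper's argument, in exchange, yields the stronger intermediate fact $\m J=yJ$, which it reuses in the proof of \Cref{Rhasminmult}.
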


\begin{proof}
For $(1)$, from \Cref{traceprop} (7) we have $y^{\ell}\tr_R(\m^{\ell})=(y^{\ell}:\m^{\ell})\m^{\ell}$. But from \Cref{ulrichex} (4), we have that $(y^\ell:\m^\ell)$ is Ulrich, so $(y^{\ell}:\m^{\ell})\m^{\ell}=(y^{\ell}:\m^{\ell})y^{\ell}$. Then $y^{\ell}\tr_R(\m^{\ell})=y^{\ell}(y^{\ell}:\m^{\ell})$ which forces $\tr_R(\m^{\ell})=(y^{\ell}:\m^{\ell})$. 

For $(2)$, it follows from \Cref{tracesyzylrich} that $(t^{a_2},\dots,t^{a_n}) \subseteq I_1(A)$, so it suffices to show that $y \in I_1(A)$. We claim that $y^{\ell},t^{a_2}y^{\ell-1}$ is part of a minimal generating set for $\m^\ell$. Indeed, suppose for some homogeneous $a,b \in R$ that $ay^{\ell}+bt^{a_2}y^{\ell-1} \in \m^{\ell+1}=y\m^\ell$. Then $ay^{\ell-1}+bt^{a_2}y^{\ell-2} \in \m^{\ell}$. The minimal degree of an element in $\m^{\ell}$ is $\ell a_1$, and it follows that $a \in \m$. This forces $bt^{a_2}y^{{\ell}-2} \in \m^{\ell}$. If $b$ is not in $\m$, then as it is homogeneous, it must be a unit, which would force $t^{a_2}y^{{\ell}-2} \in \m^{\ell}$. So $t^{a_2}y^{{\ell}-2}=t^{c_1}\cdots t^{c_{\ell}}$. As each $c_i \ge a_1$, we either have $c_i=a_1$ for all $i$, which would force $t^{a_2} \in (y)$, which is not the case, or else $|t^{c_1} \cdots t^{c_{\ell}}|>|t^{a_2}y^{\ell-1}|$. It follows that $b$ must be in $\m$, so $y^{\ell},t^{a_2}y^{{\ell}-1}$ form part of a minimal homogeneous generating set for $\m^{\ell}$. But then as $y(t^{a_2}y^{\ell-1})=t^{a_2}y^{\ell}$, it follows from Proposition \ref{I1colon} that $y \in I_1(A)$, so $I_1(A)=\m$. 
\end{proof}

\begin{lem}\label{Rhasminmult}
Suppose $R:=k[t^{a_1},\dots,t^{a_n}]$ is a numerical semigroup ring. If for all torsion-free $R$-modules $M$ with $\rank(M)=\mu_R(M)-1$ and for which $\tr_R(M)=\m$, we have $\tr_R(\Omega^1_R(M))=\m$, then $R$ has minimal multiplicity.
\end{lem}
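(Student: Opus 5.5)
The plan is to build, from a high power of $\m$, one test module $M$ to which the hypothesis applies, and to show that the resulting equality $\tr_R(\Omega^1_R(M))=\m$ forces $\m^2=t^{a_1}\m$.

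\emph{The test module.} Set $y=t^{a_1}$ and fix $\ell$ with $y\m^{\ell}=\m^{\ell+1}$; such $\ell$ exists by \Cref{ulrichex} (4). Let $I=\m^{\ell}$, which is not principal once $e(R)>1$; the case $e(R)=1$, where $R$ is a DVR, is trivially of minimal multiplicity. Let $R^{\oplus m}\xrightarrow{A}R^{\oplus n}\to I\to 0$ be a minimal homogeneous presentation and put $M:=\im(A^T)\subseteq R^{\oplus m}$.

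\emph{Checking the hypotheses on $M$.} The diagram in the proof of \Cref{nofreesummand} gives an exact sequence $0\to I^*\to R^{\oplus n}\to M\to 0$.
\begin{itemize}
\item $M$ is torsion-free, being a submodule of a free module.
\item $\rank(M)=n-\rank(I^*)=n-1$.
\item $\mu_R(M)=n$. Indeed $\mu_R(M)\le n$. Also $\mu_R(M)\ge \rank(M)=n-1$, and equality there would make $M$ free, contradicting \Cref{nofreesummand}.
\end{itemize}
So $\rank(M)=\mu_R(M)-1$, and $R^{\oplus n}\to M$ is a minimal cover, whence $\Omega^1_R(M)\cong I^*$.

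\emph{Computing $\tr_R(M)$.} Restricting the coordinate projections $R^{\oplus m}\to R$ to $M$ shows every entry of $A$ lies in $\tr_R(M)$. Hence $I_1(A)\subseteq\tr_R(M)$, and $I_1(A)=\m$ by \Cref{mpowern} (2). Since $\tr_R(M)\ne R$ by \Cref{nofreesummand}, we get $\tr_R(M)=\m$. The hypothesis then gives $\tr_R(I^*)=\m$.

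\emph{Identifying $\tr_R(I^*)$.} Set $B:=(R:_{Q(R)}\m^{\ell})\cong I^*$. Since $\m^{\ell}B\subseteq R$, we get $\m^{\ell}B^2\subseteq \m^{\ell}B\subseteq R$, so $B^2\subseteq B$ and $B$ is a ring containing $R$. By \Cref{traceprop} (7) applied to the fractional ideal $B$, $\tr_R(B)=(R:B)B$. Because $B$ is a ring, $(R:B)B=(R:B)$. Thus $\tr_R(I^*)=\m$ says exactly that $\m B\subseteq R$.

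\emph{Concluding minimal multiplicity.} First, $\m/y\subseteq B$, since $(\m/y)\m^{\ell}=\m^{\ell+1}/y=\m^{\ell}\subseteq R$. Therefore $\m\cdot(\m/y)\subseteq \m B\subseteq R$, i.e.\ $\m^2\subseteq yR$. All elements of $\m^2$ have degree at least $2a_1$, so the corresponding quotients by $y$ have positive degree and lie in $\m$. Hence $\m^2\subseteq y\m$, which is minimal multiplicity for a one-dimensional CM ring.

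The main obstacle I expect is the trace computation for $M$: making sure $\tr_R(M)$ is exactly $\m$. This is handled by combining \Cref{mpowern} (2) for the lower bound with \Cref{nofreesummand} for the upper bound. A secondary point needing care is the minimality of the cover, so that $\Omega^1_R(M)\cong I^*$ with no extra free summand.
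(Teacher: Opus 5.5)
There is a genuine gap in the step where you identify $\tr_R(I^*)$. Your construction of $M=\im(A^T)$, and your verification that $\rank(M)=\mu_R(M)-1$, $\tr_R(M)=\m$ and $\Omega^1_R(M)\cong(\m^\ell)^*$, are fine and follow the paper.

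\textbf{The claim that $B$ is a ring is false.} You assert that $B=(R:_{Q(R)}\m^{\ell})$ is a ring. From $\m^\ell B\subseteq R$ you only get $\m^\ell B^2\subseteq B$. The inclusion $\m^\ell B^2\subseteq \m^\ell B$ would need $\m^\ell B\subseteq \m^\ell$, i.e.\ that $\m^\ell$ is itself a trace ideal, and that fails in general.

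Take $R=k[t^2,t^3]$ and $\ell=2$, which your setup allows since $y\m^2=\m^3$. Then $\m^2=t^4k[t]$ and $B=t^{-2}k[t]$, so $B^2=t^{-4}k[t]\not\subseteq B$. Worse, here $\tr_R(B)=\tr_R(k[t])=\mathfrak{c}_R=\m$, yet $\m B=k[t]\not\subseteq R$. So the equivalence ``$\tr_R(I^*)=\m$ iff $\m B\subseteq R$'', on which your concluding step rests, is false.

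This is not just a matter of choosing $\ell$ too large. For $R=k[t^4,t^5,t^{11}]$ the reduction number is $3$, $\m^3=t^{12}k[t]$, and $B=t^{-4}k[t]$, again not a ring.

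\textbf{How to repair it.} The ring you want is $S:=\m^\ell/y^\ell$, not $B$. Since $\m^{2\ell}=y^\ell\m^\ell$, $S$ is a ring with $1\in S$. Moreover $B=y^{-\ell}(R:_{Q(R)}S)\cong (R:_{Q(R)}S)$, and
\[
(R:_{Q(R)}S)=(R:_{Q(R)}S)S=\tr_R(\m^\ell)=(y^\ell:\m^\ell),
\]
which is exactly \Cref{mpowern} (1). In particular $(R:_{Q(R)}S)$ is a trace ideal, so $\tr_R(I^*)=(R:_{Q(R)}S)$. The hypothesis $\tr_R(I^*)=\m$ then gives $\m S\subseteq R$. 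Since $y^{\ell-1}\m\subseteq\m^\ell$, we have $\m/y\subseteq S$, so $\m^2\subseteq yR$, and your degree argument finishes the proof.

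\textbf{Comparison with the paper.} The paper also reaches $(y^\ell:\m^\ell)=\m$. It then concludes using the Ulrich property of $(y^\ell:\m^\ell)\cong(\m^\ell)^*$ (\Cref{ulrichex} (5)). The repaired version of your route replaces that input with the elementary containment $\m/y\subseteq S$ and the conductor of the blowup ring $S$.
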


\begin{proof}
We may suppose that $R$ is not a DVR. Let $\ell$ be the reduction number for $R$ and let $A$ be a minimal homogeneous presentation matrix for $\m^{\ell}$. Then there is a short exact sequence $0 \to (\m^{\ell})^* \to R^{\oplus \mu_R(\m^{\ell})} \to \im(A^T) \to 0$. We claim that $\mu_R(\im(A^T))=\mu_R(\m^{\ell})$. Indeed, the exact sequence above immediately gives that $\mu_R(\im(A^T)) \le \mu_R(\m^{\ell})$. If $\mu_R(\im(A^T))<\mu_R(\m^{\ell})$, then this would force $(\m^{\ell})^*$ to have $R$ as a summand, which would force $(\m^{\ell})^* \cong R$ by rank considerations. But this would force $\m^{\ell}$ to be principal by \cite[Lemma 4.1]{lindo2017trace} (cf. \cite[Lemma 3.9]{DE21}). As $\m^{\ell}$ is Ulrich, this would mean that $R$ is a DVR, contradicting our assumption to the contrary.

From Proposition \ref{mpowern} (2) and \Cref{nofreesummand}, we have $\m=I_1(A)=I_1(A^T) \subseteq \tr_R(\im(A^T)) \ne R$ so that $\tr_R(\im(A^T))=\m$. By additivity of rank, we have $\rank(\im(A^T))=\mu_R(\m^\ell)-1=\mu_R(\im(A^T))-1$. Then by assumption we have $\tr_R((\m^{\ell})^*)=\m$. But $(\m^{\ell})^* \cong (y^{\ell}:\m^{\ell})$ by \cite[Lemmas 2.4.1-2.4.3]{HS06}, so from Proposition \ref{mpowern} $(1)$, we have $\tr_R((\m^{\ell})^*)=\tr_R(y^{\ell}:\m^{\ell})=(y^{\ell}:\m^{\ell})$. It follows that $(y^{\ell}:\m^{\ell})=\m$. But $(y^{\ell}:\m^{\ell})$ is Ulrich, from \Cref{ulrichex} (5), and then so is $\m$. This means $y\m=\m^2$, so that $R$ has minimal multiplicity.
\end{proof}

As the following example demonstrates, even minimal multiplicity is insufficient to force the condition $\tr_R(M) \subseteq \tr_R(\Omega^1_R(M))$ to hold: 

\begin{example}\label{minmultexample}
Let $R=k[t^3,t^5,t^7]$ and set $M:=\Omega^1_R(\Tr_R(\mathfrak{c}_R))$. Then $\tr_R(M)=\m$ while $\tr_R(\Omega^1_R(M))=\mathfrak{c}_R$. In particular, $\tr_R(M) \nsubseteq \tr_R(\Omega^1_R(M))$.
\end{example}

\begin{proof}
The Frobenius number of $R$ is $4$, so $\mathfrak{c}_R=(t^5,t^6,t^7)$. We claim that 
\[A=\begin{pmatrix} t^5 & t^6 & t^7 & 0 & 0 & 0 \\ 0 & -t^5 & 0 & t^6 & t^7 & t^8 \\ -t^3 & 0 & -t^5 & -t^5 & -t^6 & -t^7  \end{pmatrix}\] is a minimal homogeneous presentation matrix for $\mathfrak{c}_R$. Let $p:R^{\oplus 3} \to M$ be given by $e_1 \mapsto t^5$, $e_2 \mapsto t^6$, and $e_3 \mapsto t^7$. We observe directly that $\im(A) \subseteq \ker(p)=\Omega^1_R(\mathfrak{c}_R)$. Then there is a short exact sequence
\[0 \rightarrow K \rightarrow C \xrightarrow{q} \mathfrak{c}_R \rightarrow 0\]
where $C:=\coker(A)$, where $q$ is the natural map, and where $K:=\ker(q)$. Applying $- \otimes_R R/t^3 R$, we have, since $\mathfrak{c}_R$ is torsion-free, an exact sequence
\[0 \rightarrow K \otimes_R R/t^3 R \rightarrow C \otimes_R R/t^3 R \xrightarrow{q \otimes R/t^3 R} \mathfrak{c}_R \otimes_R R/t^3 R \rightarrow 0.\]

We also have an exact sequence $(R/t^3 R)^{\oplus 6} \xrightarrow{\bar{A}} (R/t^3R)^{\oplus 3} \rightarrow C \otimes_R R/t^3 R \rightarrow 0$. Applying elementary column operations, we see that 
\[\im(\bar{A})=\begin{pmatrix} t^5 & 0 & t^7 & 0 & 0 & 0 \\ 0 & -t^5 & 0 & 0 & t^7 & 0 \\ 0 & 0 & -t^5 & -t^5 & 0 & -t^7  \end{pmatrix} \cong \begin{pmatrix} t^5 & 0 & 0 & t^7 & 0 & 0 \\ 0 & t^5 & 0 & 0 & t^7 & 0 \\ 0 & 0 & t^5 & 0 & 0 & t^7  \end{pmatrix}.\]
Noting that $R/t^3 R \cong k[x,y]/(x^2,xy,y^2)$, each column of this matrix is a distinct socle generator, and it follows that $\im(\bar{A}) \cong k^{\oplus 6}$. As $l_R(R/t^3 R)=e(R)=3$, it follows from additivity of length that $l_R(C \otimes_R R/t^3 R)=9-6=3$. But $\mathfrak{c}_R \otimes R/t^3 R \cong k^{\oplus 3}$, and thus $l_R(K \otimes_R R/t^3R)=0$. But this forces $K=0$, so that $q$ is an isomorphism. But as $q$ is the natural map, this means $\im(A)=\Omega^1_R(\mathfrak{c}_R)$.

Thus 
\[M=\im(A^T)=\begin{pmatrix} t^5 & 0 & -t^3 \\ t^6 & -t^5 & 0 \\ t^7 & 0 & -t^5 \\ 0 & t^6 & -t^5 \\ 0 & t^7 & -t^6 \\ 0 & t^8 & -t^7 \end{pmatrix}.\]
Let $x_i$ denote the ith column of $A^T$. Then $\im(A^T)$ will have a nonzero free summand if and only if row operations on a corresponding presentation for $\im(A^T)$ produce a row of $0$'s. But $t^6x_1+t^7x_2+t^8x_3=0$. For $t^6,t^7,t^8$, none of these is in the ideal generated by the other two, and so $\im(A^T)$ cannot have $R$ as a summand. On the other hand, restricting the projection maps $R^{\oplus 6} \to R$ to $M$, we see that $\m \subseteq \tr_R(M)$ and it follows that $\tr_R(M)=\m$. But $\Omega^1_R(M) \cong \Omega^2_R(\Tr_R(\mathfrak{c}_R)) \cong \mathfrak{c}_R^* \cong \mathfrak{c}_R$. So $\tr_R(\Omega^1_R(M))=\mathfrak{c}_R$, completing the proof. 
\end{proof}

In order to provide our characterization for the $\tr_R(M) \subseteq \tr_R(\Omega^1_R(M))$ behavior, we must explore properties of numerical semigroup rings of minimal multiplicity a bit further than what is known in the literature. In particular, we prove the following unique to the minimal multiplicity setting:

\begin{prop}\label{tracemequiv}
Suppose $R$ is Cohen-Macaulay of dimension $1$ and has minimal multiplicity. If $M$ is a torsion-free $R$-module, then the following are equivalent:
\begin{enumerate}
\item[$(1)$] $\tr_R(M)=\m$.
\item[$(2)$] $\tr_R(M^*)=\m$.
\end{enumerate}
\end{prop}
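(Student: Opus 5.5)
The plan is to use two inputs throughout: the inclusion $\tr_R(M) \subseteq \tr_R(M^*)$ from \Cref{traceprop} (4), and the abundance of Ulrich modules under minimal multiplicity (\Cref{ulrichex}, \Cref{traceofulrichisulrich}). Throughout I will assume $M \neq 0$ and $R$ is not a DVR. Over a DVR a torsion-free $M$ is free, so both traces are $R$ or $0$ and neither (1) nor (2) can hold. If $R$ is not a DVR, then $R$ itself is not Ulrich, since $e(R)>1=\mu_R(R)$.

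\emph{(1) $\Rightarrow$ (2).} Suppose $\tr_R(M)=\m$. Then $\tr_R(M)\neq R$, so by \Cref{traceprop} (5) $M$ has no nonzero free summand. By \Cref{ulrichex} (2), $M^*$ is Ulrich, and then $\tr_R(M^*)$ is Ulrich by \Cref{traceofulrichisulrich}. Since $R$ is not Ulrich, $\tr_R(M^*)\neq R$. Combined with $\m=\tr_R(M)\subseteq \tr_R(M^*)$, this forces $\tr_R(M^*)=\m$.

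\emph{(2) $\Rightarrow$ (1).} Suppose $\tr_R(M^*)=\m$. First, $M$ has no free summand: if $M\cong R\oplus N$ then $M^*\cong R\oplus N^*$, which would give $\tr_R(M^*)=R$. Hence $I:=\tr_R(M)\subseteq \tr_R(M^*)=\m$, and it remains to prove $\m\subseteq I$. Since $M$ is torsion-free and nonzero, $I$ contains a nonzerodivisor, so $I$ is an $\m$-primary trace ideal with $I=(R:_{Q(R)}I)I$ by \Cref{traceprop} (7).

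The key observation is that every $f\in M^*$ has image in $I$, so $M^*=\Hom_R(M,I)$. Consequently $M^*$ is naturally a module over $B:=(I:_{Q(R)}I)=(R:_{Q(R)}I)$; the last equality holds because $I$ is a trace ideal. For $b\in B$, $\varphi\in M^{**}$ and $f\in M^*$ we get $b\,\varphi(f)=\varphi(bf)\in R$. Hence $B\cdot\tr_R(M^*)\subseteq R$, that is, $B\subseteq (R:_{Q(R)}\m)$. The reverse inclusion $(R:_{Q(R)}\m)\subseteq (R:_{Q(R)}I)=B$ is automatic from $I\subseteq\m$, so
\[(R:_{Q(R)} I)=(R:_{Q(R)}\m).\]

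The final step, which I expect to be the main obstacle, is to recover $I=\m$ from this equality of duals. The route I would try first is that an ideal of positive grade which is a trace ideal is determined by its dual: $I=(R:_{Q(R)}I)I$. Together with minimal multiplicity, which gives $(R:_{Q(R)}\m)=(\m:_{Q(R)}\m)$ and hence $\m=(R:_{Q(R)}\m)\m$, this should yield
\[I=(R:_{Q(R)}\m)I \quad\text{and}\quad \m=(R:_{Q(R)}\m)\m.\]
To conclude, I would aim to show that both $I$ and $\m$ equal the conductor of the ring $B=(\m:_{Q(R)}\m)$ into $R$, namely $(R:_{Q(R)}B)$. For $\m$ this should follow from $\m=B\m$ and $\m\subseteq(R:_{Q(R)}B)\subsetneq R$. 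For $I$ it would follow from the standard fact that a trace ideal of positive grade equals $(R:_{Q(R)}(I:_{Q(R)}I))$. If that identification turns out to be unavailable as stated, I would instead use the Ulrich property of $M^*$ and a minimal reduction $y$ of $\m$ to compare lengths of $(y:I)$ and $(y:\m)$, exploiting $(y:I)=y(R:_{Q(R)}I)=y(R:_{Q(R)}\m)=(y:\m)$ together with $I=(y:(y:I))$ for trace ideals.
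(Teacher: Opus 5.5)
Your proof of (1)$\Rightarrow$(2) is correct, and it differs slightly from the paper's. The paper excludes $\tr_R(M^*)=R$ by citing the fact that, under minimal multiplicity, a free summand of $M^*$ forces one of $M$. You instead use that $M^*$ is Ulrich (\Cref{ulrichex} (2)), then \Cref{traceofulrichisulrich}, and that $R$ is not Ulrich. In (2)$\Rightarrow$(1), your reduction to $(R:_{Q(R)}I)=(R:_{Q(R)}\m)=:B$ for $I=\tr_R(M)\subseteq\m$ is also correct. (Your claim that $I$ contains a nonzerodivisor merely because $M$ is torsion-free needs $R$ to be a domain, which is the paper's setting.)

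\textbf{The gap is the last step.} Both routes you propose rest on the claim that a trace ideal of positive grade is reflexive, i.e.\ $I=R:_{Q(R)}(I:_{Q(R)}I)=R:_{Q(R)}(R:_{Q(R)}I)$; equivalently $I=(y:(y:I))$ for a nonzerodivisor $y\in I$. This is false in general. It holds for Gorenstein $R$, but rings of minimal multiplicity with $e(R)\ge 3$ are never Gorenstein. Concretely, take the ring $R=k[t^5,t^9,t^{11},t^{17}]$ of \Cref{assumptionsneeded}. There $T=\tr_R(t^9,t^{11})=(t^9,t^{11},t^{15},t^{17})$ is a trace ideal with $T:_{Q(R)}T=k[t^5,t^6,t^7,t^8,t^9]$, yet $t^{10}\in (R:_{Q(R)}(T:_{Q(R)}T))\setminus T$. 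So $T$ and its bidual are distinct trace ideals with the same dual: a trace ideal is not determined by its dual. Likewise, comparing lengths of $(y:I)$ and $(y:\m)$ gives nothing new. You already know these two ideals are equal, and the missing piece is exactly a way back from $(y:I)$ to $I$.

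\textbf{The missing idea} is to use minimal multiplicity on $I$ itself, not only on $\m$.
\begin{enumerate}
\item[$(a)$] Since $I=(R:_{Q(R)}I)I=BI$, $I$ is a $B$-module, and $\m=yB$ for a minimal reduction $y$ of $\m$ (e.g.\ $y=t^{a_1}$).
\item[$(b)$] Because $I=\tr_R(I)\neq R$, $I$ is not principal. So every $q\in R:_{Q(R)}I$ maps $I$ into $\m$, which gives $y(B:_{Q(R)}I)=(\m:_{Q(R)}I)=(R:_{Q(R)}I)=B$.
\item[$(c)$] Set $J:=y^{-1}I$. Then $B:_{Q(R)}J=B$, so $J\subseteq B$ (as $1\in B$), and $J$ is an ideal of the one-dimensional semilocal Cohen-Macaulay ring $B$.
\item[$(d)$] Suppose $J$ were proper. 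Pick a maximal ideal $\mathfrak{n}\supseteq J$ of $B$ and a nonzerodivisor $x\in J$. Since $\mathfrak{n}\in\operatorname{Ass}_B(B/xB)$, there is $z\in B\setminus xB$ with $z\mathfrak{n}\subseteq xB$. Then $z/x\in(B:_{Q(R)}J)\setminus B$, a contradiction.
\end{enumerate}
Hence $J=B$ and $I=yB=\m$. This is precisely what the paper's step $\Hom_{\End_R(\m)}(\tr_R(M),\End_R(\m))\cong\End_R(\m)\Rightarrow\tr_R(M)\cong\End_R(\m)$ accomplishes.
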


\begin{proof}
If $\tr_R(M)=\m$, it follows from \Cref{traceprop} (4) that $\m \subseteq \tr_R(M^*)$. On the other hand, if $\tr_R(M^*)=R$, then $R$ is a summand of $M^*$, and then $R$ would be a summand of $M$ by \cite[Corollary 3.10]{lyle2024annihilators}. As this is not the case, we must have $\tr_R(M^*)=\m$. 

Now suppose $\tr_R(M^*)=\m$. If $R$ is a summand of $M$, then it would also be a summand of $M^*$, so we have $\tr_R(M) \ne R$. From \cite[Lemma 3.12 (6)]{lyle2024annihilators}, we have that $\tr_R(M)$ is Ulrich as an $R$-module. So $t^{a_1}\tr_R(M)=\m \tr_R(M)$. In particular, $\tr_R(M) \cong \m \tr_R(M)$, and $\tr_R(M)$ inherits a module structure over $\End_R(\m)$ from that of $\m \tr_R(M)$. From \cite[Corollary 3.24]{lindo2017trace} we have $\End_R(\tr_R(M)) \cong \End_R(\tr_R(M^*))=\End_R(\m)$. Since $R$ has minimal multiplicity we have an isomorphism of $\End_R(\m)$-modules, $\End_R(\m) \cong \m$; see \cite[3.12]{CC25}. Noting that $\Hom_R(A,B)=\Hom_{\End_R(\m)}(A,B)$ whenever $A,B$ are $\End_R(\m)$-modules with $B$ torsion-free (see e.g. \cite[Lemma 6.3]{lindo2017trace}), we have isomorphisms of $\End_R(\m)$-modules
\[\End_R(\tr_R(M)) \cong \Hom_R(\tr_R(M),\m) \cong \Hom_{\End_R(\m)}(\tr_R(M),\m) \cong \Hom_{\End_R(\m)}(\tr_R(M),\End_R(\m)).\]
So $\Hom_{\End_R(\m)}(\tr_R(M),\End_R(\m)) \cong \End_R(\m)$ and it follows from \cite[Lemma 3.9]{DE21} that $\tr_R(M) \cong \End_R(\m) \cong \m$, but then $\tr_R(\tr_R(M))=\tr_R(M)=\m$, as claimed.
\end{proof}

\begin{lem}\label{colonidealtrace}
Suppose $R=k[t^{a_1},\dots,t^{a_n}]$ be a numerical semigroup ring of minimal multiplicity and let $I=(t^a,t^b)$ be a homogeneous non-principal ideal in $R$ with $a<b$. Then $\tr_R(t^a:t^b)=\m$ if and only if $(t^a:t^b)=\m$. 
\end{lem}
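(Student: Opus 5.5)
The direction ``$\Leftarrow$'' is immediate. If $(t^a:t^b)=\m$, then $\tr_R(t^a:t^b)=\tr_R(\m)$. By \Cref{traceprop} (1) this contains $\m$. It is not $R$: otherwise $R$ would be a summand of $\m$ by \Cref{traceprop} (5), making $\m$ principal and $R$ a DVR, which is impossible since $I$ is a non-principal homogeneous ideal. Hence $\tr_R(t^a:t^b)=\m$.

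For ``$\Rightarrow$'', set $J:=(t^a:t^b)$ and note that $J=(t^a:I)$. The plan has four steps.

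\begin{enumerate}
\item[(a)] Record the structure of $J$. By \Cref{ulrichex} (2), $J$ is Ulrich. By \cite[Lemmas 2.4.1-2.4.3]{HS06}, $J\cong I^*$. By \Cref{traceprop} (1), $J\subseteq \tr_R(J)=\m$.
\item[(b)] Show that $J$ is a module over $B:=\End_R(\m)=(\m:_{Q(R)}\m)$. Since $J$ is Ulrich, $\m J=t^{a_1}J$, so $J\cong \m J$ is a $B$-module, exactly as in the proof of \Cref{tracemequiv}.
\item[(c)] Show $J\cong \m$. Run the argument of \Cref{tracemequiv} with $M=J$:
\begin{itemize}
\item Since $\tr_R(J)=\m\neq R$, every map $J\to R$ lands in $\m$. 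Hence $J^*=\Hom_R(J,\m)=\Hom_B(J,\m)\cong \Hom_B(J,B)$, using $B\cong \m$ as $B$-modules (minimal multiplicity, \cite[3.12]{CC25}).
\item Next, $B\subseteq \End_R(J)\subseteq \End_R(\tr_R(J))=\End_R(\m)=B$. The last containment holds because, $R$ being a domain, $\End_R(J)=(J:_{Q(R)}J)$ multiplies $J^*J$ into itself. So $\End_R(J)=B$.
\item Comparing as in that proof, $\Hom_B(J,B)\cong B$ (equivalently $J^*\cong \m$). Then \cite[Lemma 3.9]{DE21} gives $J\cong B\cong \m$.
\end{itemize}
\item[(d)] Use the grading. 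Since $J$ and $\m$ are rank one homogeneous ideals of a numerical semigroup ring, the isomorphism is multiplication by a homogeneous fraction: $J=t^c\m$ for some integer $c$. Because $J\subseteq \m$, its least degree $c+a_1$ is at least $a_1$, so $c\ge 0$.
\end{enumerate}

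It remains to rule out $c>0$, and I expect this to be the main obstacle. Write $\delta:=b-a>0$, so that $t^s\in J$ if and only if $s\in S$ and $s+\delta\in S$, where $S$ is the semigroup. If $c>0$, then $J=t^c\m$ has the following consequences:
\begin{itemize}
\item $t^{a_1}\notin J$ (since every element of $t^c\m$ has degree at least $c+a_1>a_1$), so $a_1+\delta\notin S$;
\item $c+a_j\in S$ for every generator $a_j$;
\item $t^{c}$ itself is not in $J$.
\end{itemize}
I would first try to contradict this using $J\cdot I\subseteq (t^a)$ together with minimal multiplicity ($\m^2=t^{a_1}\m$). Concretely: $t^{c+a_1}\in J$ and $t^{c+a_j}\in J$ give $c+a_1+\delta\in S$ and $c+a_j+\delta\in S$. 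I would then show that minimal multiplicity (every element of $S$ beyond $a_1$ differs by $a_1$ from an element of $S$ or from a generator) pushes this down to $c+\delta\in S$ and $c\in S$, forcing $t^c\in J$, a contradiction. If this gap-chasing stalls, the fallback is to replace $J$ by $J^*\cong I$ (using \Cref{tracemequiv} to get $\tr_R(I)=\m$) and compare least degrees of $I$, $J$ and $\m$ via $J\,(R:_{Q(R)}J)=\m$.
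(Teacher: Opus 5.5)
Your reverse implication and steps (a)--(b) are fine. The forward implication, however, has a genuine gap, in two places.

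\textbf{First gap: the third bullet of (c).} In the proof of \Cref{tracemequiv}, the isomorphism $\End_R(\tr_R(M))\cong\Hom_R(\tr_R(M),\m)$ holds because $T=\tr_R(M)$ is a trace ideal, so that $(R:_{Q(R)}T)=(T:_{Q(R)}T)$. For $J$ you only know $(J:_{Q(R)}J)=B$. But $J^*=(R:_{Q(R)}J)$ coincides with this only if $J$ is itself a trace ideal, which you do not know. So ``comparing as in that proof'' does not give $\Hom_B(J,B)\cong B$.

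This part is repairable. By \Cref{traceprop} (7) and minimal multiplicity, $J^*J=\tr_R(J)=\m=t^{a_1}B$. Hence $t^{-a_1}J^*$ is an inverse of $J$ as a homogeneous fractional ideal of $B=t^{-a_1}\m$; here you use that $J$ is a $B$-module, as you showed in (b). Since $B$ is again a numerical semigroup ring, a homogeneous invertible ideal is principal. So $J=t^{c}\m$ with $c\ge 0$, which gives (c) and (d) at once.

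\textbf{Second gap: the final step.} This is the more serious problem. The step you flag as the main obstacle is left as a plan, and the plan cannot work as described.

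Everything you extract is compatible with $c>0$ and $c\notin S$. In fact the whole inclusion $t^c\m\subseteq J$, which is all that $JI\subseteq(t^a)$ tells you, is compatible with this. Note that $c+a_j\in S$ for all $j$ only says that $c\in S$ or $c$ is a pseudo-Frobenius number. For a concrete instance, write $S^+=S\setminus\{0\}$ and take $R=k[t^3,t^7,t^{11}]$ with $c=4$ and $\delta=2$, so $I=(t^7,t^9)$. Then:
\begin{itemize}
\item $4+S^+\subseteq S$;
\item $4+2+S^+\subseteq S$;
\item $3+2\notin S$;
\item yet $4\notin S$.
\end{itemize}
So you cannot reach $c\in S$ this way. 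The contradiction has to come from the reverse inclusion $J\subseteq t^c\m$.

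The missing idea is the conductor. Let $F$ be the Frobenius number. Then $\mathfrak{c}_R\subseteq J$, since $s\ge F+1$ implies $s,s+\delta\in S$. On the other hand, for $c\ge 1$ the element $t^{F+c}\in\mathfrak{c}_R$ does not lie in $t^c\m$, because $F\notin S$. Hence $c=0$ and $J=\m$.

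With both repairs, your route gives a self-contained proof that differs from the paper's. The paper instead applies \Cref{tracemequiv} to $I$ (as $J\cong I^*$) to get $\tr_R(I)=\m$. It then invokes \cite[Theorem 4.8]{lyle2024annihilators} for the minimal reduction $t^a$ of $I$.
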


\begin{proof}
As $(t^a:t^b) \cong I^*$ from \cite[Lemma 3.3]{HH05}, it follows from Proposition \ref{tracemequiv} that $\tr_R(t^a,t^b)=\m$ if and only if $\tr_R(I)=\m$. But since $a<b$, $t^a$ is a minimal reduction of $I$, and it follows from \cite[Theorem 4.8]{lyle2024annihilators} that $\tr_R(I)=\m$ if and only if $(t^a:I)=(t^a:t^b)=\m$.
\end{proof}

The following is well-known, but we include a proof due to lack of a suitable reference:

\begin{lem}\label{gensetexists}
Suppose $R$ is a domain, let $M$ be a finitely generated $R$-module, and let $r:=\rank(M)$. If $x_1,\dots,x_n$ is a minimal generating set for $M$, then there is a subcollection $x_{i_1},\dots,x_{i_r}$ of the $x_i$ for which $\langle x_{i_1},\dots,x_{i_r} \rangle$ is free. 
\end{lem}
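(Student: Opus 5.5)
The plan is to pass to the fraction field $K:=Q(R)$ and use linear algebra there. Since $R$ is a domain, $\rank(M)=\dim_K(M\otimes_R K)$. The images of $x_1,\dots,x_n$ in $M\otimes_R K$ span this $K$-vector space, because $M\otimes_R K$ is generated as a $K$-module by the images of any generating set of $M$. A spanning set of a finite-dimensional vector space contains a basis, so I would choose indices $i_1,\dots,i_r$ for which $x_{i_1}\otimes 1,\dots,x_{i_r}\otimes 1$ form a $K$-basis of $M\otimes_R K$.

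Next I would show that the submodule $N:=\langle x_{i_1},\dots,x_{i_r}\rangle$ is free on these generators. Consider the surjection $R^{\oplus r}\to N$ sending $e_j\mapsto x_{i_j}$, and suppose $\sum_j r_j x_{i_j}=0$ with $r_j\in R$. Applying $-\otimes_R K$ gives $\sum_j r_j (x_{i_j}\otimes 1)=0$ in $M\otimes_R K$. By linear independence over $K$, every $r_j$ is zero as an element of $K$. Since $R$ is a domain, the map $R\to K$ is injective, so each $r_j=0$ in $R$. Hence the surjection is injective, and $N\cong R^{\oplus r}$.

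There is essentially no obstacle. The only point needing care is that linear independence over $K$ of the $x_{i_j}\otimes 1$ forces the relation coefficients to vanish. This uses that $R\hookrightarrow K$ is injective (the domain hypothesis); no torsion-freeness of $M$ is needed. Minimality of the generating set $x_1,\dots,x_n$ is not actually used; any generating set works. In the graded setting, if the $x_i$ are homogeneous, the chosen subcollection consists of homogeneous elements automatically.
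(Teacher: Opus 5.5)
Your proof is correct and follows essentially the same route as the paper: trim the images of the $x_i$ in $M\otimes_R Q(R)$ to a basis, then show the submodule $\langle x_{i_1},\dots,x_{i_r}\rangle$ has no relations. The only difference is in that last step: you check it elementwise using injectivity of $R\to Q(R)$, whereas the paper uses flatness of $Q(R)$ to get $\Omega^1_R(F)\otimes_R Q(R)=0$ and then concludes from $\Omega^1_R(F)$ being torsionless.
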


\begin{proof}
The images $x_1 \otimes 1,x_2 \otimes 1,\dots,x_n \otimes 1$ in $M \otimes_R Q(R)$ form a generating set for $M \otimes_R Q(R)$. Since $M$ has rank $r$, $M \otimes_R Q(R)$ is a vector space over $Q(R)$ of dimension $r$, and we may thus trim the $x_i \otimes 1$ so that $x_{i_1} \otimes 1,\dots,x_{i_r} \otimes 1$ form a basis for $M \otimes_R Q(R)$. Let $F=\langle x_{i_1},\dots,x_{i_r} \rangle$. Since $Q(R)$ is a flat $R$-module, the natural inclusion $i:F \to M$ induces an injection $i \otimes 1:F \otimes_R Q(R) \to M \otimes_R Q(R)$. In particular, $F$ has rank $r$. But since $F$ is $r$-generated, there is an exact sequence
\[0 \to \Omega^1_R(F) \to R^{\oplus r} \to F \to 0,\]
and applying $- \otimes_R Q(R)$ to this sequence reveals that $\Omega^1_R(F) \otimes_R Q(R)=0$. But $\Omega^1_R(F)$ is torsionless, and so it can only be that $\Omega^1_R(F)=0$, which forces $F$ to be free.
\end{proof}

The following lemma extends the familiar fact that $\Omega^1_R(I) \cong (x_1:I)$ when $I:=(x_1,x_2)$ is a two generated ideal of positive grade (see \cite[Lemma 3.3]{HH05}). 

\begin{lem}\label{rankoneoff}
Suppose $M$ is an $R$-module of constant rank $r:=\mu_R(M)-1$. Choose a minimal generating set $x_1,\dots,x_{r+1}$ for $M$ such that $F:=\langle x_1,\dots,x_r \rangle$ is a free $R$-module (such a generating set always exists by \Cref{gensetexists}). Then there is an isomorphism $\theta:(F:x_{r+1}) \to \Omega^1_R(M)$. 
\end{lem}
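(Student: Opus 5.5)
Consider the presentation $0 \to \Omega^1_R(M) \to R^{\oplus (r+1)} \xrightarrow{p} M \to 0$ with $p(e_i)=x_i$. Here $(F:x_{r+1})=\{a\in R \mid ax_{r+1}\in F\}$ is an ideal of $R$. For $a \in (F:x_{r+1})$ we can write $ax_{r+1}=\sum_{i=1}^r c_i x_i$. Since $F$ is free on $x_1,\dots,x_r$, the coefficients $c_i=c_i(a)$ are uniquely determined by $a$, and the assignment $a\mapsto c_i(a)$ is $R$-linear. Define
\[\theta(a):=\sum_{i=1}^r c_i(a)e_i - a e_{r+1}.\]
By construction $p(\theta(a))=0$, so $\theta$ is an $R$-linear map $(F:x_{r+1})\to \Omega^1_R(M)$.

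The proof is then the check that $\theta$ is bijective.

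\emph{Injectivity.} If $\theta(a)=0$, then the $e_{r+1}$-coordinate gives $a=0$ immediately.

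\emph{Surjectivity.} Take $\sum_{i=1}^{r+1} s_ie_i\in\Omega^1_R(M)$, so that $\sum_{i=1}^{r+1} s_ix_i=0$. Then $-s_{r+1}x_{r+1}=\sum_{i\le r}s_ix_i\in F$, hence $a:=-s_{r+1}\in(F:x_{r+1})$. Uniqueness of coefficients in the free module $F$ gives $c_i(a)=s_i$ for $i\le r$. Therefore $\theta(a)=\sum_{i=1}^{r+1} s_ie_i$.

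This argument uses only that $F$ is free on $x_1,\dots,x_r$, which is exactly what makes the $c_i$ well defined. The rank hypothesis $r=\mu_R(M)-1$ enters only through \Cref{gensetexists}, which guarantees that such a choice of generators exists. I do not expect a genuine obstacle. The one point needing care is the linearity and well-definedness of $a\mapsto c_i(a)$, which follows from the fact that $x_1,\dots,x_r$ is a basis of $F$.
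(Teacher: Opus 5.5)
Your proof is correct and is essentially the paper's argument. The paper defines the same map up to an overall sign, $a \mapsto (-s^a_1,\dots,-s^a_r,a)^T$, using uniqueness of coefficients in the free module $F$, and notes that projection onto the $(r+1)$st coordinate is its inverse, which is exactly what your injectivity and surjectivity checks establish.
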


\begin{proof}
If $a \in (F:x_{r+1})$ then $ax_{r+1} \in F$, and as $F$ is free, $ax_{r+1}$ can be written uniquely as 
\[ax_{r+1}=s^a_1x_1+\cdots +s^a_rx_r.\]
We define $\theta:(F:x_{r+1}) \to \Omega^1_R(M)$ by 
\[a \mapsto \begin{pmatrix} -s^a_1 \\ \vdots \\ -s^a_r \\ a \end{pmatrix}.\]
It is easy to see that the map $\Omega^1_R(M) \to (F:x_{r+1})$ given by projection onto the $r+1$st component is an inverse for $\theta$. 
\end{proof}

\begin{lem}\label{freesummandwhen}
Suppose $R=k[t^{a_1},\dots,t^{a_n}]$ is a numerical semigroup ring of minimal multiplicity that is not a DVR. Note that $t^{a_n+i} \in R$ since $a_n-a_1$ is the Frobenius number of $R$ (see \cite[Exercise 2.14]{RG09}). Consider a matrix $A$ of the form
\[A:=\begin{pmatrix} t^{a_n} & 0 & t^{a_n+i} \\ 0 & t^{a_1} & t^{a_2} \end{pmatrix}\]
where $0<i<a_1$ and set $M:=\im(A)$. Then $\tr_R(M)=R$ if and only if $t^{|a_2-a_1-i|} \in R$. 
\end{lem}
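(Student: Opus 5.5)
The plan is to compute $M^*$ explicitly and test directly whether some homogeneous functional on $M$ takes a unit value at one of the three generators.

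Write the columns of $A$ as $u_1=(t^{a_n},0)^T$, $u_2=(0,t^{a_1})^T$, $u_3=(t^{a_n+i},t^{a_2})^T$. We have $\tr_R(M)=R$ if and only if $1\in\tr_R(M)$. Since $M$ is graded and $\tr_R(M)$ is generated by the homogeneous elements $f(u_j)$ with $f\in M^*$ homogeneous, this happens if and only if some homogeneous $f$ has $f(u_j)$ a nonzero constant for some $j$. Indeed, a homogeneous ideal contains $1$ iff one of its homogeneous generators has degree $0$, because $R_0=k$.

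To make $M$ graded, I give the second coordinate of $R^{\oplus 2}$ the shift $s:=a_n+i-a_2$, so that $u_3$ is homogeneous. The columns $u_1$ and $u_3$ span $Q(R)^{\oplus 2}$, since $\det\begin{pmatrix} t^{a_n} & t^{a_n+i}\\ 0 & t^{a_2}\end{pmatrix}\neq 0$, so $M$ has rank $2$. As $M\subseteq R^{\oplus2}$ is torsion-free of rank $2$, every $f\in M^*$ extends uniquely to $Q(R)^{\oplus 2}$ and is given by a row vector $(p,q)$ with $p,q\in k(t)$. By homogeneity, $f$ of degree $\delta$ has $p=\alpha t^{\delta}$ and $q=\beta t^{\delta-s}$ with $\alpha,\beta\in k$. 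The condition $f\in M^*$ is then exactly
\[
f(u_1)=\alpha t^{a_n+\delta},\qquad f(u_2)=\beta t^{a_1+\delta-s},\qquad f(u_3)=(\alpha+\beta)t^{a_n+i+\delta}
\]
all lying in $R$. Two facts do most of the work here. First, $t^{c}\in R$ forces $c\ge 0$. Second, $t^{c}\notin R$ for $0<c<a_1$, which applies in particular to $t^i$. By minimal multiplicity, $a_2$ is a minimal generator, and hence $t^{a_2-a_1}\notin R$.

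I then split into three cases according to which value is the unit.

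\emph{Case $f(u_1)=1$.} Take $\delta=-a_n$ and $\alpha\neq 0$. Then $f(u_3)=(\alpha+\beta)t^{i}$, and since $t^i\notin R$ we need $\beta=-\alpha$. This leaves $f(u_2)=-\alpha t^{a_1+i-a_2}$, so such an $f$ exists iff $t^{a_1+i-a_2}\in R$.

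\emph{Case $f(u_2)=1$.} Take $\delta=s-a_1$ and $\beta\neq0$. Then $f(u_3)=(\alpha+\beta)t^{a_2-a_1}$, and since $t^{a_2-a_1}\notin R$ we need $\alpha=-\beta\neq 0$. This leaves $f(u_1)=\alpha t^{a_2-a_1-i}$, so such an $f$ exists iff $t^{a_2-a_1-i}\in R$.

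\emph{Case $f(u_3)=1$.} Take $\delta=-a_n-i$. Then $f(u_1)=\alpha t^{-i}$ forces $\alpha=0$, and $f(u_2)=\beta t^{a_1-a_2}$ has negative exponent, forcing $\beta=0$. This contradicts $f(u_3)=(\alpha+\beta)\neq 0$, so this case never occurs.

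Combining the cases, $\tr_R(M)=R$ iff $t^{a_1+i-a_2}\in R$ or $t^{a_2-a_1-i}\in R$. Since a negative exponent never lies in $R$ and $t^0=1\in R$, this is exactly the condition $t^{|a_2-a_1-i|}\in R$. In each case the converse direction is immediate, because the displayed $f$ is then an honest element of $M^*$.

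The step needing the most care is the reduction to homogeneous functionals given by row vectors $(\alpha t^{\delta},\beta t^{\delta-s})$. This requires that $\Hom_R(M,R)$ is graded with homogeneous generators, which holds since $M$ is finitely generated and graded. It also requires the identification $M^*\subseteq \Hom_{Q(R)}(Q(R)^{\oplus2},Q(R))$, which holds since $M$ has full rank in $R^{\oplus 2}$.
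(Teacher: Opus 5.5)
Your proof is correct, and it reaches the criterion by a different route than the paper.

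\textbf{How the two routes differ.} The paper works on the relation side. It uses Lemma~\ref{rankoneoff} and the pseudo-Frobenius fact $(t^{a_1}:t^{a_2})=\m$ to identify $\Omega^1_R(M)$ with $(t^{a_n}:t^{a_n+i})$. This gives a presentation matrix $B$ of $M$ whose rows are $-t^{i}v$, $-t^{a_2-a_1}v$ and $v$, for a common row vector $v$ of monomials. The paper then decides by ``degree considerations'' whether row operations on $B$ can produce a zero row, i.e.\ whether $R$ splits off $M$.

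You work on the dual side instead. You realize $M^*$ inside $\Hom_{Q(R)}(Q(R)^{\oplus 2},Q(R))$ and ask directly whether some homogeneous functional takes a unit value on a generator. Your three cases correspond to the three ways one row of $B$ could be an $R$-combination of the other two, and you make that case analysis explicit where the paper leaves it terse.

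\textbf{What each buys.} Your route is self-contained. You need neither Lemma~\ref{rankoneoff}, nor the colon-ideal computation, nor the free-summand-via-row-operations criterion. Your only inputs are $t^{i}\notin R$, $t^{a_2-a_1}\notin R$, and nonnegativity of exponents. Minimal multiplicity enters only to guarantee $t^{a_n+i}\in R$. The fact $t^{a_2-a_1}\notin R$ comes from $a_2$ being a minimal generator, not from minimal multiplicity as you phrase it. The paper's route yields the explicit isomorphism $\Omega^1_R(M)\cong(t^{a_n}:t^{a_n+i})$, which it needs again in the proof of Theorem~\ref{syztracemimpliesconductorm}.

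\textbf{A sign slip to fix.} If $e_2$ sits in degree $s=a_n+i-a_2$, then a functional of degree $\delta$ has $q=\beta t^{\delta+s}$, not $\beta t^{\delta-s}$. So $f(u_2)=\beta t^{a_1+\delta+s}$, and in the case $f(u_2)=1$ you should take $\delta=-a_1-s$. The exponents you actually compute ($a_1+i-a_2$, $a_2-a_1$, $a_2-a_1-i$, $a_1-a_2$) are those of the correct convention. The formula $f(u_3)=(\alpha+\beta)t^{a_n+i+\delta}$ also holds only with it. Nothing downstream changes.
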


\begin{proof}
Set $x_1$, $x_2$, and $x_3$ to be the corresponding columns of $A$, and set $F:=\langle x_1,x_2 \rangle$. Note that $F$ is free on the basis $x_1,x_2$, and note as well that $(F:x_3)=(t^{a_n}:t^{a_n+i}) \cap (t^{a_1}:t^{a_2})$. But since $R$ has minimal multiplicity, we have $(t^{a_1}:t^{a_2})=\m$ since $a_2-a_1$ is a Pseudo-Frobenius number for $R$ (see \cite[Exercise 2.14]{RG09}). Thus $(F:M)=(F:x_3)=(t^{a_n}:t^{a_n+i})$, which is Ulrich as an $R$-module by \Cref{ulrichex} (2). Let $(t^{s_1},\dots,t^{s_n})$ be a minimal generating set for $(t^{a_n}:t^{a_n+i})$. Then via the map $\theta$ from Lemma \ref{rankoneoff}, we see that $\Omega^1_R(M)=\im(B)$ where
\[B:=\begin{pmatrix} -t^{s_1+i} & -t^{s_2+i} & \cdots & -t^{s_n+i} \\ -t^{a_2+s_1-a_1} & -t^{a_2+s_2-a_1} & \cdots & -t^{a_2+s_n-a_1} \\ t^{s_1} & t^{s_2} & \cdots & t^{s_n} \end{pmatrix}.\]
Then $R$ is a summand of $M$ if and only if elementary row operations on $B$ produce a row of zeroes. But by degree considerations this is the case if and only if $t^{a_2+s_i-a_1-s_i}=t^{a_2-a_1} \in R$, $t^{s_i+i-s_i}=t^{i} \in R$, or $t^{|a_2+s_i-a_1-(s_i+i)|}=t^{|a_2-a_1-i|} \in R$. But $i \notin R$ since $0<i<a_1$, while $a_2-a_1 \notin R$ since it is a Psuedo-Frobenius number, as noted above. Therefore, $R$ is a summand of $M$ if and only if $t^{|a_2-a_1-i|} \in R$, as desired. 
\end{proof}

\begin{theorem}\label{syztracemimpliesconductorm}
Suppose $R$ is a numerical semigroup ring of minimal multiplicity that is not a DVR. Then the following are equivalent:
\begin{enumerate}
\item[$(1)$] For any torsion-free $R$-module $M$, if $\tr_R(M)=\m$, then $\tr_R(\Omega^1_R(M))=\m$.
\item[$(2)$] For any torsion-free $R$-module $M$ with $\mu_R(M)=3$ and $\rank_R(M)=2$, if $\tr_R(M)=\m$, then $\tr_R(\Omega^1_R(M))=\m$.
\item[$(3)$] Either $e(R)=2$ or $\mathfrak{c}_R=\m$, that is, $R=k[t^{a_1},t^{a_1+1},\dots,t^{2a_1-1}]$.
\item[$(4)$] For any torsion-free $R$-module $M$ with no nonzero free summand, we have $\tr_R(M) \subseteq \tr_R(\Omega^1_R(M))$

\end{enumerate}
\end{theorem}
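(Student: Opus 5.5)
The plan is to prove $(1)\Rightarrow(2)$ (trivial), $(2)\Rightarrow(3)$ by a counterexample built from \Cref{freesummandwhen}, $(3)\Rightarrow(1)$ via \Cref{tracesyzylrich}, and then to relate $(4)$ to $(1)$.

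\emph{$(3)\Rightarrow(1)$.} If $e(R)=2$ then $R=k[t^2,t^{2s+1}]$. If instead $\mathfrak{c}_R=\m$, then $R=k[t^{a_1},\dots,t^{2a_1-1}]$. Let $M$ be torsion-free with $\tr_R(M)=\m$. Then $M$ has no free summand by \Cref{traceprop} (5), so $\Omega^1_R(M)$ is Ulrich by \Cref{ulrichex} (1). By \Cref{tracesyzylrich} applied to $M$ (which is Ulrich by \Cref{ulrichex} (2) after dualizing, using \Cref{tracemequiv} to pass between $M$ and $M^*$), we get $\tr_R(\Omega^1_R(M))=(t^{da_1},t^{a_2},\dots,t^{a_n})$. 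In both cases of (3) this ideal equals $\m$ as soon as $d=1$.
\begin{itemize}
\item When $\mathfrak{c}_R=\m$, the ideal $(t^{a_2},\dots,t^{a_n})$ contains every $t^j$ with $j\ge a_1+1$. Hence $t^{da_1}$ with $d\ge 2$ is redundant, and we need $t^{a_1}$ itself. To get it, I would show $t^{a_1}\in\tr_R(\Omega^1_R(M))$ using \Cref{nofreesummand}-style reasoning: $\tr_R(\Omega^1_R(M))\neq R$, and it is Ulrich by \Cref{traceofulrichisulrich}. An Ulrich ideal containing $(t^{a_2},\dots)$ but not $t^{a_1}$ would be $(t^{2a_1},t^{a_1+1},\dots)$, and one checks $t^{a_1}$ times this is not contained in $t^{a_1}\cdot$(it), giving a contradiction.
\item When $e(R)=2$, every Ulrich ideal of trace type is $\m$ or $\mathfrak{c}_R$ or $R$, and a similar direct check applies.
\end{itemize}

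\emph{$(2)\Rightarrow(3)$.} Suppose $e(R)\ge3$ and $\mathfrak{c}_R\ne\m$, so $a_2>a_1+1$ or some $t^{a_1+i}\notin R$. Take $M=\im(A)$ as in \Cref{freesummandwhen}, choosing $0<i<a_1$ with $t^{|a_2-a_1-i|}\notin R$; one can take $i$ with $a_2-a_1-i\in(0,a_1)$, or $i=a_2-a_1-j$ for a gap $j$, using $e(R)\ge 3$. Then $M$ has rank $2$ and $\mu=3$. Restricting coordinate projections shows $\tr_R(M)\supseteq(t^{a_1},t^{a_2},t^{a_n},\dots)$. With the free-summand criterion this gives $\tr_R(M)=\m$; if needed, replace $M$ by $M\oplus\m$ to guarantee the trace is $\m$, though that changes $\mu$, so the choice must be made carefully. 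Finally, $\Omega^1_R(M)\cong(t^{a_n}:t^{a_n+i})$, so its trace is $\tr_R(t^{a_n}:t^{a_n+i})$, and by \Cref{colonidealtrace} this equals $\m$ iff $(t^{a_n}:t^{a_n+i})=\m$. Since $a_n+i+a_1-a_n=a_1+i$, and we can choose $i$ with $t^{a_1+i}\notin R$ (available exactly when $\mathfrak{c}_R\ne\m$), the colon is not $\m$. The main obstacle is choosing $i$ satisfying both constraints simultaneously; I would handle this by a case split on whether $a_2=a_1+1$.

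\emph{Relating $(4)$ and $(1)$.} For $(1)\Rightarrow(4)$: given $M$ without free summand, $\tr_R(M)\subseteq\m$. By \Cref{tracesyzylrich}, $\tr_R(\Omega^1_R(M))$ is either $\m$ or contains $(t^{a_2},\dots)$. Under (3), the ideals of that form are only $\m$ or of the form $(t^{da_1},\m\text{-rest})$. I would show that $\tr_R(M)\subseteq\tr_R(\Omega^1_R(M))$ by comparing against $\tr_R(M^*)$, which is also Ulrich, using the explicit small structure of the rings in (3). For $(4)\Rightarrow(2)$: if $\tr_R(M)=\m$ then $M$ has no free summand, so (4) gives $\m\subseteq\tr_R(\Omega^1_R(M))$. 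Equality holds since $\Omega^1_R(M)\subseteq\m F$ has no free summand, by minimality of the resolution.
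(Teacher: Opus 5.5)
\textbf{Verdict.} Your steps $(1)\Rightarrow(2)$ and $(4)\Rightarrow(2)$, and your construction for $(2)\Rightarrow(3)$, follow the paper. However, the implications leading out of $(3)$ have genuine gaps.

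\textbf{The case $\mathfrak{c}_R=\m$.} You apply \Cref{tracesyzylrich} to $M$, but that result needs $M$ to be Ulrich. \Cref{ulrichex}(2) only makes $M^*$ Ulrich, and a torsion-free $M$ need not be reflexive. More seriously, the contradiction you aim for does not exist: $(t^{2a_1},t^{a_1+1},\dots,t^{2a_1-1})=t\m\cong\m$ \emph{is} Ulrich. What excludes it is that it is not a trace ideal. The missing input is that every nonzero trace ideal contains $\mathfrak{c}_R$ (\Cref{traceprop}(9)). When $\mathfrak{c}_R=\m$ this gives $\m\subseteq\tr_R(\Omega^1_R(M))$ at once. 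It also gives (4) directly, since then $\tr_R(M)=\m$ for every nonzero $M$ without free summand.

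\textbf{The case $e(R)=2$.} Your claim that the only Ulrich trace ideals are $\m$, $\mathfrak{c}_R$ and $R$ is false. In $R=k[t^2,t^7]$ the ideal $(t^4,t^7)=R:_{Q(R)}k[t^2,t^3]=\tr_R(k[t^2,t^3])$ is an Ulrich trace ideal, so nothing in your argument forces $d=1$. The same example shows why a direct $(1)\Rightarrow(4)$ cannot work: (4) concerns modules such as $k[t^2,t^3]$ whose trace is strictly smaller than $\m$, and (1) says nothing about them. The idea you are missing is the structure theorem \cite[Theorem 4.18]{LW12}. When $e(R)=2$, every torsion-free $M$ is a direct sum of two-generated ideals $I_j$, and $\Omega^1_R(I_j)\cong I_j^*$ by \cite[Lemma 3.3]{HH05}. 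Hence $\Omega^1_R(M)\cong M^*$ whenever $M$ has no free summand. Then $\tr_R(M)\subseteq\tr_R(M^*)=\tr_R(\Omega^1_R(M))$ by \Cref{traceprop}(4), which yields both (1) and (4).

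\textbf{Open steps in $(2)\Rightarrow(3)$.} You use the paper's modules $M_i=\im(A_i)$, but two steps are left open.
\begin{itemize}
\item Coordinate projections only put $t^{a_1},t^{a_2},t^{a_n},t^{a_n+i}$ into $\tr_R(M_i)$, and \Cref{freesummandwhen} only excludes $R$. You still need that a trace ideal $I\ni t^{a_1}$ contains $\m$. The paper cites \cite[Proposition 3.18]{lyle2024annihilators} for this. Directly: by minimal multiplicity $t^{a_j-a_1}\in\m:_{Q(R)}\m\subseteq R:_{Q(R)}I$, so $t^{a_j}\in(R:_{Q(R)}I)I=I$.
\item The choice of $i$ must actually be made. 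First, $t^{a_1+1}\notin R$: otherwise $a_2-a_1=1$ is pseudo-Frobenius, forcing $\mathfrak{c}_R=\m$. Take $i=1$ unless $t^{a_2-a_1-1}\in R$. In that case $a_2\equiv 1$ modulo $a_1$ and $a_2\ge 2a_1+1$. Then $t^{a_1+2}\notin R$, and $a_2-a_1-2\equiv -1$ modulo $a_1$ is a gap, so $i=2$ works. This is where $a_1\ge 3$ is used.
\end{itemize}

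\textbf{Justification in $(4)\Rightarrow(2)$.} Minimality of the resolution alone does not rule out free summands of $\Omega^1_R(M)$; for instance $\Omega^1_R(R/xR)\cong R$. Here the conclusion does hold, because $\Omega^1_R(M)$ is Ulrich by \Cref{ulrichex}(1) while $R$ is not. Alternatively, cite \cite[Corollary 1.2.5]{Av10} as the paper does.
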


\begin{proof}
We note $(1) \Rightarrow (2)$ is clear. For $(3) \Rightarrow (1)$ the case where $\mathfrak{c}_R=\m$ follows as every trace ideal contains $\mathfrak{c}_R$ (see e.g. \cite[Corollary 3.6]{DS23}). Suppose on the other hand that $a_1=2$. Then if $M$ is a torsion-free $R$-module, it follows from \cite[Theorem 4.18]{LW12} that $M \cong \bigoplus^m_{j=1} I_j$ where each $I_j$ is a two-generated ideal of $R$. If $\tr_R(M)=\sum^m_{i=1} \tr_R(I_j)=\m$, we must have $\tr_R(I_j)$ contains a minimal reduction of $\m$ for some $j$, and \cite[Proposition 3.18]{lyle2024annihilators} gives that $\tr_R(I_j)=\m$.
But as $I_j$ is two-generated, we have $\Omega^1_R(I_j) \cong I_j^*$ from \cite[Lemma 3.3]{HH05}, and then we have $\m \subseteq \tr_R(I_j^*) \subseteq \tr_R(\Omega^1_R(M))$. As $\tr_R(\Omega^1_R(M)) \ne R$ from \cite[Corollary 1.2.5]{Av10}, it follows from \Cref{traceprop} (5) that $\tr_R(\Omega^1_R(M))=\m$

We now show $(2) \Rightarrow (3)$ via contrapositive; suppose $a_1>2$ and that $\mathfrak{c}_R \ne \m$. Then there exists $0<i<a_1$ with $a_1+i \notin R$. For any such $i$, consider the matrix 
\[A_i:=\begin{pmatrix} t^{a_n} & 0 & t^{a_n+i} \\ 0 & t^{a_1} & t^{a_2} \end{pmatrix}\]
and set $M_i:=\im(A_i)$. Let $F=\im\begin{pmatrix} t^{a_n} & 0\\ 0 & t^{a_1} \end{pmatrix}$.

We claim $t^{a_1} \notin (t^{a_n}:t^{a_n+i})$. Indeed, if $t^{a_1} \in (t^{a_n}:t^{a_n+i})$, then there is a $c \in R$ with $t^{a_n+i+a_1}=ct^{a_n}$. But then $|c|=a_1+i$, which cannot be since $t^{a_1+i} \notin R$. In particular, $(t^{a_n}:t^{a_n+i}) \ne \m$, and it follows from Lemma \ref{colonidealtrace} that $\tr_R(t^{a_n}:t^{a_n+i}) \ne \m$. From Lemma \ref{rankoneoff}, we see that $\Omega^1_R(M_i) \cong (F:M_i)=(t^{a_n}:t^{a_n+i}) \cap (t^{a_1}:t^{a_2})$. But as in the proof of Lemma \ref{freesummandwhen}, $(t^{a_1}:t^{a_2})=\m$ since $a_2-a_1$ is a Pseudo-Frobenius number for $R$ (see \cite[Exercise 2.14]{RG09}), so combining the above, we see that $\Omega^1_R(M_i) \cong (t^{a_n}:t^{a_n+i})$, and in particular that $\tr_R(\Omega^1_R(M_i)) \ne \m$. 

On the other hand, if $f:\Omega^1_R(M_i) \to R$ is given by projection onto the second component, we have $f\begin{pmatrix} 0 \\ t^{a_1} \end{pmatrix}=t^{a_1}$, so $t^{a_1} \in \tr_R(M_i)$, and it follows from \cite[Proposition 3.18]{lyle2024annihilators} that $\m \subseteq \tr_R(M_i)$.

Now if $t^{a_1+1} \in R$, then $a_1+1=a_2$ so $a_2-a_1=1$ is a Psuedo-Frobenius number for $R$. But this means $a_1+i \in R$ for all $i$, contradicting our hypothesis that $\mathfrak{c}_R \ne \m$.

Otherwise, $t^{a_1+1} \notin R$, and we may consider $M_i$ when $i=1$. If $t^{|(a_n+1-a_n)-(a_2-a_1)|}=t^{a_2-a_1-1} \notin R$, then $\tr_R(M_1)=\m$ by \Cref{freesummandwhen}. If $t^{a_2-a_1-1} \in R$ then as $a_2-a_1-1<a_2$, we must have $a_2-a_1-1=ca_1$ for some $1<c$. Then $a_2=(c+1)a_1+1 \ge 2a_1+1$. In particular, $t^{a_1+i} \notin R$ for any $1 \le i<a_1$. As $a_1>2$, we have in particular that $t^{a_1+2} \notin R$, and in considering $M_2$, if $t^{|(a_n+2-a_n)-(a_2-a_1)|}=t^{a_2-a_1-2} \in R$, then as $0<a_2-a_1-2<a_2$, it must be that $a_2-a_1-2=da_1$ for some $d>0$. But $a_2 \equiv 1 \mod a_1$ from above, so $a_2-a_1-2 \equiv -1 \mod a_1$, so $t^{a_2-a_1-2} \notin R$, and it follows from Lemma \ref{freesummandwhen} that $\tr_R(M_2) \ne R$. Thus $\tr_R(M_2)=\m$, so the condition of $(2)$ does not hold. We have therefore shown that conditions $(1)-(3)$ are equivalent, and it remains only to consider their equivalence with $(4)$.

For $(4) \Rightarrow (1)$, if $\tr_R(M)=\m$, then in particular $M$ has no nonzero free summand, and the condition of $(4)$ forces $\m \subseteq \tr_R(\Omega^1_R(M))$. But from \cite[Corollary 1.2.5]{Av10}, $\Omega^1_R(M)$ cannot have a nonzero free summand, and so $\tr_R(\Omega^1_R(M))=\m$.

Finally, we show $(3) \Rightarrow (4)$. If $e(R)=2$ and if $M$ is a torsion-free $R$-module then we appeal again to \cite[Theorem 4.18]{LW12} to get that $M \cong \bigoplus^m_{j=1} I_j$ where each $I_j$ is a two-generated ideal of $R$. In particular, from \cite[Lemma 3.3]{HH05}, we have $\Omega^1_R(M) \cong M^*$ and then $\tr_R(M) \subseteq \tr_R(\Omega^1_R(M))$ from \Cref{traceprop} (4). If instead we have $\mathfrak{c}_R=\m$, then the only nonzero trace ideals in $R$ are $\mathfrak{c}_R$ and $R$ from \Cref{traceprop} (9) and (10). So if $M$ is a torsion-free $R$-module without nonzero summand, then $\tr_R(M)=\mathfrak{c}_R$, and then $\tr_R(M) \subseteq \tr_R(\Omega^1_R(M))$ from \Cref{traceprop} (9), completing the proof.

\end{proof}

Combining the results of this section yields the following key consequence which encompasses Theorem \ref{introthm2}:

\begin{theorem}\label{mainconductortm}
Suppose $R=k[t^{a_1},\dots,t^{a_n}]$ is a numerical semigroup ring. Then the following are equivalent:
\begin{enumerate}
\item[$(1)$] For any torsion-free $R$-module $M$ with no nonzero free summand, we have $\tr_R(M) \subseteq \tr_R(\Omega^1_R(M))$.
\item[$(2)$] For any torsion-free $R$-module $M$ with $\rank(M)=\mu_(M)-1$ and with no nonzero free summand, we have $\tr_R(M) \subseteq \tr_R(\Omega^1_R(M))$.
\item[$(1)$] For any torsion-free $R$-module $M$, if $\tr_R(M)=\m$, then $\tr_R(\Omega^1_R(M))=\m$.
\item[$(2)$] Either $e(R)=2$ or $\mathfrak{c}_R=\m$.
\end{enumerate}
\end{theorem}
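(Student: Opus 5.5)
Everything reduces to \Cref{syztracemimpliesconductorm} once we know that each of the first three conditions forces minimal multiplicity. For the DVR case, every torsion-free module is free, so (1)--(3) hold vacuously, and $e(R)=1$. (We read (4) as ``$e(R)\le 2$ or $\mathfrak{c}_R=\m$'', as in \Cref{introthm2}.) From now on assume $R$ is not a DVR.

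The implications (1)$\Rightarrow$(2) and (1)$\Rightarrow$(3) are immediate. For (1)$\Rightarrow$(3), note that $\tr_R(M)=\m$ means $M$ has no free summand, so (1) gives $\m\subseteq\tr_R(\Omega^1_R(M))$. Since $R$ is a domain and $M$ is torsion-free, $\Omega^1_R(M)$ has no free summand (cf.\ \cite[Corollary 1.2.5]{Av10}), so this inclusion is an equality. The implication (4)$\Rightarrow$(1) is \Cref{syztracemimpliesconductorm} (3)$\Rightarrow$(4), once we check that (4) gives minimal multiplicity. This holds because $\mathfrak{c}_R=\m$ means $R=k[t^{a_1},\dots,t^{2a_1-1}]$, and $e(R)=2$ means $R=k[t^2,t^{2s+1}]$, both of which have embedding dimension equal to multiplicity. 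So it remains to prove (2)$\Rightarrow$(4) and (3)$\Rightarrow$(4).

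For (3)$\Rightarrow$(4), \Cref{Rhasminmult} says that (3), restricted to modules with $\rank(M)=\mu_R(M)-1$, already forces minimal multiplicity. \Cref{syztracemimpliesconductorm} (1)$\Rightarrow$(3) then gives (4).

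For (2)$\Rightarrow$(4), I first show that (2) implies the hypothesis of \Cref{Rhasminmult}. Let $M$ be torsion-free with $\rank(M)=\mu_R(M)-1$ and $\tr_R(M)=\m$. Then $M$ has no free summand, so (2) gives $\m\subseteq\tr_R(\Omega^1_R(M))$, and as before this inclusion is an equality. So \Cref{Rhasminmult} applies and $R$ has minimal multiplicity. Next, (2) implies condition (2) of \Cref{syztracemimpliesconductorm}: those modules have $\mu_R(M)=3$ and $\rank(M)=2$, hence satisfy $\rank(M)=\mu_R(M)-1$, and the same argument applies. That theorem then yields (4).

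The only delicate step is the repeated claim that $\Omega^1_R(M)$ has no nonzero free summand, which is what upgrades the containment $\m\subseteq\tr_R(\Omega^1_R(M))$ to equality. I expect this to be the main point to verify carefully. I would cite \cite[Corollary 1.2.5]{Av10} exactly as the proof of \Cref{syztracemimpliesconductorm} does, which is legitimate because $R$ is a local (or graded) domain and not a field.
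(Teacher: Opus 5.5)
Your proposal is correct and follows the paper's route: \Cref{Rhasminmult} forces minimal multiplicity, and \Cref{syztracemimpliesconductorm} does the rest. You also supply details that the paper's two-line proof leaves implicit, namely the DVR case (which needs the last condition read as $e(R)\le 2$ or $\mathfrak{c}_R=\m$), the check that this condition itself gives minimal multiplicity, and the upgrade of $\m\subseteq\tr_R(\Omega^1_R(M))$ to equality.

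One small correction to your justification. $\Omega^1_R(M)$ has no nonzero free summand because torsion-free modules over this one-dimensional Cohen--Macaulay ring are maximal Cohen--Macaulay. After reducing modulo a nonzerodivisor $y$, a free summand would sit inside $\m$ times a free $R/yR$-module, which the nonzero socle of $R/yR$ annihilates. Being a domain that is not a field is not enough: over $k[\![x,y]\!]$ the maximal ideal is torsion-free, yet its first syzygy is free.
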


\begin{proof}
Note that $(2) \Rightarrow (1)$ is immediate from \Cref{syztracemimpliesconductorm}, while $(1) \Rightarrow (2)$ follows from combining \Cref{syztracemimpliesconductorm} with \Cref{Rhasminmult}.
\end{proof}

\section{{On the trace of the syzygy of the canonical module $\Omega^1_R(\mathfrak{c}_R)$}} \label{conductortrace}

\Cref{mainconductortm} shows that the condition $\tr_R(M)=\m$ will only force $\tr_R(\Omega^1_R(M))=\m$, i.e., to be as large as possible under exceedingly stringent hypotheses. It is natural to wonder about the opposite extreme. That is, we may consider $\tr_R(\Omega^1_R(\mathfrak{c}_R))$ in particular as an invariant of $R$, and then ask when we will have $\tr_R(\Omega^1_R(\mathfrak{c}_R))=\mathfrak{c}_R$. However, at least for numerical semigroup rings it turns out that this can also occur under similarly restrictive hypotheses. To provide this characterization, we study the structure of $\tr_R(\Omega^1_R(I))$, beginning with the following lemma:

\begin{lem}\label{mingenforulrich}
Suppose $R$ is CM with $\dim(R)=1$ and suppose $I$ is an ideal of positive grade in $R$ that is Ulrich as an $R$-module. Suppose $y$ is a minimal reduction for $\m$, and let $s$ be an integer for which $y^s \in I$ but for which $y^{s-1} \notin I$ (such an integer always exists since $I$ is $\m$-primary). Then $y^s$ is part of a minimal generating set for $I$. 
\end{lem}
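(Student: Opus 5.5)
We argue by contradiction: suppose $y^s\in \m I$ and show this forces $y^{s-1}\in I$. The key input is the Ulrich hypothesis. Since $I$ is Ulrich (and $y$ is a minimal reduction of $\m$, after extending the residue field if necessary, which does not affect minimal generation), the characterization of Ulrich modules recalled after \Cref{ulrichdefn} gives $\m I = yI$.

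The steps are as follows.

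First, from $y^s\in \m I = yI$ we may write $y^s = y z$ for some $z\in I$.

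Second, $y$ is a minimal reduction of $\m$ in a one-dimensional CM ring, so it is a parameter and hence a nonzerodivisor. Cancelling $y$ gives $y^{s-1}=z\in I$.

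This contradicts the choice of $s$. Hence $y^s\notin \m I$, so its image in $I/\m I$ is nonzero. By Nakayama's lemma, $y^s$ can therefore be extended to a minimal generating set of $I$.

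Two points need care. First, the existence of $s$: $I$ contains a nonzerodivisor and $\dim R=1$, so $I$ is $\m$-primary. Hence $y^N\in I$ for $N\gg0$, and we take $s$ minimal. If $s=0$ then $I=R$, which is a degenerate case; it is fine since $1$ is a minimal generator, and in any case $s\ge1$ is implicit in the statement. Second, the residue field extension: the passage $R\to R[X]_{\m R[X]}$ is faithfully flat and preserves both $\mu$ and the condition $y^s\notin\m I$. In the graded numerical semigroup setting, $y=t^{a_1}$ already works directly.

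The only real obstacle is ensuring $\m I=yI$ for the given $y$. This holds for any minimal reduction of $\m$, since $e(I)=\mu(I)$ and $e(I)=\ell(I/yI)\ge \ell(I/\m I)=\mu(I)$, with equality forcing $yI=\m I$.
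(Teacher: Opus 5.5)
Your proof is correct and follows the paper's argument exactly. You assume $y^s\in\m I$, use the Ulrich condition $\m I=yI$ to write $y^s=yz$ with $z\in I$, cancel the nonzerodivisor $y$ to get $y^{s-1}\in I$, and conclude by Nakayama. Your extra check that $\m I=yI$ holds for the given minimal reduction $y$, via $\ell(I/yI)=e(I)=\mu(I)=\ell(I/\m I)$, makes the residue-field extension unnecessary; it also justifies a step the paper simply takes from the Ulrich characterization.
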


\begin{proof}
Note that $y$ is a parameter of $R$, and is thus a nonzerodivisor since $R$ is CM. If $t^s \in \m I$, then since $I$ is Ulrich, we have $y^s \in \m I=yI$. So $y^s=ya$ for some $a \in I$, but as $y$ is a nonzerodivisor, this forces $y^{s-1}=a \in I$, which contradicts the choice of $s$. Thus, $y^s \in I-\m I$ and so may be extended to a minimal generating set for $I$.
\end{proof}

\begin{prop}\label{specialideals}
Let $R=k[t^{a_1},\dots,t^{a_n}]$ be a numerical semigroup ring. If for some $s \ge 1$, the ideal $(t^{sa_1},t^{a_2},\dots,t^{a_n})$ is trace, then $(t^{ua_1},t^{a_2},\dots,t^{a_n})$ is trace for all $1 \le u \le s$.
\end{prop}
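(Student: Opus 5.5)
The plan is to use the colon-ideal characterization of trace ideals together with the fact that every ideal in sight is a monomial ideal. Set $J_u:=(t^{ua_1},t^{a_2},\dots,t^{a_n})$ and $N:=(t^{a_2},\dots,t^{a_n})$. Each $J_u$ is $\m$-primary, so it contains a nonzerodivisor. By \Cref{traceprop} (1) and (7), $J_u$ is trace if and only if $(R:_{Q(R)}J_u)J_u\subseteq J_u$.

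Since $J_u$ is homogeneous, $(R:_{Q(R)}J_u)$ is a graded fractional ideal spanned by monomials $t^c$ with $c\in\mathbb{Z}$. Hence it suffices to check that $t^{c+x}\in J_u$ whenever $t^c\in(R:J_u)$ and $t^x$ is a monomial of $J_u$. Because $u\le s$, we have $J_s\subseteq J_u$, and therefore $(R:J_u)\subseteq(R:J_s)$. The hypothesis that $J_s$ is trace then gives $t^cJ_s\subseteq J_s$ for every such $t^c$.

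The key steps, in order, are as follows.

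First, describe the monomials of $J_u$. A monomial $t^x\in J_u$ is either in $N$ or equals $t^{ma_1}$ with $m\ge u$. Also, a monomial $t^{ka_1}$ with $k<u$ lies in $J_u$ if and only if it lies in $N$, and the same holds with $J_s$ in place of $J_u$, since $k<u\le s$. In particular, the only monomials of $R$ possibly missing from $J_u$ but present in $J_s$ are none: every monomial of $J_s$ outside $N$ is $t^{ma_1}$ with $m\ge s\ge u$.

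Second, treat the case $t^x\in N\subseteq J_s$. Then $t^{c+x}\in t^cJ_s\subseteq J_s\subseteq J_u$, and we are done.

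Third, treat the case $t^x=t^{ma_1}$ with $m\ge u$. We know $t^{c+ma_1}\in R$, because $t^c\in(R:J_u)$. Suppose it is not in $J_u$. Then by the first step it is a power $t^{ka_1}$ with $k<u$ that does not lie in $N$. This forces $c=-ra_1$ with $r=m-k\ge1$.

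The main obstacle is ruling out this last situation. I would do so by iterating the stability $t^{-ra_1}J_s\subseteq J_s$. This gives $t^{-nra_1}J_s\subseteq J_s\subseteq R$ for every $n\ge1$. Applying this to a fixed nonzero monomial of $J_s$, for instance $t^{sa_1}$, produces elements of $R$ of arbitrarily negative degree, which is impossible. Hence $t^{c+ma_1}\in J_u$ in this case as well. This proves $(R:J_u)J_u\subseteq J_u$, so $J_u$ is trace for every $1\le u\le s$.

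The point needing care is the first step: I must make sure that homogeneity really reduces membership to a statement about single monomials. This holds because $Q(R)=k(t)$ and all the ideals involved are monomial, so both $(R:J_u)$ and the products are spanned by monomials.
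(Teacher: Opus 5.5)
Your proof is correct and is essentially the paper's argument run directly rather than by contrapositive. The paper takes a witness $t^{j}t^{a}\in\tr_R(J_s)\setminus J_s$, shows the ideal factor $t^a$ must lie in $N$, and transfers the witness to $J_{s+1}$ via $(R:J_s)\subseteq(R:J_{s+1})$; this is your second case with your $u,s$ playing the roles of its $s,s+1$. The only real difference is how the remaining case (colon factor $t^{-ra_1}$, ideal factor a pure power of $t^{a_1}$) is excluded: the paper uses $t^{a_2}\notin(t^{a_1})$, while your iteration $t^{-nra_1}J_s\subseteq J_s\subseteq R$ needs no minimality of the generators, and your direct organization covers all $u\le s$ at once instead of by one-step induction.
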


\begin{proof}
It suffices to show that if $I=(t^{sa_1},t^{a_2},\dots,t^{a_n})$ is not trace, then $J=(t^{(s+1)a_1},t^{a_2},\dots,t^{a_n})$ is not trace. If $t^{sa_1} \in (t^{a_2},\dots,t^{a_n})$, then $I=J$, and there is nothing to prove. So we may suppose this is not the case, and that in particular $t^{sa_1}$ is a minimal generator for $I$. 

If $I$ is not trace, then as $R/I$ is a principal ideal ring with maximal ideal generated by $t^{a_1}$, it follows that there is a minimal generator $t^{va_1} \in \tr_R(I)$ for some $v<s$. By \Cref{traceprop} (7), we have $t^{va_1} \in (R:_{Q(R)}I)I$, and so there are homogeneous minimal generators $t^j$ for $(R:_{Q(R)} I)$ and $t^a$ for $I$ with $t^jt^a=t^{a+j}=t^{va_1}$. If $t^a=t^{sa_1}$, then we have $j=(v-s)a_1$. But by assumption $t^{a_2+j}=t^{a_2+(v-s)a_1} \in R$. Since $v<s$, this would force $t^{a_2}=t^{a_2+(v-s)a_1}t^{(s-v)a_1} \in (t^{a_1})$, which cannot be. Thus we must have $a=a_i$ for some $2 \le i \le n$. But then $t^j \in (R:_{Q(R)} J)$ and we have $t^{va_1} \in (R:_{Q(R)} J)J=\tr_R(J)$, so $J$ is not trace.
\end{proof}

The following example shows that taking trace ideals does not preserve inclusion even amongst ideals of the form consider in \Cref{specialideals}. 

\begin{example}
Let $R=k[t^3,t^7,t^{11}]$ and consider the ideals $I=(t^{12},t^7,t^{11})$ and $J=(t^{15},t^7,t^{11})$. Then $\tr_R(I)=(t^6,t^7,t^8)$ and $\tr_R(J)=\m$. In particular, we note that $J \subseteq I$, but $\tr_R(J) \nsubseteq \tr_R(I)$.
\end{example}

\begin{lem}\label{prelimlem}
Suppose $R=k[t^{a_1},\dots,t^{a_n}]$ is a numerical semigroup ring and let $F$ be the Frobenius number for $R$. Then for any $i \ne j$, we have
\[(t^{F+i}:t^{F+j})=\begin{cases} (t^{F+1}:t^{F+j-i+1}) & \mbox{if } j>i \\ (t^{F+i-j+1)}:t^{F+1)} & \mbox{if } j<i \end{cases}.\] Moreover, we have 
\[I_1(A)=\sum^{a_1}_{j=1} \sum_{i \ne j} (t^{F+i}:t^{F+j})=\sum^{a_1}_{i=2} \tr_R(t^{F+1},t^{F+i}) \subseteq \sum^{a_1}_{i=2} \tr_R(t^{F+1}:t^{F+i})=\sum^{a_1}_{j=2} \sum_{i<j} \tr_R(t^{F+i}:t^{F+j})\]
where $A$ is a minimal homogeneous presentation matrix for $\mathfrak{c}_R$.
\end{lem}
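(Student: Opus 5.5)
The plan is to work with exponents and use the fact that every element of $\mathfrak{c}_R$ of degree $\ge F+1$ behaves like an element of $\bar{R}=k[t]$. Here $\mathfrak{c}_R=(t^{F+1},\dots,t^{F+a_1})$ is the conductor, and these $a_1$ monomials form the minimal homogeneous generating set.

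\textbf{Translation formula.} For homogeneous $t^c\in R$, we have $t^c\in(t^{F+i}:t^{F+j})$ if and only if $c+F+j-(F+i)=c+j-i$ lies in the semigroup $S$ of $R$. This condition depends only on the difference $j-i$.
\begin{itemize}
\item If $j>i$, compare with $(t^{F+1}:t^{F+j-i+1})$: membership means $c+(j-i+1)-1=c+j-i\in S$. This is the same condition.
\item If $j<i$, compare with $(t^{F+i-j+1}:t^{F+1})$: membership means $c+1-(i-j+1)=c+j-i\in S$. Again the same condition.
\end{itemize}
Since both colon ideals are monomial ideals, being the set of $t^c$ with a given property, they coincide. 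One should note that $F+j-i+1$ and $F+i-j+1$ lie in $\{F+2,\dots,F+a_1\}$, so these are colons among the generators.

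\textbf{The chain of equalities.} The first equality, $I_1(A)=\sum_j\sum_{i\ne j}(t^{F+i}:t^{F+j})$, is exactly the first part of \Cref{containedinsumtrace} applied to $I=\mathfrak{c}_R$ with $x_i=t^{F+i}$.

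For the second equality, I will regroup by the difference. By the translation formula, each pair with $j>i$ gives $(t^{F+1}:t^{F+\ell})$ with $\ell=j-i+1\in\{2,\dots,a_1\}$. Each pair with $j<i$ gives $(t^{F+\ell}:t^{F+1})$ with $\ell=i-j+1$. Every $\ell$ occurs: take $(i,j)=(1,\ell)$ or $(\ell,1)$. Hence the double sum equals $\sum_{\ell=2}^{a_1}\big[(t^{F+1}:t^{F+\ell})+(t^{F+\ell}:t^{F+1})\big]$. By \Cref{traceprop} (8), applied to the two-generated ideal $(t^{F+1},t^{F+\ell})$ whose generators are nonzerodivisors, this equals $\sum_{\ell}\tr_R(t^{F+1},t^{F+\ell})$.

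\textbf{The containment.} For the containment I use \Cref{traceprop} (4). We have $(t^{F+1},t^{F+\ell})^*\cong(t^{F+1}:t^{F+\ell})$ by \cite[Lemma 3.3]{HH05}, or equivalently by \cite[Lemmas 2.4.1-2.4.3]{HS06} as in the proof of \Cref{containedinsumtrace}. Therefore $\tr_R(t^{F+1},t^{F+\ell})\subseteq\tr_R((t^{F+1},t^{F+\ell})^*)=\tr_R(t^{F+1}:t^{F+\ell})$.

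\textbf{The final equality.} This again follows from the translation formula. For $i<j$, we have $(t^{F+i}:t^{F+j})=(t^{F+1}:t^{F+j-i+1})$ as ideals, so their traces agree. Every $\ell\in\{2,\dots,a_1\}$ arises as $j-i+1$ with $i<j$, by taking $i=1$, $j=\ell$. Repetitions do not affect a sum of ideals, so the sums agree.

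The only delicate point is the bookkeeping of index ranges in the regrouping; it is routine. Nothing here is a serious obstacle, since the shift-invariance of colons inside the conductor does all the work.
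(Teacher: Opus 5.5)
Your proposal is correct and follows the paper's proof essentially step for step. It uses the same degree-shift observation that $t^c\in(t^{F+i}:t^{F+j})$ if and only if $t^{c+j-i}\in R$, the same regrouping by $\ell=j-i+1$ via \Cref{containedinsumtrace} and \Cref{traceprop}~(8), and the same containment argument through $(t^{F+1},t^{F+\ell})^*\cong(t^{F+1}:t^{F+\ell})$ and \Cref{traceprop}~(4).
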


\begin{proof}
Observe that a homogeneous element $t^a \in R$ is in $(t^{F+i}:t^{F+j})$ if and only if $t^{a+F+j-(F+i)}=t^{a+j-i} \in R$. If $i<j$, then this holds if and only if $t^{a+j-i+1-1} \in R$, that is, if $t^a \in (t^{F+1}:t^{F+j-i+1})$. On the other hand, if $j<i$, then similarly we have $t^a \in (t^{F+i}:t^{F+j})$ if and only if $t^a \in (t^{F+i-j+1}:t^{F+1})$. 

It follows from combining the above with \Cref{traceprop} (8) and \Cref{containedinsumtrace} that 
\[I_1(A)=\sum^{a_1}_{j=1} \sum_{i \ne j} (t^{F+i}:t^{F+j})=\sum^{a_1}_{i=2} ((t^{F+1}:t^{F+i})+(t^{F+i}:t^{F+1}))\]
\[= \sum^{a_1}_{i=2} \tr_R(t^{F+1},t^{F+i}) \subseteq \sum^{a_1}_{i=2} \tr_R(t^{F+1}:t^{F+i})=\sum^{a_1}_{j=2} \sum_{i<j} \tr_R(t^{F+i}:t^{F+j})\]
as claimed.
\end{proof}

The next theorem gives a description of $\tr_R(\Omega^1_R(\mathfrak{c}_R))$ and in particular demonstrates that the containments of \Cref{containedinsumtrace} can be equalities without the reflexivity condition imposed on the two-generated ideals.

\begin{theorem}\label{conductorsyzthm}
Suppose $R=k[t^{a_1},\dots,t^{a_n}]$ is a numerical semigroup ring and let $F$ denote the Frobenius number for $R$. If $A$ is a minimal homogeneous presentation matrix for $\mathfrak{c}_R$, then $\tr_R(\Omega^1_R(\mathfrak{c}_R))=I_1(A)=\sum^{a_1}_{i=2} \tr_R(t^{F+1},t^{F+i})=\sum^{a_1}_{i=2} \tr_R(t^{F+1}:t^{F+i})$. 
\end{theorem}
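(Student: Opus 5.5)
By \Cref{containedinsumtrace} and \Cref{prelimlem} we already have
\[I_1(A)=\sum_{i=2}^{a_1}\tr_R(t^{F+1},t^{F+i})\subseteq \tr_R(\Omega^1_R(\mathfrak{c}_R))\subseteq \sum_{i=2}^{a_1}\tr_R(t^{F+1}:t^{F+i}).\]
The plan is therefore to prove the reverse containment $\sum_{i}\tr_R(t^{F+1}:t^{F+i})\subseteq I_1(A)$. All four expressions are then equal.

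The first step uses the Ulrich structure. Since $\mathfrak{c}_R$ is Ulrich (\Cref{ulrichex} (3)), \Cref{tracesyzylrich} shows that each colon $((t^{F+1},\dots,\widehat{t^{F+j}},\dots,t^{F+a_1}):t^{F+j})$ has the form $(t^{d_ja_1},t^{a_2},\dots,t^{a_n})$. Hence $I_1(A)=(t^{da_1},t^{a_2},\dots,t^{a_n})$ with $d=\min_j d_j$.

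It then suffices to show that each $\tr_R(t^{F+1}:t^{F+i})$ is contained in an ideal of the form $(t^{d'a_1},t^{a_2},\dots,t^{a_n})$ with $d'\ge d$. Equivalently, we must show that no power $t^{va_1}$ with $v<d$ lies in $\tr_R(t^{F+1}:t^{F+i})$. For this, set $J_i:=(t^{F+1}:t^{F+i})\cong (t^{F+1},t^{F+i})^*$. By \Cref{traceprop} (4), $\tr_R(t^{F+1},t^{F+i})\subseteq \tr_R(J_i)$. The key step is to show that $(t^{F+1},t^{F+i})$ is in fact reflexive, or at least that $\tr_R(J_i)$ adds nothing new modulo $(t^{a_2},\dots,t^{a_n})$.

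The main tool I would try is the explicit description of colons from the proof of \Cref{prelimlem}: $t^a\in (t^{F+1}:t^{F+i})$ if and only if $a+i-1\in S$, where $S$ is the semigroup. Thus $J_i=t^{-(i-1)}(R\cap t^{i-1}R)$ up to the shift, that is, $J_i\cong R\cap t^{i-1}R$ as a fractional ideal. I would then compute $\tr_R(J_i)=(R:J_i)J_i$ via \Cref{traceprop} (7) and check when $t^{va_1}=t^{c}t^{b}$ with $t^c\in(R:J_i)$ and $t^b\in J_i$.

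Suppose $t^{va_1}\in\tr_R(J_i)$. Then there are generators $t^b\in J_i$ and $t^c\in(R:J_i)$ with $b+c=va_1$. Multiplying $t^b$ by elements of $\mathfrak{c}_R$, I would try to show that $t^{c}\cdot t^{F+1}$ and related products force $t^{va_1}\cdot t^{F+j}\in (t^{F+\ell}:\ell\neq j)$ for some $j$. This would give $v\ge d_j\ge d$. Concretely, the translation $t^{va_1}t^{F+j}=t^{F+j+va_1}$ must be written as a multiple of some other $t^{F+\ell}$. Since every exponent above $F$ lies in $S$, this reduces to finding $\ell\ne j$ with $j+va_1-\ell\in S$ and $1\le\ell\le a_1$. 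I would produce $\ell$ from the factorization $b+c=va_1$.

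\textbf{Main obstacle.} This last combinatorial step, extracting $\ell$ from the factorization $b+c=va_1$ and controlling the bound $v\ge d$, is where I expect the difficulty. The fallback is to compute $d=\min_j d_j$ explicitly. We have $t^{va_1}\in L_j$ if and only if some $\ell\ne j$ has $va_1+j-\ell\in S$. We would then directly compare this with the smallest $v$ such that $va_1\in S+(-S\text{-dual of }J_i)$. Both conditions should reduce to the same statement about gaps of $S$ in a window of length $a_1$.
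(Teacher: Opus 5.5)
Your setup matches the paper's. By Proposition~\ref{containedinsumtrace}, Lemma~\ref{prelimlem} and Theorem~\ref{tracesyzylrich} (using that $\mathfrak{c}_R$ is Ulrich), everything reduces to the following claim. If $t^{va_1}=t^ct^b$ with $t^b\in J_i=(t^{F+1}:t^{F+i})$ and $t^c\in(R:_{Q(R)}J_i)$, then $t^{va_1}\in L_j:=((t^{F+\ell}:\ell\neq j):t^{F+j})$ for some $j$.

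That claim is the whole content of the theorem, and you explicitly leave it open. Your ``fallback'' of computing $d$ and comparing two gap conditions is a hope, not an argument. Your first idea, reflexivity, cannot work in general: the paper's example $R=k[t^7,t^9,t^{11},t^{13}]$ has $\tr_R(t^{20},t^{21})\neq\tr_R(t^{20}:t^{21})$, so $(t^{F+1},t^{F+2})$ need not be reflexive. A smaller point: $t^a\in J_i$ requires $a\in S$ as well as $a+i-1\in S$. Your fractional-ideal description gets this right, and both conditions are used below.

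The missing idea is that $c\ge 0$. You gesture at using $\mathfrak{c}_R$, but it enters on the dual side. Every $a\ge F+1$ has $a,a+i-1\in S$, so $\mathfrak{c}_R\subseteq J_i$ (cf.\ Proposition~\ref{conductorfact}), hence $t^c\in(R:_{Q(R)}\mathfrak{c}_R)=\bar{R}$. Concretely, if $c<0$ then $t^{F-c}\in J_i$ but $t^ct^{F-c}=t^F\notin R$.

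Now write $c=wa_1+k$ with $w\ge 0$ and $0\le k<a_1$, so $b=(v-w)a_1-k$. Choose $j,\ell$ by cases:
\begin{itemize}
\item If $i>k+1$, take $j=i-k$ and $\ell=1$. Then $va_1+j-\ell=wa_1+(b+i-1)\in S$.
\item If $i=k+1$, take $j=1$ and $\ell=i$. Then $va_1+j-\ell=wa_1+b\in S$.
\item If $i\le k$, take $j=a_1+i-k\in[3,a_1]$ and $\ell=1$. Then $va_1+j-\ell=(w+1)a_1+(b+i-1)\in S$.
\end{itemize}
In every case $t^{va_1}t^{F+j}\in t^{F+\ell}R$ with $\ell\neq j$, so $t^{va_1}\in L_j\subseteq I_1(A)$ by Proposition~\ref{I1colon}. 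This is exactly the paper's three-case argument.

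The nonnegativity of $w$ is what lets you add $wa_1$ to an element of $S$ and stay in $S$. Without it, the factorization $b+c=va_1$ gives no control over the exponent $va_1+j-\ell$. That is why your attempt to extract $\ell$ from $b+c=va_1$ stalls.
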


\begin{proof}

We claim that $\tr_R(t^{F+1}:t^{F+i}) \subseteq \sum^{a_1}_{i=2} ((t^{F+1}:t^{F+i})+(t^{F+i}:t^{F+1}))$ for all $i$, which combined with \Cref{prelimlem} will complete the proof.

We note from \Cref{conductorfact} that $\mathfrak{c}_R=(t^{F+1}:\mathfrak{c}_R)=\bigcap_{i=2}^{a_1} (t^{F+1}:t^{F+i})$. In particular, $\mathfrak{c}_R \subseteq (t^{F+1}:t^{F+i})$ for all $2 \le i \le a_1$. From \Cref{ulrichex} (3) we have that $\mathfrak{c}_R$ is Ulrich, and then \Cref{tracesyzylrich} gives that $t^{a_2},\dots,t^{a_n} \in \sum^{a_1}_{j=2} \tr_R(t^{F+1},t^{F+j})$, so it suffices to show that if $t^{ua_1} \in \tr_R(t^{F+1}:t^{F+j})$ for some $2 \le j \le a_1$, then $t^{ua_1} \in (t^{F+1}:t^{F+i})$ or $(t^{F+i}:t^{F+1})$ for some $i$. We may moreover suppose that $t^{ua_1}$ is a minimal generator for $\tr_R(t^{F+1}:t^{F+j})$.

If $t^{ua_1} \in \tr_R(t^{F+1}:t^{F+j})$, then from \Cref{traceprop} (7), there are homogeneous generators $t^{\ell}$ for $(R:_{Q(R)} (t^{F+1}:t^{F+j}))$ and $t^a$ for $(t^{F+1}:t^{F+j})$ with $t^{a+\ell}=t^{ua_1}$. As $\mathfrak{c}_R \subseteq (t^{F+1}:t^{F+j})$, we have $t^{\ell} \in (R:_{Q(R)} \mathfrak{c}_R)=(R:_{Q(R)} R:_{Q(R)} \bar{R})=\bar{R}$. In particular, $\ell \ge 0$, and we may suppose $\ell>0$. Write $\ell=va_1+k$ where $0 \le k<a_1$, and note $0 \le v<u$. Then $t^a=t^{(u-v)a_1-k} \in (t^{F+1}:t^{F+j})$, so $t^{(u-v)a_1+j-k-1} \in R$. 

We consider several cases:
\begin{enumerate}
\item If $j>k+1$, then $t^{(u-v)a_1} \in (t^{F+1}:t^{F+j-k})$.

\item If $j=k+1$, then $t^{(u-v)a_1-k}=t^{(u-v)a_1-j+1} \in R$ which forces $t^{(u-v)a_1} \in (t^{F+j}:t^{F+1})$.

\item Finally, if $j \le k$, then we note that $a_1+j-k>2$. Indeed, $j \ge 2$ while $k<a_1$, so $j-k>2-a_1$. Then as $t^{(u-v-1)a_1+a_1+j-k-1} \in R$ we have $t^{(u-v-1)a_1} \in (t^{F+1}:t^{F+a_1+j-k})$.
\end{enumerate}

In particular, in every possible case we have $t^{ua_1} \in (t^{F+1}:t^{F+i})+(t^{F+i}:t^{F+1})$ for some $i$, completing the proof.
\end{proof}

We give the following example to note that one cannot, in the context of \Cref{conductorsyzthm}, expect to have $\tr_R(t^{F+1},t^{F+i})=\tr_R(t^{F+1}:t^{F+i})$ in general:

\begin{example}
Let $R=k[t^7,t^9,t^{11},t^{13}]$. The Frobenius number for $R$ is $19$. As in the proof of \Cref{conductorsyzthm}, we have $t^a \in (t^{20}:t^{21})$ if and only if $t^a \in R$ and $t^{a+1} \in R$, while $t^a \in (t^{21}:t^{20})$ if and only if $t^a \in R$ and $t^{a-1} \in R$. We thus see that $(t^{20}:t^{21})=(t^{13},t^{21},t^{23},t^{25})$ and $(t^{21}:t^{20})=(t^{14},t^{22},t^{24},t^{26})$, and in particular, we have $\tr_R(t^{20},t^{21})=(t^{20}:t^{21})+(t^{21}:t^{20})=(t^{13},t^{14})$. But we note that as long as $a \ge 17$, then $16+a \ge 32=13+20$. In particular, as the Frobenius number of $R$ is $19$, it follows that $t^{16} \in (t^{13}:(t^{21},t^{23},t^{25})=(t^{13}:(t^{20}:t^{21}))$. In particular, $t^{16} \in \tr_R(t^{20}:t^{21})$ by \Cref{traceprop} (8), so $\tr_R(t^{20},t^{21}) \ne \tr_R(t^{20}:t^{21})$. 
\end{example}

\begin{theorem}\label{syzofconductorthm}
Suppose $R=k[t^{a_1},\dots,t^{a_n}]$ is a numerical semigroup ring. Then the following are equivalent:
\begin{enumerate}
\item[$(1)$] $\tr_R(\Omega^1_R(\mathfrak{c}_R))=\mathfrak{c}_R$.
\item[$(2)$] $R$ is not a DVR and for some integer $s$, we have $R=k[t^{a_1},t^{sa_1+1},t^{sa_1+2},\dots,t^{sa_1+a_1-1}]$
\end{enumerate}
\end{theorem}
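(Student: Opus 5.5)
The plan is to compute $\tr_R(\Omega^1_R(\mathfrak{c}_R))$ explicitly using \Cref{conductorsyzthm}, which gives
\[\tr_R(\Omega^1_R(\mathfrak{c}_R))=\sum_{i=2}^{a_1}\bigl((t^{F+1}:t^{F+i})+(t^{F+i}:t^{F+1})\bigr).\]
We then compare this with $\mathfrak{c}_R=(t^{F+1},\dots,t^{F+a_1})$. Throughout we use the criterion from the proof of \Cref{prelimlem}: a monomial $t^a\in R$ lies in $(t^{F+1}:t^{F+i})$ if and only if $a+i-1\in S$, and in $(t^{F+i}:t^{F+1})$ if and only if $a-i+1\in S$. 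Here $S$ is the semigroup. We also use \Cref{tracesyzylrich}, which is applicable because $\mathfrak{c}_R$ is Ulrich by \Cref{ulrichex}~(3). It gives $\tr_R(\Omega^1_R(\mathfrak{c}_R))=(t^{da_1},t^{a_2},\dots,t^{a_n})$ for some $d\ge 1$.

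\emph{(1) $\Rightarrow$ (2).} If $R$ is a DVR, then $\mathfrak{c}_R=R$. But $\Omega^1_R(R)=0$, so $\tr_R(\Omega^1_R(\mathfrak{c}_R))=0\neq\mathfrak{c}_R$. Hence $R$ is not a DVR.

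Next, equality forces $t^{a_2},\dots,t^{a_n}\in\mathfrak{c}_R$, so $a_j>F$ for all $j\ge 2$. Consequently every element of $S$ below $F$ is a multiple of $a_1$. Write $F=sa_1+r$ with $0<r<a_1$; the residue $r$ is nonzero since $F\notin S$.

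We claim $r=a_1-1$. Suppose instead $r\le a_1-2$. Take $i=r+2$, which satisfies $2\le i\le a_1$. Then $sa_1+i-1=F+1\in S$, so $t^{sa_1}\in(t^{F+1}:t^{F+i})\subseteq\tr_R(\Omega^1_R(\mathfrak{c}_R))$. Since $sa_1<F+1$, we have $t^{sa_1}\notin\mathfrak{c}_R$. If $s=0$, the same argument puts $1$ in the trace. Either way this contradicts (1).

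So $F=(s'+1)a_1-1$ for some $s'\ge 0$, where we set $s'=s$. Thus $S\cap[0,F]=\{0,a_1,\dots,s'a_1\}$ and $S\supseteq[F+1,\infty)$. Every element of $S$ in $[F+1,F+a_1-1]$ is a non-multiple of $a_1$. Such an element cannot be a sum of two nonzero elements of $S$: any such sum either exceeds $F+a_1$ or has the form $ka_1+(\text{element}>F)$, which is larger still. Therefore these elements must be minimal generators. This yields $R=k[t^{a_1},t^{(s'+1)a_1+1},\dots,t^{(s'+1)a_1+a_1-1}]$, which is the form in (2) with $s=s'+1$.

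\emph{(2) $\Rightarrow$ (1).} Here $F=sa_1-1$ and $s\ge 1$, because $s=0$ would give a DVR. The generators $t^{sa_1+1},\dots,t^{sa_1+a_1-1}$ lie in $\mathfrak{c}_R$. By \Cref{tracesyzylrich} and the principal-ideal structure of $R/(t^{a_2},\dots,t^{a_n})$, it suffices to show two things: $t^{sa_1}\in\tr_R(\Omega^1_R(\mathfrak{c}_R))$, and $t^{(s-1)a_1}\notin\tr_R(\Omega^1_R(\mathfrak{c}_R))$.

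The first holds because $t^{sa_1}=t^{F+1}\in\mathfrak{c}_R$, and \Cref{tracesyzidealcor} or \Cref{traceprop}~(9) gives $\mathfrak{c}_R\subseteq\tr_R(\Omega^1_R(\mathfrak{c}_R))$.

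For the second, apply the membership criterion to $a=(s-1)a_1$ with $2\le i\le a_1$. The number $a+i-1=(s-1)a_1+i-1$ is a non-multiple of $a_1$ lying below $F$, hence a gap. The number $a-i+1$ is a non-multiple of $a_1$ that is either negative or below $F$, hence not in $S$. So $t^{(s-1)a_1}$ lies in none of the summands, and the monomial (homogeneous) structure of the sum shows it is not in the trace.

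I expect the main obstacle to be the generator-counting step in (1) $\Rightarrow$ (2): showing that $S\cap[0,F]$ consists only of multiples of $a_1$ actually forces the listed non-multiples in $[F+1,F+a_1]$ to be exactly the minimal generators. One must also handle carefully the possibility that some $a_j$ coincides with a multiple of $a_1$ in that window, which cannot happen for a minimal generating set.
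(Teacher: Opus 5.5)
Your proposal is correct and follows essentially the paper's route. Both directions rest on \Cref{conductorsyzthm} and \Cref{tracesyzylrich}, and your step forcing $F+1\equiv 0$ modulo $a_1$, by placing a multiple of $a_1$ below $F+1$ into some $(t^{F+1}:t^{F+i})$, is the paper's contradiction argument (the paper uses $t^{(s-1)a_1}\in(t^{F+1}:t^{F+a_1})$). The only differences are that you replace the appeal to \Cref{mingenforulrich} with the elementary observation $a_j>F$ for $j\ge 2$, and that in $(2)\Rightarrow(1)$ you check $t^{(s-1)a_1}$ against each summand of $I_1(A)$ instead of computing $(t^{sa_1}:t^{sa_1+i})=\mathfrak{c}_R$.

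One small slip: since $F+1=(s'+1)a_1$, the non-multiples of $a_1$ in the window are $F+2,\dots,F+a_1$, not the elements of $[F+1,F+a_1-1]$. In fact no minimal-generator counting is needed, because $S=\{0,a_1,\dots,s'a_1\}\cup[F+1,\infty)=\langle a_1,F+2,\dots,F+a_1\rangle$ already gives the ring equality in $(2)$.
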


\begin{proof}
We first show $(2) \Rightarrow (1)$. If $R=k[t^{a_1},t^{sa_1+1},t^{sa_1+2},\dots,t^{sa_1+a_1-1}]$ then the Frobenius number associated to $R$ is $sa_1-1$, so $\mathfrak{c}_R=(t^{sa_1},t^{sa_1+1},\dots,t^{sa_1+a_1-1})$ with this generating set being minimal. We claim $(t^{sa_1}:t^{sa_1+i})=\mathfrak{c}_R$ for each $i$. Indeed, from \Cref{conductorfact}, we have $\mathfrak{c}_R=(t^{sa_1}:\mathfrak{c}_R)=\bigcap^{a_1-1}_{i=1} (t^{sa_1}:t^{sa_1+i})$, so $\mathfrak{c}_R \subseteq (t^{sa_1}:t^{sa_1+i})$. 

Suppose $t^b \in (t^{sa_1}:t^{sa_1+i})$. If $t^b \in (t^{sa_1+1},t^{sa_1+2},\dots,t^{sa_1+a_1-1})$, then $t^b \in \mathfrak{c}_R$ and there is nothing to prove. So we may suppose $b=ua_1$ for some $u$.
Then as $t^b \in (t^{sa_1}:t^{sa_1+i})$, we have $t^{b+i}=t^{ua_1+i} \in R$. If $u<s$, then as $i<a_1$, we have $ua_1+i<sa_1+1$, so $ua_1+i \equiv 0 \mod a_1$. But this cannot be, since $1 \le i \le a_1-1$. It follows that $u \ge s$, so $t^b \in \mathfrak{c}_R$. Thus $(t^{sa_1}:t^{sa_1+i})=\mathfrak{c}_R$ for each $1 \le i \le a_1-1$. But then from \Cref{conductorsyzthm} and from \Cref{traceprop} (1) and (10) we have \[\tr_R(\Omega^1_R(\mathfrak{c}_R))=\sum^{a_1-1}_{i=1} \tr_R(t^{sa_1}:t^{sa_1+i})=\sum^{a_1-1}_{i=1} \tr_R(\mathfrak{c}_R)=\sum^{a_1-1}_{i=1} \mathfrak{c}_R=\mathfrak{c}_R.\]

We now show $(1) \Rightarrow (2)$. Suppose $\tr_R(\mathfrak{c}_R)=\mathfrak{c}_R$. Note $R$ cannot be a DVR since then $\mathfrak{c}_R=R$ and then $\Omega^1_R(\mathfrak{c}_R)$ would be $0$, so $\tr_R(\Omega^1_R(\mathfrak{c}_R))=0 \ne \mathfrak{c}_R=R$. We may thus suppose that $a_1>1$. By \Cref{tracesyzylrich}, there is an $s$ for which $\tr_R(\Omega^1_R(\mathfrak{c}_R))=(t^{sa_1},t^{a_2},\dots,t^{a_n})$, in particular with $t^{(s-1)a_1)} \notin \tr_R(\Omega^1_R(\mathfrak{c}_R))$. So we have $\mathfrak{c}_R=(t^{sa_1},t^{a_2},\dots,t^{a_n})$, which by \Cref{ulrichex} (2) forces $(t^{sa_1},t^{a_2},\dots,t^{a_n})$ to be Ulrich. By \Cref{mingenforulrich}, $t^{sa_1},t^{a_2},\dots,t^{a_n}$ form a minimal generating set for $\mathfrak{c}_R$. In particular, $sa_1=F+i$ for some $1 \le i \le a_1$, where $F$ denotes the Frobenius number associated to $R$. 

Consider first the case where $i>1$.
Then 
\[t^{(s-1)a_1}t^{F+a_1}=t^{(s-1)a_1+F+a_1}=t^{sa_1+F}=t^{2F+i}=t^{F+i-1}t^{F+1}.\]
Thus $t^{(s-1)a_1} \in (t^{F+1}:t^{F+a_1}) \subseteq (\sum_{i=1}^{a_1-1} t^{F+i}:t^{F+a_1})$. But combining \Cref{I1colon} with \Cref{idealsyztrace} gives that $t^{(s-1)a_1} \in \tr_R(\Omega^1_R(\mathfrak{c}_R))=\mathfrak{c}_R$, a contradiction. 

It follows that $i=1$, so that $F=sa_1-1$. Then $a_i=sa_1+i-1$ for all $i \ge 2$, meaning $R=k[t^{a_1},t^{sa_1+1},t^{sa_1+2},\dots,t^{sa_1+a_1-1}]$, as desired. 
\end{proof}

\bibliography{reference}
\bibliographystyle{amsalpha}

\end{document}